\documentclass[10pt]{amsart}


\usepackage{amsmath}
\usepackage{amssymb}
\usepackage{bm}
\usepackage{graphicx}
\usepackage{psfrag}
\usepackage{color}
\usepackage{hyperref}
\hypersetup{colorlinks=true, linkcolor=blue, citecolor=magenta, urlcolor=wine}
\usepackage{url}
\usepackage{algpseudocode}
\usepackage{fancyhdr}
\usepackage{mathtools}
\usepackage{tikz-cd}
\usepackage{xy}
\input xy
\xyoption{all}
\usepackage{stmaryrd}


\voffset=-1.4mm
\oddsidemargin=14pt
\evensidemargin=14pt 
\topmargin=26pt
\headheight=9pt     
\textheight=576pt
\textwidth=441pt 
\parskip=0pt plus 4pt


\pagestyle{fancy}
\fancyhf{}

\fancyhead[CE]{\fontsize{9}{11}\selectfont N.~BAETH AND F.~GOTTI}
\fancyhead[CO]{\fontsize{9}{11}\selectfont UPPER TRIANGULAR MATRICES OVER INFORMATION SEMIALGEBRAS}
\fancyhead[LE,RO]{\thepage}

\setlength{\headheight}{9pt}

\newtheorem*{maintheorem*}{Main Theorem}
\newtheorem{theorem}{Theorem}[section]
\newtheorem*{theorem*}{Main Theorem}

\newtheorem{prop}[theorem]{Proposition}
\newtheorem{conj}[theorem]{Conjecture}
\newtheorem{lemma}[theorem]{Lemma}
\newtheorem{cor}[theorem]{Corollary}

\theoremstyle{definition}
\newtheorem{definition}[theorem]{Definition}
\newtheorem{remark}[theorem]{Remark}
\newtheorem{example}[theorem]{Example}
\numberwithin{equation}{section}

\newcommand{\nn}{\mathbb{N}}
\newcommand{\pp}{\mathbb{P}}
\newcommand{\qq}{\mathbb{Q}}
\newcommand{\rr}{\mathbb{R}}
\newcommand{\zz}{\mathbb{Z}}

\newcommand{\lcm}{\text{lcm}}
\providecommand\ldb{\llbracket}
\providecommand\rdb{\rrbracket}

\newcommand{\supp}{\textsf{supp}}

\keywords{information semialgebras, Puiseux semirings, upper triangular matrices, factorizations, atomicity, ACCP, BF-monoids, FF-monoids}

\subjclass[2010]{Primary: 15A23, 20M13; Secondary: 16Y60, 11Y05}

\thanks{The authors would like to thank Alfred Geroldinger and the Karl Franzens University of Graz for hosting them during the summer semester of 2019, where much of this work was carried out. The second author was partially supported by the NSF award DMS-1903069.}

\begin{document}
	
	\mbox{}
	\title{Factorizations in upper triangular matrices over information semialgebras}
	
	\author{Nicholas R.~Baeth}
	\address{Department of Mathematics\\Franklin and Marshall College\\Lancaster, PA 17604}
	\email{nicholas.baeth@fandm.edu}
	
	\author{Felix Gotti}
	\address{Department of Mathematics\\University of Florida\\Gainesville, FL 32611}
	\email{felixgotti@ufl.edu}
	
	\date{\today}
	
\begin{abstract}
An integral domain (or a commutative cancellative monoid) is atomic if every nonzero nonunit element is the product of irreducibles, and it satisfies the ACCP if every ascending chain of principal ideals eventually stabilizes. The interplay between these two properties has been investigated since the 1970s. An atomic domain (or monoid) satisfies the finite factorization property (FFP) if every element has only finitely many factorizations, and it satisfies the bounded factorization property (BFP) if for each element there is a common bound for the number of atoms in each of its factorizations. These two properties have been systematically studied since being introduced by Anderson, Anderson, and Zafrullah in 1990. Noetherian domains satisfy the BFP, while Dedekind domains satisfy the FFP. It is well known that for commutative cancellative monoids (in particular, integral domains) FFP $\Rightarrow$ BFP $\Rightarrow$ ACCP $\Rightarrow$ atomic. For $n \ge 2$, we show that each of these four properties transfers back and forth between an information semialgebras $S$ (i.e., a commutative cancellative semiring) and their multiplicative monoids $T_n(S)^\bullet$ of $n \times n$ upper triangular matrices over~$S$. We also show that a similar transfer behavior takes place if one replaces $T_n(S)^\bullet$ by the submonoid $U_n(S)$ consisting of unit triangular matrices. As a consequence, we find that the chain FFP $\Rightarrow$ BFP $\Rightarrow$ ACCP $\Rightarrow$ atomic also holds for the classes comprising the noncommutative monoids $T_n(S)^\bullet$ and $U_n(S)$. Finally, we construct various rational information semialgebras to verify that, in general, none of the established implications is reversible.
\end{abstract}

\bigskip

\maketitle

\section{Introduction}
\label{sec:intro}

A factorization of an element in a commutative cancellative monoid is a representation of that element as a formal product of atoms (i.e., irreducible elements). When every nonunit element has such a representation, the monoid is called atomic and, additionally, if such a representation is unique, the monoid is called a unique factorization monoid (or a UFM). The monoid is called a finite factorization monoid (or an FFM) if every nonunit element has only finitely many factorizations, and it is called a bounded factorization monoid (or a BFM) if for each nonunit element there is a common bound for the number of atoms (counting repetitions) in each of its factorizations. In addition, the monoid is called a half-factorial monoid (or an HFM) if any two factorizations of the same nonunit element have the same number of atoms (counting repetitions). These subclassifications of atomic monoids, as well as the ACCP property (every ascending chain of principal ideals stabilizes), have been well studied over the past half century. Atomic monoids, monoids satisfying the ACCP, and HFMs have been systematically studied since the 1970s, while FFMs and BFMs have been studied since they were introduced in 1990~\cite{AAZ90} in the context of integral domains. In that paper it was shown that if $M$ is the multiplicative monoid of an integral domain, then each of the implications shown in Diagram~(\ref{equation:anderson}) holds. Moreover, in the same, examples are given to show that none of the implications are, in general, reversible.

\smallskip

\begin{equation}\label{equation:anderson}
	\begin{tikzcd}
		& \textbf{HFM} \arrow[dr, Rightarrow] & \\
		\textbf{UFM} \arrow[ur, Rightarrow] \arrow[dr, Rightarrow] & &  \textbf{BFM} \arrow[r, Rightarrow] & \textbf{ACCP} \\
		& \textbf{FFM} \arrow[ur, Rightarrow] & 
	\end{tikzcd}
\end{equation}

\smallskip

Factorization theory has been significantly less developed in noncommutative settings, with much of the early work focusing primarily on characterizing when a given monoid is a UFM (see, for instance, \cite[Chapter~3]{pC85} and~\cite{CS03}). However, in recent years there has been more consideration given to factorizations in noncommutative rings and monoids. In particular, many factorization tools from commutative monoids and domains have been used in and adapted to noncommutative algebraic structures, including rings of upper triangular nonnegative matrices~\cite{CZL15}, maximal orders in central simple algebras~\cite{dS13}, noncommutative finite factorization domains~\cite{BHL17}, noncommutative Krull monoids~\cite{aG13}, and small cancellative categories~\cite{BS15}.

In many cases, factorization aspects of noncommutative algebraic structures are conveniently investigated through the lens of transfer homomorphisms to easier-to-understand commutative objects. For example, in~\cite{BBG} the noncommutative monoids $T_n(R)$ of upper triangular matrices over a commutative ring $R$ are studied using transfer homomorphisms to products of commutative cancellative monoids, and in~\cite{BS15} various arithmetic aspects of noncommutative cancellative monoids are studied using transfer homomorphisms to their reduced abelianizations. By contrast, it was proved in~\cite{BS20} that for a reduced information semialgebra $S$, there are no such transfer homomorphisms from the monoid $T_n(S)^\bullet$ of regular elements of $T_n(S)$. Using other approaches, however, it was shown in~\cite{BS20} that $T_n(S)^\bullet$ is atomic, after which some arithmetical invariants were computed.

The present paper can be thought of as a continuation of~\cite{BS20}. However, our primary goal here is to provide a more fundamental set of results on the atomicity of the noncommutative monoid $T_n(S)^\bullet$ as well as its submonoid $U_n(S)$ consisting of unit upper triangular matrices. We characterize when they are FFMs or BFMs, determine when they satisfy the ACCP, and argue that $T_n(S)^\bullet$ is almost never a HFM. To do so, we prove that each of these properties, save half-factoriality, transfers back and forth from the monoids $T_n(S)^\bullet$ and $U_n(S)$ to both the additive and multiplicative monoids $(S,+)$ and $(S^\bullet,\cdot)$. In particular, we give a set of implications analogous to those in Diagram~(\ref{equation:anderson}) but for both $T_n(S)^\bullet$ and $U_n(S)$. By considering Puiseux information semialgebras (i.e., semialgebras contained in the nonnegative cone of rational numbers), we illustrate that, as in the case of commutative monoids, each of the implications is not reversible in general.


This paper is structured as follows.  In Section~\ref{sec:prelim} we introduce the main objects of study and other related definitions and notation. Then in Section~\ref{sec:Puiseux information semialgebras} we introduce the notion of a Puiseux information semialgebra and explore some of their atomic aspects only far enough to use them as our primary source of examples later in Section~\ref{sec:upper triangular matrices}. Our main results are contained in Section~\ref{sec:upper triangular matrices}. We summarize those results here. If $S$ is a reduced information semialgebra and $\blacksquare\in \{\text{FFM, BFM, ACCP, atomic}\}$, then we say that $S$ is bi-$\blacksquare$ provide that both $(S,+)$ and $(S^\bullet,\cdot)$ are $\blacksquare$. Further relevant terminology can be found in Sections 2--4.

\smallskip

\begin{theorem*}
Let $S$ be an information semialgebra. If $n\geq 2$, then $T_n(S)^\bullet$ is never half-factorial. Also, each implication in the following diagram holds. 
\begin{equation*} 
	\begin{tikzcd}
		S \text{ is a \textbf{bi-FFM}} \arrow[r, Rightarrow] \arrow[d, Leftrightarrow] & S \text{ is a \textbf{bi-BFM}} \arrow[r, Rightarrow] \arrow[d, Leftrightarrow] &  S \text{ is \textbf{bi-ACCP}} \arrow[r, Rightarrow] \arrow[d, Leftrightarrow] & S \text{ is \textbf{bi-atomic}} \arrow[d, Leftrightarrow] \\
		T_n(S)^\bullet \text{ is an \textbf{FFM}} \arrow[r, Rightarrow] & T_n(S)^\bullet \text{ is a \textbf{BFM}} \arrow[r, Rightarrow] & T_n(S)^\bullet \text{ is \textbf{ACCP}} \arrow[r, Rightarrow] & T_n(S)^\bullet \text{ is \textbf{atomic}}
	\end{tikzcd}
\end{equation*}
Moreover, none of the horizontal inclusions is, in general, reversible. 
\end{theorem*}

\bigskip

\section{Fundamentals}
\label{sec:prelim}

In this section we introduce the relevant concepts pertaining to commutative monoids and factorization theory required in later sections. For a thorough treatment on commutative monoids, see~\cite{pG01}; for atomic monoids and factorization theory, see~\cite{GH06}.

\medskip

\subsection{General Notation}
We begin by introducing the general notation we shall be using throughout this paper. We let $\nn = \{1,2,\dots\}$ denote the set of positive integers and set $\nn_0 = \nn \cup \{0\}$. In addition, we let $\pp$ denote the set consisting of all prime numbers. For $a,b \in \zz$ we let $\ldb a,b \rdb$ denote the set of integers between $a$ and $b$, i.e., 
\[
	\ldb a,b \rdb := \{z \in \zz : a \le z \le b\}.
\]
In addition, for $X \subseteq \rr$ and $r \in \rr$, we set
\[
	X_{\ge r} := \{x \in X : x \ge r\}
\]
and we use the notations $X_{> r}, X_{\le r}$, and $X_{< r}$ analogously. If $q \in \qq_{> 0}$, then we call the unique $n,d \in \nn$ such that $q = n/d$ and $\gcd(n,d)=1$ the \emph{numerator} and \emph{denominator} of $q$ and denote them by $\mathsf{n}(q)$ and $\mathsf{d}(q)$, respectively. Finally, for $Q \subseteq \qq_{>0}$, we set
\[
	\mathsf{n}(Q) := \{\mathsf{n}(q) : q \in Q\} \quad \text{ and } \quad \mathsf{d}(Q) := \{\mathsf{d}(q) : q \in Q\}.
\]

\medskip

\subsection{Monoids} A monoid is defined to be a semigroup with identity, and we tacitly assume that all monoids here are cancellative, though not necessarily commutative. Let $M$ be a monoid. The set of invertible elements of $M$ is a group, which we denote by $U(M)$. The monoid $M$ is called \emph{reduced} if $|U(M)|=1$. An element $a \in M \! \setminus \! U(M)$ is called an \emph{atom} if whenever $a = uv$ for $u,v \in M$, either $u \in U(M)$ or $v \in U(M)$. The set of atoms of $M$ is denoted by $\mathcal{A}(M)$. The monoid $M$ is called \emph{atomic} if each non-invertible element can be written as a product of atoms, and $M$ is called \emph{antimatter} if $\mathcal{A}(M)$ is empty.

A subset $I$ of $M$ is called a \emph{left ideal} (resp., \emph{right ideal}) provided that $M \, I \subseteq I$ (resp., $I \, M \subseteq I$). A (\emph{two-sided}) \emph{ideal} of $M$ is a subset that is both a left and a right ideal. For each $x \in M$, the left ideal $Mx$ and the right ideal $xM$ are called \emph{principal}. As in ring theory, we say that $M$ satisfies the \emph{ACCP} (resp., \emph{left ACCP} or \emph{right ACCP}) if each ascending chain of principal ideals (resp., left ideals or right ideals) eventually stabilizes. If $M$ satisfies both the left ACCP and the right ACCP, then $M$ is atomic (see~\cite[Proposition~3.1]{dS13}).

If $x \in M$ and $x = a_1 \dots a_\ell$ for some $\ell \in \nn$ and $a_1, \dots, a_\ell \in \mathcal{A}(M)$, then $\ell$ is called a (\emph{factorization}) \emph{length} of $x$. The \emph{set of lengths} of $x \in M \! \setminus \! U(M)$, denoted by $\mathsf{L}(x)$, is the set of all possible lengths of~$x$. In addition, we define $\mathsf{L}(u) := \{0\}$ for all $u \in U(M)$. Clearly, $M$ is atomic if and only if $\mathsf{L}(x) \neq \emptyset$ for all $x \in M$. An atomic monoid $M$ is called a \emph{BFM} (or a \emph{bounded factorization monoid}) provided that $|\mathsf{L}(x)| < \infty$ for all $x \in M$.
\medskip

\subsection{Semirings} A triple $(S,+, \cdot)$, where $S$ is a nonempty set and both $+$ and $\cdot$ are binary operations on $S$ (called \emph{addition} and \emph{multiplication}) is said to be a \emph{semiring} if the following conditions hold:
\begin{enumerate}
	\item[1.] $(S,+)$ is a commutative monoid with identity element denoted by $0$;
	\smallskip
	
	\item[2.] $(S, \cdot)$ is a semigroup with identity element denoted by $1$;
	\smallskip
	
	\item[3.] multiplication distributes over addition;
	\smallskip
	
	\item[4.] $0 \cdot x = 0$ for all $x \in S$.
\end{enumerate}
The semiring $S$ is said to be \emph{commutative} if $(S, \cdot)$ is a commutative semigroup. We let $S^\times$ denote the group of invertible elements of $(S, \cdot)$. As in ring theory, an element $x \in S$ is called a \emph{left zero-divisor} (resp., a \emph{right zero-divisor}) provided that there exists $y \in S \setminus \{0\}$ such that $xy=0$ (resp., $yx=0$). In addition, $x \in S$ is called \emph{regular} if it is neither a left zero-divisor nor a right zero-divisor. We let~$S^\bullet$ denote the multiplicative semigroup consisting of all regular elements of $S$.

For $n \in \nn$ and a semiring $S$, let $T_n(S)$ denote the set consisting of all $n \times n$ upper triangular matrices with entries in $S$. It is clear that $T_n(S)$ is a semiring with respect to the usual addition and multiplication of matrices, where the identity element of the semigroup $(T_n(S), \cdot)$ is $I_n$, the identity matrix. Note that the multiplicative semigroup $T_n(S)^\bullet$ is cancellative and, therefore, a monoid. For $A \in T_n(S)$ and $i,j \in \ldb 1,n \rdb$, we let $A_{ij}= [A]_{ij}$ denote the entry of $A$ in position $(i,j)$, that is, in the row $i$ and column $j$. Provided that $i \le j$, we let $E_{ij}$ denote the matrix of $T_n(S)$ having a $1$ in position $(i,j)$ and $0$'s everywhere else. A matrix $A \in T_n(S)$ is called \emph{unit triangular} if $A_{ii} = 1$ for every $i \in \ldb 1,n \rdb$, and the multiplicative monoid consisting of all unit triangular matrices of $T_n(S)$ is denoted here by $U_n(S)$. The monoids $T_n(S)^\bullet$ and $U_n(S)$ are the primary focus of this paper.

For a semiring $S$, the additive monoid $(S,+)$ may not be reduced and the multiplicative semigroup $(S^\bullet, \cdot)$ may be neither commutative nor cancellative. However, these two properties always hold for the semirings we shall be studying here under the term `information semialgebras'.

\begin{definition} \label{def:information semialgebras}
	We say that a semiring $S$ is an \emph{information semialgebra} if $(S,+)$ is a reduced monoid and $(S \setminus \{0\},\cdot)$ is a commutative monoid.\footnote{The term `information algebra' was used by Kuntzmann in 1972 for reduced semirings without nonzero zero divisors.} 
\end{definition}

Let $S$ be an information semialgebra. It follows immediately from Definition~\ref{def:information semialgebras} that $S$ is a commutative semiring without nonzero zero-divisors, and so $S^\bullet = S \setminus \{0\}$. We call $(S,+)$ and $(S^\bullet, \cdot)$ the \emph{additive monoid} and the \emph{multiplicative monoid} of $S$, respectively, and we say that $S$ is \emph{reduced} if its multiplicative monoid is reduced.

For commutative semirings $S$ and $T$, a map $\phi \colon S \to T$ is called a \emph{homomorphism of semirings} provided that $\phi \colon (S,+) \to (T,+)$ and $\phi|_{S^\bullet} \colon (S^\bullet, \cdot) \to (T^\bullet, \cdot)$ are semigroup homomorphisms satisfying that $\phi(0) = 0$ and $\phi|_{S^\bullet}(1) = 1$. A homomorphism of semirings is an \emph{isomorphism} if it is invertible. For an information semialgebra $S$, we let $\mathcal{A}_+(S)$ and $\mathcal{A}_\times(S)$ denote the set of atoms of $(S,+)$ and $(S^\bullet, \cdot)$, respectively. In~\cite{BS20}, both Proposition~2.9 and Theorem~2.10 deal with atomic information semialgebras whose additive monoids contain exactly one atom. As the next lemma indicates, there is only one such an information semialgebra up to isomorphism.

\smallskip

\begin{lemma}
For an information semialgebra $S$ the following statements are equivalent.
\begin{enumerate}
	\item[(a)] $(S,+)$ is atomic and $|\mathcal{A}_+(S)| = 1$.
	\smallskip
	
	\item[(b)] $(S,+) \cong (\nn_0,+)$, as monoids.
	\smallskip
	
	\item[(c)] $S \cong \nn_0$, as semirings.
\end{enumerate}
\end{lemma}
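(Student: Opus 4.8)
The plan is to prove the cycle of implications $(a) \Rightarrow (b) \Rightarrow (c) \Rightarrow (a)$. The implication $(c) \Rightarrow (b)$ is trivial, and $(b) \Rightarrow (a)$ is immediate since $(\nn_0,+)$ is atomic with its unique atom being $1$. Hence the only real content is the implication $(a) \Rightarrow (c)$, and I would split it as $(a) \Rightarrow (b)$ followed by $(b) \Rightarrow (c)$.

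For $(a) \Rightarrow (b)$: write $\mathcal{A}_+(S) = \{a\}$. Since $(S,+)$ is atomic and reduced, every nonzero $s \in S$ is a sum of copies of $a$, so the map $\nn_0 \to S$ sending $k \mapsto ka$ is a surjective monoid homomorphism. To see it is injective, I would use cancellativity of $(S,+)$ together with the fact that $a$, being an atom in a reduced monoid, is not a proper multiple of itself; more precisely, if $ka = \ell a$ with $k < \ell$, then cancellation gives $0 = (\ell - k)a$ with $\ell - k \geq 1$, contradicting that $(S,+)$ is reduced (so $a \neq 0$ and no positive multiple of $a$ is $0$). Thus $(S,+) \cong (\nn_0,+)$.

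For $(b) \Rightarrow (c)$: identify $(S,+)$ with $(\nn_0,+)$ via the isomorphism of part (b), so that the additive atom is the element we call $1_+$; I need to show this additive isomorphism is compatible with multiplication, i.e., that the multiplicative identity $1 \in S$ coincides with $1_+$. The key point is distributivity: for any $s \in S$, writing $s = k \cdot 1_+$ (a $k$-fold additive sum) gives $1 \cdot s = 1 \cdot (1_+ + \cdots + 1_+) = (1 \cdot 1_+) + \cdots + (1 \cdot 1_+) = k(1 \cdot 1_+)$, and since also $1 \cdot s = s = k \cdot 1_+$, cancellativity of $(S,+)$ forces $1 \cdot 1_+ = 1_+$; but $1 \cdot 1_+ = 1_+$ is automatic. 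Instead, the right move is: apply distributivity to $1_+ \cdot 1_+$. Since $1_+$ is the generator, write $1 = m \cdot 1_+$ for some $m \in \nn_0$ (it cannot be $0$ since $1 \neq 0$ in a semiring with $1 \neq 0$, which holds because $0$ is not regular while $1$ is). Then $1_+ = 1 \cdot 1_+ = (m \cdot 1_+)(1_+) = m(1_+ \cdot 1_+)$ by distributivity; and also $1_+ \cdot 1_+ = n \cdot 1_+$ for some $n \in \nn$, so $1_+ = (mn) \cdot 1_+$, whence $mn = 1$ in $\nn_0$ and thus $m = n = 1$. Therefore $1 = 1_+$, and then for any $k, \ell \in \nn_0$, distributivity gives $(k \cdot 1_+)(\ell \cdot 1_+) = (k\ell)(1_+ \cdot 1_+) = (k\ell) \cdot 1_+$, so the additive isomorphism also respects multiplication and sends $1$ to $1$, hence is a semiring isomorphism $S \cong \nn_0$.

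The main obstacle is the bookkeeping in $(b) \Rightarrow (c)$: one must be careful that the additive isomorphism does not a priori interact with the multiplicative structure, so the argument genuinely needs distributivity plus the semigroup identity axioms to pin down both that $1 = 1_+$ and that products of generators behave correctly; cancellativity of $(S,+)$ is used repeatedly to cancel additive sums. Everything else is routine, and no deep factorization machinery is needed beyond the definitions of atom, reduced, and atomic monoid.
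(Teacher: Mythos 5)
Your proposal is correct and follows essentially the same route as the paper: the only substantive step is showing the multiplicative identity coincides with the additive generator, which you do by expanding $1\cdot 1_+$ via distributivity and comparing coefficients in the free rank-one monoid, exactly parallel to the paper's computation $e = e\cdot e = n^2 m\,a$. The extra details you supply (injectivity of $k\mapsto ka$ from reducedness and cancellativity, and $1_+\cdot 1_+\neq 0$ from the absence of nonzero zero-divisors) are points the paper leaves implicit, and your handling of them is sound.
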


\begin{proof}
(a) $\Rightarrow$ (b): Let $a$ be the only atom in $\mathcal{A}(S)$. As $S$ is an atomic monoid, $S = \langle a \rangle$. Therefore it follows immediately that the assignment $n \mapsto na$ determines an isomorphism from $(\nn_0,+)$ to $(S,+)$.
\smallskip
	
(b) $\Rightarrow$ (c): Because $(S,+) \cong (\nn_0,+)$, there exists $a \in \mathcal{A}_+(S)$ such that $S = \{na : n \in \nn_0\}$, where~$na$ denotes the addition of $n$ copies of $a$ (note that $0 = 0a$). Let $e$ be the multiplicative identity element of $S$. Now write $a \cdot a = ma$ and $e = na$ for some $m, n \in \nn$. Then
\[
	na = e = e \cdot e = (na) \cdot (na) = n^2 (a \cdot a) = n^2m a.
\]
Since $(S,+)$ is free on $\{a\}$, it follows that $n = n^2m$. Then $n = 1$, which implies that $e = a$. As a consequence, $a$ is the multiplicative identity element of $S$, and so the map $\nn_0 \to S$ defined by $n \mapsto na$ is an isomorphism of semirings.
\smallskip

(c) $\Rightarrow$ (a): If $S$ is isomorphic to the information semialgebra $\nn_0$ as semirings, then $(S,+)$ is isomorphic to $(\nn_0,+)$ as monoids, from which (a) clearly follows.
\end{proof}

\smallskip

We say that an information semialgebra $S$ is \emph{bi-atomic} if both monoids $(S,+)$ and $(S^\bullet, \cdot)$ are atomic. In a similar manner, we say that $S$ is a \emph{bi-ACCP monoid} (resp., a \emph{bi-BFM}) if both $(S,+)$ and $(S^\bullet, \cdot)$ are ACCP monoids (resp., BFMs).

\medskip

\subsection{Factorizations} For a set $S$ we let $\mathcal{F}^*(S)$ denote the free monoid on $S$. Let $M$ be a monoid and let $y = (u, a_1 \cdots a_m)$ and $y' = (u', a'_1 \cdots a'_n)$ be elements of $M^\times \times \mathcal{F}^*(\mathcal{A}(M))$ for some $m,n \in \nn_0$, $u,u' \in M^\times$, and $a_1, \dots, a_m, a'_1, \dots, a'_n \in \mathcal{A}(M)$. We write $y \sim y'$ if $m=n$ and either $m = 0$ or there exist $u_2, \dots, u_{m+1} \in H^\times$ satisfying that
\[
	u a_1 = u'a'_1 u_2^{-1} \quad \text{and} \quad a_j = u_j a'_j u_{j+1}^{-1}.
\]
for every $j \in \ldb 2,m \rdb$. It is not hard to verify that $\sim$ defines a congruence relation on $M^\times \times \mathcal{F}^*(\mathcal{A}(M))$ (see \cite[Section 3]{BS15}). The quotient of $M^\times \times \mathcal{F}^*(\mathcal{A}(M))$ by $\sim$ is called the \emph{rigid factorization monoid} of~$M$ and is denoted by $\mathsf{Z}^*(M)$. We denote an element $z = [(u, a_1 \cdots a_n)]_\sim \in \mathsf{Z}^*(M)$ simply by $ua_1 \cdots a_n$ and call $|z| := n$ the \emph{length} of $z$. The homomorphism $\pi \colon \mathsf{Z}^*(M) \to M$ induced by the operation of $M$ is called the \emph{rigid factorization homomorphism}. For each $x \in M$, the set
\[
	\mathsf{Z}^*(x) := \pi^{-1}(\{x\})
\]
is called the \emph{set of rigid factorizations} of $x$. It is clear that $M$ is atomic if and only if $\mathsf{Z}^*(x) \neq \emptyset$ for all $x \in M$. We say that the monoid $M$ is an \emph{FFM} (or a \emph{finite factorization monoid}) provided that $\mathsf{Z}^*(x)$ is a finite nonempty set for all $x \in M$. It follows immediately that each FFM is a BFM.

\smallskip

\begin{remark}
The notion of a rigid factorization is a recent well-behaved noncommutative analog of the notion of a factorization in commutative monoids. However, we point out that rigid factorizations do not generalize the standard definition of factorizations in commutative monoids as rigid factorizations are not equal up to permutation.
\end{remark}

\smallskip

We now recall relevant factorization terminology from the commutative setting. For a set~$P$, we let $\mathcal{F}(P)$ denote the free commutative monoid on~$P$. Each element $x \in \mathcal{F}(P)$ can be written uniquely in the form
\[
	x = \prod_{p \in P} p^{\mathsf{v}_p (x)},
\]
where $\mathsf{v}_p(x) \in \nn_0$ is the $p$-adic valuation of $x$, and we call $|x| = \sum_{p \in P} \mathsf v_p (x)$ the \emph{length} of $x$ in $\mathcal {F}(P)$. Let $M$ be a commutative monoid. The free commutative monoid $\mathcal{F}(\mathcal{A}(M_{ \text{red} }))$ is called the \emph{factorization monoid} of $M$ and is denoted by $\mathsf{Z}(M)$. In addition, the unique monoid homomorphism $\pi \colon \mathsf{Z}(M) \to M_{\text{red}}$ fixing all the atoms in the set $\mathcal{A}(M_{\text{red}})$ is called the \emph{factorization homomorphism} of $M$. For $x \in M$, the set
\[
	\mathsf{Z}(x) = \pi^{-1} (xM^\times)
\]
is called the \emph{set of factorizations} of $x$. It is clear that $\mathsf{L}(x) = \{ |z| : z \in \mathsf{Z}(x) \}$. Also, notice that $1 \in \mathsf{L}(x)$ if and only if $x \in \mathcal{A}(M)$, in which case $\mathsf{L}(x) = \{1\}$. We say that $M$ is a \emph{UFM} (or a \emph{unique factorization monoid}) if $|\mathsf{Z}(x)| = 1$ for all $x \in M$. Finally, observe that $M$ is an FFM (as defined in the noncommutative setting) if and only if $\mathsf{Z}(x)$ is finite for each $x \in M$.

\bigskip

\section{Puiseux Information Semialgebras}
\label{sec:Puiseux information semialgebras}

\subsection{General Facts} Many of the main examples in the next section are constructed using information semialgebras contained in $\mathbb Q_{\geq 0}$, the semialgebra of nonnegative rational numbers.  In this section we introduce information germane to their study. An additive submonoid of the nonnegative cone of rational numbers is called a \emph{Puiseux monoid}. Additive factorizations in these monoids have been widely studied in recent years (see \cite{CGG19, CGG20} and references therein). We now define Puiseux information semialgebras.

\smallskip

\begin{definition}
An information semialgebra $S$ is called a \emph{Puiseux information semialgebra} if $(S,+)$ is isomorphic to a Puiseux monoid.
\end{definition}

\smallskip

We proceed to establish some basic properties of Puiseux information semialgebras.

\smallskip

\begin{prop} \label{prop:basic aspects of PIS}
The following statements hold.
\begin{enumerate}
	\item[1.] For each Puiseux monoid $M$ containing $1$, there is at most one multiplicative operation turning~$M$ into an information semialgebra with identity element~$1$, namely the standard multiplication of rational numbers.
\vspace{2pt}
	\item[2.] Any Puiseux information semialgebra can be embedded into the nonnegative cone of $\qq$.
\vspace{2pt}
	\item[3.] If $S_1$ and $S_2$ are two Puiseux information semialgebras contained in $\qq$, then $S_1 \cong S_2$ implies that $S_1 = S_2$.
\end{enumerate}
\end{prop}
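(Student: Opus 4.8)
Parts~1--3 all rest on the same elementary observation: a Puiseux information semialgebra that contains the rational number~$1$ must contain~$\nn_0$ (its additive monoid contains~$1$ and is closed under addition), and in any semiring with this property the product of a positive integer with an arbitrary element is forced by distributivity to equal the corresponding iterated sum. Since nonzero integers can be divided out in $\qq$, this pins down the entire multiplication. The plan is to establish part~1 first and then deduce parts~2 and~3 from it.

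For part~1, let $M$ be a Puiseux monoid with $1 \in M$ (so $\nn_0 \subseteq M$), and let $\cdot$ be any multiplication making $(M,+,\cdot)$ an information semialgebra with multiplicative identity~$1$. Because $0 \cdot x = 0$, it suffices to evaluate $a \cdot b$ for nonzero $a = p/q$ and $b = r/s$ with $p,q,r,s \in \nn$. For $n \in \nn$ write $nx$ for the $n$-fold sum $x + \cdots + x$; since the addition of $M$ is the restriction of that of $\qq$, this $nx$ equals the ordinary product of the integer $n$ with the rational $x$, so in particular $qa = p$ and $sb = r$ in $M$. Using distributivity (on both sides, which is legitimate since information semialgebras are commutative) together with $1 \cdot x = x$, one checks on the one hand that
\[
	(qa)\cdot(sb) = qs\,(a \cdot b),
\]
and on the other hand that $(qa)\cdot(sb) = p \cdot r = pr$, the last product computed as an integer. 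Hence $qs\,(a \cdot b) = pr$ holds in $M \subseteq \qq$, and dividing by the nonzero integer $qs$ gives $a \cdot b = pr/(qs) = ab$. So $\cdot$ is the usual multiplication of rationals.

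For part~2, let $S$ be a Puiseux information semialgebra and fix a monoid isomorphism $\varphi \colon (S,+) \to M$ with $M$ a Puiseux monoid; set $m_0 := \varphi(1)$, which is a positive rational because $1 \neq 0$ in $S$. Define $\psi \colon S \to \qq_{\ge 0}$ by $\psi(x) := \varphi(x)/m_0$. Then $\psi$ is an injective homomorphism of additive monoids, and its image $\psi(S) = m_0^{-1}M$ is a Puiseux monoid containing $\psi(1) = 1$. Transport the multiplication of $S$ across $\psi$ by setting $\psi(x) \ast \psi(y) := \psi(x \cdot y)$; this makes $(\psi(S), +, \ast)$ an information semialgebra with identity~$1$, so by part~1 the operation $\ast$ is the standard multiplication of rationals. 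Consequently $\psi(S)$ is a sub-semiring of $\qq_{\ge 0}$ and $\psi$ is the desired embedding of semirings. Part~3 follows from the same rigidity: if $S_1, S_2 \subseteq \qq_{\ge 0}$ are Puiseux information semialgebras and $\theta \colon S_1 \to S_2$ is a semiring isomorphism, then $\theta(1) = 1$ together with additivity forces $\theta$ to fix every element of $\nn_0$, and applying $\theta$ to the identity $qa = p$ for a nonzero $a = p/q \in S_1$ gives $q\,\theta(a) = p$ in $\qq$, whence $\theta(a) = p/q = a$. Thus $\theta$ is the identity map and $S_1 = S_2$.

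I do not expect a genuine obstacle in any of this. The only steps that call for care are the bookkeeping identity ``an $n$-fold sum in a Puiseux monoid equals multiplication by $n$ in $\qq$'', which is precisely what lets the argument pass between the monoid operations and rational arithmetic, and, in part~2, the harmless rescaling by $m_0$ needed to move the multiplicative identity of $S$ onto the rational number~$1$ before part~1 can be invoked.
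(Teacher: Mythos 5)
Your proof is correct, and its overall architecture matches the paper's: establish part~1 from distributivity, then obtain parts~2 and~3 by transporting, respectively rigidifying, the multiplication. The differences are in execution, and in each case your version is somewhat more self-contained. In part~1 the paper computes $r * q = \bigl(\tfrac{r}{\mathsf{d}(q)} * \mathsf{d}(q)\bigr) * q = \tfrac{r}{\mathsf{d}(q)} * \mathsf{n}(q)$, an associativity manipulation in which $\tfrac{r}{\mathsf{d}(q)}$ appears as an operand of $*$ even though it need not lie in $M$; your symmetric clearing of denominators, $qs\,(a\cdot b) = (qa)\cdot(sb) = p\cdot r = pr$, keeps every operand inside $M$ and reaches the same conclusion. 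In part~2 your rescaling by $m_0 = \varphi(1)$ makes explicit the normalization needed before part~1 can be invoked (the paper passes over the fact that the transported multiplicative identity must first be moved onto the rational number~$1$). In part~3 the paper appeals to \cite[Proposition~3.2]{fG18} for the fact that an isomorphism of Puiseux monoids is multiplication by a positive rational, whereas you rederive exactly the rigidity you need from $\theta(1)=1$ and additivity alone; this removes the dependence on that external result at essentially no extra cost.
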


\begin{proof}
1. Let $M$ be a Puiseux monoid containing $1$, and suppose that $(M,+,*)$ is an information semialgebra with identity element $1$, where $+$ denotes the standard addition. Since $*$ distributes over~$+$, it follows immediately that $n * q = (\, \underbrace{1+ \dots + 1}_{n \text{ times}} \, ) * q = \underbrace{q + \dots + q}_{n \text{ times}} = nq$ for all $n \in \nn$ and $q \in M$. Now for all $q,r \in M_{> 0}$,
	\[
		r * q = \bigg( \frac{r}{\mathsf{d}(q)} * \mathsf{d}(q) \bigg) * q = \frac{r}{\mathsf{d}(q)} * (\mathsf{d}(q) * q) =  \frac{r}{\mathsf{d}(q)} * \mathsf{n}(q) = r q.
	\]
As a result, the operation $*$ is, indeed, the standard multiplication of rationals. Hence only the standard multiplication will turn $M$ into an information semialgebra with identity element~$1$.
\smallskip
	
2. Let $S$ be a Puiseux information semialgebra, and let $\phi \colon (S,+) \to M$ be a monoid isomorphism, where $M$ is a Puiseux monoid. After pushing the multiplication of $S$ to $M$ via $\phi$, the Puiseux monoid~$M$ turns into a Puiseux information semialgebra. It follows from the previous part that the multiplication $M$ inherits from $S$ must be the standard multiplication of rationals. So $M$ becomes a subsemiring of $\qq$. Since $M$ is a Puiseux monoid containing $1$, it is not a group and, therefore, $M \subseteq \qq_{\ge 0}$. Thus, we have embedded $S$ into the Puiseux information semialgebra $\qq_{\ge 0}$.
\smallskip
	
3. Let $\phi \colon S_1 \to S_2$ be a semiring isomorphism. In particular, $\phi$ is an isomorphism of Puiseux monoids. Then \cite[Proposition~3.2]{fG18} guarantees the existence of $q \in \qq_{> 0}$ such that $\phi(x) = qx$ for all $x \in S_1$. As $1 = \phi(1) = q$, one finds that $\phi$ is the identity map. Hence $S_1 = S_2$.
\end{proof}

\smallskip

\begin{remark}
	In virtue of Proposition~\ref{prop:basic aspects of PIS}, one can always think of a Puiseux information semialgebra as a subsemiring of $\qq_{\ge 0}$, and we shall do so from now on without explicit mention.
\end{remark}

We are primarily interested in bi-atomic information semialgebras. It is clear that $\nn_0$ is a bi-atomic Puiseux information semialgebra but, in general, Puiseux information semialgebras need not be bi-atomic. The following examples illustrate this observation.

\smallskip

\begin{example} \label{ex:conductive semiring}
Consider the Puiseux information semialgebra $S = \{0\} \cup \qq_{\ge 1}$. It follows from~\cite[Theorem~3.10]{fG17} and~\cite[Example~4.2]{GGT19} that $(S,+)$ is an atomic monoid with set of atoms $\qq \cap [1,2)$. To check that $(S^\bullet, \cdot)$ is antimatter, it suffices to observe that for any $q \in \qq_{> 1}$ one can take $n \in \nn$ large enough so that $q \cdot \frac{n}{n+1} > 1$ and then write $q = \big(q \frac{n}{n+1}\big) \big(\frac{n+1}{n}\big)$. Thus, $S$ is a Puiseux information semialgebra satisfying that $(S,+)$ is atomic while $(S^\bullet, \cdot)$ is antimatter.
\end{example}

\smallskip

\begin{example}
The Puiseux monoid $S = \langle 1/2^n : n \in \nn_0 \rangle$ is clearly closed under multiplication and, therefore, $S$ is a Puiseux information semialgebra. Since $1/2^n = 2(1/2^{n+1})$ for every $n \in \nn_0$, the monoid $(S,+)$ is antimatter. On the other hand, we shall see in Proposition~\ref{prop:atomicity of cyclic PIS} that $(S^\bullet, \cdot)$ is atomic. Hence $S$ is a Puiseux information semialgebra satisfying that $(S,+)$ is antimatter and $(S^\bullet, \cdot)$ is atomic.
\end{example}

\smallskip

Notice that the bi-atomic Puiseux information semialgebra $\nn_0$ satisfies that $|\mathcal{A}_+(\nn_0)| = 1$ while $|\mathcal{A}_\times(\nn_0)| = \infty$. This observation can be generalized as follows.

\smallskip

\begin{prop} \label{prop:size of sets of atoms for bi-atomic PIS}
If $S$ is a bi-atomic Puiseux information semialgebra, then $|\mathcal{A}_+(S)| \in \{1, \infty\}$ and $|\mathcal{A}_\times(S)| = \infty$.
\end{prop}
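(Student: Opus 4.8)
The plan is to prove the two assertions separately. For the claim that $|\mathcal{A}_\times(S)| = \infty$: suppose for contradiction that $(S^\bullet,\cdot)$ has only finitely many atoms, say $p_1,\dots,p_k$, so that $S^\bullet$ is the multiplicative monoid generated by $p_1,\dots,p_k$. Since $S$ is a Puiseux information semialgebra, we may regard $S \subseteq \qq_{\ge 0}$ with $1 \in S$, and each $p_i$ is a positive rational. The subgroup of $(\qq_{>0},\cdot)$ generated by $p_1,\dots,p_k$ is finitely generated abelian and torsion-free, hence free of some finite rank $r$; in particular only finitely many primes divide the numerators and denominators of the $p_i$. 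Now I would derive a contradiction from atomicity of $(S,+)$: consider an atom $a \in \mathcal{A}_+(S)$ and the element $a \cdot a \in S^\bullet$, or more directly exploit that $S$ is closed under addition — for instance, $1 + p_i \in S^\bullet$ for each $i$, and factoring $1+p_i$ multiplicatively into the $p_j$'s forces strong divisibility constraints on the primes appearing. The cleanest route: pick a prime $\ell \in \pp$ not dividing any numerator or denominator of $p_1,\dots,p_k$ (possible since only finitely many primes are involved); then no element of $\langle p_1,\dots,p_k\rangle$ can have $\ell$ in its numerator or denominator in lowest terms. But $(S,+)$ being atomic and a Puiseux monoid forces $S$ to contain elements with arbitrarily large denominators (else $(S,+)$ would be a finitely generated numerical-type monoid times a scalar, and one checks directly such a monoid cannot be multiplicatively finitely generated unless $S = \nn_0$ up to scaling, where $\mathcal{A}_\times$ is infinite anyway). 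Producing an element of $S$ whose denominator is divisible by such an $\ell$ then contradicts closure under multiplication. The honest version of this argument will require a short lemma pinning down which Puiseux monoids are closed under multiplication and multiplicatively finitely generated; this is the main obstacle and where I expect the real work to be.

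For the claim that $|\mathcal{A}_+(S)| \in \{1,\infty\}$: suppose $(S,+)$ is atomic with $\mathcal{A}_+(S) = \{a_1,\dots,a_m\}$ finite and $m \ge 1$. If $m = 1$ we are done, so assume $m \ge 2$; I will derive a contradiction. Since $(S,+)$ is atomic, $S^\bullet = S\setminus\{0\}$ equals the additive submonoid generated by $a_1,\dots,a_m$, so $(S,+)$ is a finitely generated Puiseux monoid; by a standard fact (finitely generated submonoids of $\qq_{\ge 0}$) it is isomorphic, via scaling by a suitable positive rational, to a numerical monoid $N \subseteq \nn_0$ with $\gcd$ of generators equal to $1$. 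Now bring in multiplicativity: $S$ contains $1$ and is closed under multiplication. After scaling, the multiplicative structure translates to a binary operation on $N$ that still distributes over addition and has a multiplicative identity; but by Proposition~\ref{prop:basic aspects of PIS}(1), once we renormalize so that $1 \in S$, the only admissible multiplication is the usual one. So $S$ is a numerical monoid (a submonoid of $\nn_0$ containing $1$) that is closed under ordinary multiplication — forcing $S = \nn_0$, since any numerical monoid containing $1$ is all of $\nn_0$. But $\mathcal{A}_+(\nn_0) = \{1\}$, contradicting $m \ge 2$. Hence no atomic Puiseux information semialgebra has between $2$ and finitely many additive atoms.

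I would organize the write-up as: (i) reduce to the case $S \subseteq \qq_{\ge 0}$ with $1 \in S$ using the Remark after Proposition~\ref{prop:basic aspects of PIS}; (ii) handle $|\mathcal{A}_+(S)| \in \{1,\infty\}$ via the numerical-monoid argument above, which is short once the normalization is in place; (iii) handle $|\mathcal{A}_\times(S)| = \infty$ via the prime-avoidance argument, splitting into the case $(S,+) \cong \nn_0$ (where $\mathcal{A}_\times$ is visibly infinite, e.g. all rational primes are multiplicative atoms of $\nn_0$) and the case where $(S,+)$ has denominators growing without bound (where a prime $\ell$ avoiding the finitely many $p_i$ yields a contradiction with multiplicative closure). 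The main obstacle, as noted, is the careful treatment of multiplicatively finitely generated Puiseux monoids in step (iii): I expect to need to show that if $S \subseteq \qq_{\ge 0}$ is multiplicatively closed, contains $1$, and $S^\bullet$ is finitely generated as a multiplicative monoid, then the set of primes dividing numerators/denominators of elements of $S^\bullet$ is finite, and then to locate inside $S$ an element forcing a new prime — the existence of which follows from atomicity of $(S,+)$ whenever $S \ne \nn_0$.
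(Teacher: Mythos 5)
There are genuine gaps in both halves, and in each case the missing step is exactly where the paper's proof does its work. For the additive statement, your strategy (finitely many additive atoms $\Rightarrow$ $(S,+)$ finitely generated $\Rightarrow$ $S = \nn_0$) matches the paper's, but the crux is showing that a finitely generated additive submonoid of $\qq_{\ge 0}$ that contains $1$ and is closed under multiplication lies inside $\nn_0$, and your justification does not deliver this. The scaling $x \mapsto cx$ that identifies $(S,+)$ with a numerical monoid $N$ is not a semiring map: the transferred multiplication $x * y = xy/c$ on $N$ has identity $c$, not $1$, so part~1 of Proposition~\ref{prop:basic aspects of PIS} does not apply to $(N,+,*)$; and an additive isomorphism $S \cong N$ does not let you conclude that $S$ itself is a submonoid of $\nn_0$ (e.g.\ $\langle 1, 1/2\rangle$ is additively isomorphic to $\nn_0$ but is not contained in it). The paper closes this by a direct denominator count: if $(S,+) = \langle q_1,\dots,q_n\rangle$ then every denominator in $S$ divides $m = \lcm(\mathsf{d}(q_1),\dots,\mathsf{d}(q_n))$, while $q_i^m \in S$ has denominator $\mathsf{d}(q_i)^m$, forcing $\mathsf{d}(q_i)^m \le m$ and hence $\mathsf{d}(q_i)=1$.

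For the multiplicative statement, your ``cleanest route'' fails at its last step: atomicity of $(S,+)$ with unbounded denominators does \emph{not} produce an element of $S$ whose denominator (or numerator) involves your avoided prime $\ell$ --- in $S_{2/3}$, say, all denominators are powers of $3$, so the denominators grow without bound while never meeting a new prime. The observation that rescues the argument, and which is what the paper uses, is much simpler: since $1 \in S$ and $S$ is additively closed, $\nn \subseteq S^\bullet$, so \emph{every} prime $\ell$ already lies in $S^\bullet$; choosing $\ell$ coprime to all $\mathsf{n}(a_i)$ and writing $\ell = a_1^{n_1}\cdots a_k^{n_k}$ gives $\ell \mid \mathsf{n}(a_1)^{n_1}\cdots\mathsf{n}(a_k)^{n_k}$, a contradiction. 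You also silently assume $S^\bullet$ is generated by its atoms, which presupposes $(S^\bullet,\cdot)$ is reduced; the paper must (and does) separately dispose of the case where $(S^\bullet,\cdot)$ is a group (then $S = \qq_{\ge 0}$, contradicting additive atomicity) and the case where it has a nontrivial unit $u$ (then $a_1 S^\times$ is an infinite family of atoms). As written, your proposal flags the real work as an unproved ``short lemma,'' and that lemma, in the form you state it, is not the right one.
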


\begin{proof}
To prove the first statement, assume that $|\mathcal{A}_+(S)| < \infty$. Take $n \in \nn$ and $q_1, \dots, q_n \in \qq_{> 0}$ such that $(S,+) = \langle q_1, \dots, q_n \rangle$. It is clear that any element in $\mathsf{d}(S^\bullet)$ must divide $m = \lcm(\mathsf{d}(q_1), \dots, \mathsf{d}(q_n))$. For every $i \in \ldb 1,n \rdb$ the fact that $q_i^m \in S^\bullet$ ensures that $\mathsf{d}(q_i)^m = \mathsf{d}(q_i^m) \in \mathsf{d}(S^\bullet)$. As a consequence, $\mathsf{d}(q_i)^m \le m$ for every $i \in \ldb 1,n \rdb$, which implies that $\mathsf{d}(q_1) = \dots = \mathsf{d}(q_n) = 1$. Thus, $S \subseteq \nn_0$. As $\nn_0$ is contained in every Puiseux information semialgebra, $S = \nn_0$ and so $|\mathcal{A}_+(S)| = 1$. Hence $|\mathcal{A}_+(S)| \in \{1, \infty\}$.
\smallskip

To argue the second statement let us assume, by way of contradiction, that $|\mathcal{A}_\times(S)| < \infty$. Then we now consider the following two cases.
\smallskip

\noindent {\it Case 1}: $(S^\bullet, \cdot)$ is a group. Because $\nn$ is contained in $S^\bullet$, it follows that $1/n \in S^\bullet$ for every $n \in \nn$. This implies that $S^\bullet = \qq_{> 0}$. However, in this case $S = \qq_{\ge 0}$, which contradicts the fact that $(S,+)$ is an atomic monoid. 
\smallskip

\noindent {\it Case 2}: $(S^\bullet, \cdot)$ is not a group. Since $(S^\bullet, \cdot)$ is an atomic monoid that is not a group, $\mathcal{A}_\times(S) \ne \emptyset$. Then there exists $k \in \nn$ and $a_1, \ldots, a_k\in S$ such that $\mathcal{A}_\times(S) = \{a_1, \dots, a_k\}$. Suppose first that $(S^\bullet, \cdot)$ is not a reduced monoid. Taking $u \in S^\times \setminus \{1\}$, one can see that $s^n \in S^\times$ for every $n \in \nn$ and, therefore, $S^\times$ is an infinite set. As a result, $a_1S^\times$ is an infinite set of multiplicative atoms, contradicting that $|\mathcal{A}_\times(S)| < \infty$. Now suppose that $(S^\bullet, \cdot)$ is a reduced monoid.
Since $\pp \subseteq S^\bullet$, there exists a prime $p$ in $S^\bullet$ such that $p \nmid \mathsf{n}(a_i)$ for any $i \in \ldb 1,k \rdb$. As $(S^\bullet, \cdot)$ is an atomic monoid, there are $n_1, \dots, n_k \in \nn_0$ such that $p = a_1^{n_1} \cdots a_k^{n_k}$. Then $p \mid \mathsf{n}(a_1)^{n_1} \cdots \mathsf{n}(a_k)^{n_k}$, contradicting the fact that $p$ is prime. Hence $|\mathcal{A}_\times(S)| = \infty$, which completes the proof.
\end{proof}

\smallskip

\begin{remark}
	To prove that $|\mathcal{A}_+(S)| \in \{1,\infty\}$ in Proposition~\ref{prop:size of sets of atoms for bi-atomic PIS} we did not appeal to the atomicity of the multiplicative monoid $(S^\bullet, \cdot)$.
\end{remark}

We proceed to study two classes consisting mostly of bi-atomic Puiseux information semialgebras having both infinitely many additive atoms and infinitely many multiplicative atoms. 

\medskip

\subsection{Cyclic Puiseux Information Semialgebras} In this section we restrict our attention to Puiseux information semialgebras that can be generated by a single element; we call them cyclic Puiseux information semialgebras.

\smallskip

\begin{definition}
For each $r \in \qq_{> 0}$, we call the Puiseux information semialgebra $S_r := \langle r^n : n \in \nn_0 \rangle$ the \emph{cyclic} (Puiseux) information semialgebra \emph{generated} by $r$.
\end{definition}

\smallskip

Let us now show that for almost all $r \in \qq_{> 0}$, the information semialgebra $S_r$ is, indeed, a reduced information semialgebra.

\smallskip

\begin{lemma} \label{lem:reducibility of S_r}
For $r \in \qq_{> 0}$ the Puiseux information semialgebra $S_r$ is reduced if and only if $r \neq 1/n$ for every $n \in \nn_{\ge 2}$.
\end{lemma}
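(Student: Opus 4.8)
The plan is to determine precisely when $S_r = \langle r^n : n \in \nn_0 \rangle$ fails to be reduced, i.e., when its multiplicative monoid $(S_r^\bullet, \cdot)$ contains a unit other than $1$. Since $S_r \subseteq \qq_{\ge 0}$, the only candidate units in $(S_r^\bullet,\cdot)$ are positive rationals $u$ with $u^{-1} \in S_r$ as well. So the whole problem reduces to understanding the additive structure of $S_r$: recall that $(S_r,+)$ is generated as a Puiseux monoid by the powers $r^n$ for $n \in \nn_0$, so every element of $S_r^\bullet$ is a finite $\nn_0$-linear combination $\sum_{n \ge 0} c_n r^n$ with $c_n \in \nn_0$, not all zero. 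I would write $r = a/b$ with $\gcd(a,b)=1$ and split into cases according to whether $r > 1$, $r = 1$, or $r < 1$.

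First I would handle the easy directions. If $r = 1$, then $S_1 = \nn_0$, which is reduced, and indeed $1 \notin \{1/n : n \ge 2\}$, so the equivalence holds vacuously on that side. If $r > 1$, then $r = a/b$ with $a > b \ge 1$; I claim $S_r$ is reduced. Indeed every nonzero element of $S_r$ is a nonnegative combination of the $r^n$, hence is $\ge 1$ (taking the $n=0$ term or any higher term, each $r^n \ge 1$), and if such an element were a unit its inverse would also be $\ge 1$, forcing both to equal $1$; but the only way to get $1$ from a nonnegative combination of $\{1, r, r^2, \dots\}$ with $r > 1$ is the combination $1 \cdot r^0$, so the unit group is trivial. (One must be slightly careful: strictly, an element of $S_r$ equal to $1$ could a priori be $\sum c_n r^n = 1$ with some $c_n > 0$ for $n \ge 1$, but then the sum is $> 1$, contradiction; and $c_0 \ge 2$ also overshoots; so the element $1$ has the unique representation $r^0$.) This argument also covers $r < 1$ provided $r$ is not of the form $1/n$: write $r = a/b$ with $1 \le a < b$ and $\gcd(a,b) = 1$, and observe $r^{-1} = b/a$; for $b/a$ to lie in $S_r$ we would need $b/a = \sum c_n (a/b)^n$, and clearing denominators shows $a$ must divide suitable powers, which combined with $\gcd(a,b)=1$ forces $a = 1$. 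So if $a \ge 2$ then $r^{-1} \notin S_r$, and since $r$ is the "largest" proper building block (again every element is $\ge$ the smallest among its contributing $r^n$), one shows no nontrivial unit exists — here the needed fact is that $S_r \cap \qq_{>0}$ has the property that $x \in S_r^\times$ forces $x = 1$, which I would extract from a direct divisibility/valuation argument at the prime divisors of $a$.

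Finally, for the converse, suppose $r = 1/n$ with $n \ge 2$. Then $r^{-1} = n \in \nn \subseteq S_r$ (since $n = n \cdot r^0$), and of course $r = 1/n \in S_r$, so $n$ is a unit of $(S_r^\bullet, \cdot)$ with $n \neq 1$; hence $S_r$ is not reduced. This direction is immediate and is really the content of the "only if" part. Assembling the three cases gives the stated equivalence. The main obstacle I anticipate is the $r < 1$, $r \neq 1/n$ case: proving that $S_r$ has trivial unit group requires ruling out *all* nontrivial units, not just $r^{-1}$, and the cleanest route is a valuation argument — if $u \in S_r^\times$ with $u = p/q$ in lowest terms, then both $u$ and $u^{-1}$ are nonnegative combinations of powers of $a/b$; examining the $p$-adic (or $a$-adic) valuations of both sides, using $\gcd(a,b)=1$ and $a \ge 2$, forces a contradiction unless $u = 1$. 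I would present that as a short lemma-style computation rather than spelling out every valuation inequality.
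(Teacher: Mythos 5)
Your proposal is correct and takes essentially the same approach as the paper: the converse is immediate from $n,1/n\in S_r$, and the forward direction in the only nontrivial case ($\mathsf{n}(r),\mathsf{d}(r)\ge 2$) is exactly the divisibility/valuation argument you defer at the end --- since denominators of elements of $S_r$ involve only primes dividing $\mathsf{d}(r)$, a unit $u=\sum_i c_i r^i$ with $c_0=0$ would force a prime factor of $\mathsf{n}(r)$ into the denominator of $u^{-1}$, so $c_0\ge 1$, hence $u\ge 1$, and applying the same to $u^{-1}$ (or normalizing $u\le 1$, as the paper does) gives $u=1$. The only difference is cosmetic: your three-way case split on $r>1$, $r=1$, $r<1$ versus the paper's single computation after disposing of $\mathsf{d}(r)=1$.
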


\begin{proof}
The direct implication is an immediate consequence of the inclusion $\nn_0 \subseteq S$. For the reverse implication, suppose that $\mathsf{n}(r) > 1$ and $\mathsf{d}(r) > 1$ (note that if $\mathsf{d}(r) = 1$, then $S = \nn_0$ is clearly reduced). Take $u$ to be a multiplicative unit of $S_r^\bullet$. After replacing $u$ by $u^{-1}$ if necessary, we may assume that $u \le 1$. Write $u = \sum_{i=0}^n c_i r^i$ for $n, c_0, \dots, c_n \in \nn_0$ and $c_n \neq 0$. Since
	\[
		\frac{\mathsf{d}(r)^n}{\sum_{i=0}^n c_i \mathsf{n}(r)^{i} \mathsf{d}(r)^{n-i}} =  \bigg( \frac{1}{\mathsf{d}(r)^n} \sum_{i=0}^n c_i \mathsf{n}(r)^{i} \mathsf{d}(r)^{n-i} \bigg)^{-1} = u^{-1} \in S_r^\bullet,
	\]
	$\sum_{i=0}^n c_i \mathsf{n}(r)^{i} \mathsf{d}(r)^{n-i}$ must divide some power of $\mathsf{d}(r)$. This, along with the fact that $\gcd(\mathsf{n}(r), \mathsf{d}(r)) = 1$, enforces $c_0 \neq 0$, which implies that $u \ge 1$. Hence $u = 1$, and so $S_r$ is reduced.
\end{proof}

\smallskip 

For an atomic monoid $M$ and $x \in M$, we let $A_M(x)$ denote the set of all the atoms of $M$ dividing~$x$ and we let $D_M(x)$ denote the set of all elements of $M$ dividing $x$. The following lemma will be used in the proof of Proposition~\ref{prop:atomicity of cyclic PIS} and later in Section~\ref{sec:upper triangular matrices}.

\smallskip


\begin{lemma}(cf. \cite[Theorem~5.1]{AAZ90}) \label{lem:A(x) is finite iff Z(x) is finite}
	Let $M$ be a reduced and atomic commutative monoid, and take $x \in M$. Then the following statements are equivalent.
	\begin{enumerate}
		\item[(a)] $D_M(x)$ is a finite set.
		\smallskip
		
		\item[(b)] $A_M(x)$ is a finite set.
		\smallskip
		
		\item[(c)] $\mathsf{Z}_M(x)$ is a finite set.
	\end{enumerate}
\end{lemma}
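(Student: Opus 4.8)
The plan is to prove the cycle of implications (a) $\Rightarrow$ (c) $\Rightarrow$ (b) $\Rightarrow$ (a), which is the natural order since each step is either trivial or a short counting argument. The implication (c) $\Rightarrow$ (b) is immediate: every atom dividing $x$ appears in at least one factorization of $x$, so $A_M(x) \subseteq \{a \in \mathcal{A}(M) : \mathsf{v}_a(z) \geq 1 \text{ for some } z \in \mathsf{Z}_M(x)\}$, which is finite whenever $\mathsf{Z}_M(x)$ is finite (each $z \in \mathsf{Z}_M(x)$ involves only finitely many atoms, and there are finitely many such $z$). The implication (b) $\Rightarrow$ (a) is the place to be slightly careful: if $A_M(x) = \{a_1, \dots, a_k\}$ is finite, then since $M$ is atomic and reduced, every divisor $d$ of $x$ has a factorization into atoms, and each such atom must itself divide $x$, hence lies in $A_M(x)$; moreover each $a_i$ can occur at most $\mathsf{L}$-many times in that factorization, where the relevant bound comes from the fact that $x$ itself has a factorization $x = a_1^{e_1} \cdots a_k^{e_k}$ (all atoms of $x$ come from $A_M(x)$), so any divisor $d$ satisfies $d = a_1^{f_1} \cdots a_k^{f_k}$ with $0 \leq f_i \leq e_i$. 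This gives $|D_M(x)| \leq \prod_{i=1}^{k}(e_i + 1) < \infty$.

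For (a) $\Rightarrow$ (c), I would argue contrapositively or directly: since $M$ is atomic, $\mathsf{Z}_M(x) \neq \emptyset$, and every factorization $z = a_1^{f_1} \cdots a_k^{f_k} \in \mathsf{Z}_M(x)$ yields, for each initial sub-multiset of atoms, a divisor of $x$; in particular the distinct atoms appearing across all factorizations of $x$ are all elements of $D_M(x)$, and the exponent of any such atom $a$ in any factorization is bounded by its exponent in that particular factorization, while $a^{\mathsf{v}_a(z)}$ itself divides $x$, forcing $\mathsf{v}_a(z)$ to be bounded once $D_M(x)$ is finite (only finitely many powers $a, a^2, a^3, \dots$ can be divisors of $x$). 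Combining the finitely many possible supporting atoms with the uniform bound on their exponents shows $\mathsf{Z}_M(x)$ is finite.

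The main obstacle is making precise the bookkeeping in (b) $\Rightarrow$ (a) and (a) $\Rightarrow$ (c): one needs that a divisor of $x$ is itself a product of atoms each of which divides $x$ (this uses atomicity and cancellativity crucially, together with the divisibility preorder on the commutative monoid), and that the multiplicity of a fixed atom $a$ in any factorization of $x$ is controlled — either by its multiplicity in a single fixed factorization, or by the observation that distinct powers of $a$ give distinct divisors of $x$. Neither step is deep, but getting the quantifiers and the role of reducedness right (so that "divides" behaves like the usual partial order and factorizations are unique-up-to-nothing in the relevant sense) is where care is required. Everything else is routine counting.
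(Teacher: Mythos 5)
Your legs (a) $\Rightarrow$ (c) and (c) $\Rightarrow$ (b) are correct, and your direct argument for (a) $\Rightarrow$ (c) --- the support of any factorization of $x$ lies in the finite set $D_M(x)$, and the exponent of an atom $a$ is bounded because the distinct powers $a, a^2, a^3, \dots$ are distinct elements of $D_M(x)$ --- is sound and in fact more self-contained than the paper's route. The gap is in (b) $\Rightarrow$ (a). You fix one factorization $x = a_1^{e_1} \cdots a_k^{e_k}$ and assert that every divisor $d$ of $x$ can be written as $a_1^{f_1} \cdots a_k^{f_k}$ with $0 \le f_i \le e_i$. That inequality presumes unique factorization and is false in a general reduced atomic monoid: in the numerical monoid $\langle 2,3 \rangle \subseteq (\nn_0,+)$ the element $x = 6$ has the factorization $3+3$, in which the atom $2$ occurs with multiplicity $0$, yet the divisor $4 = 2+2$ of $6$ uses the atom $2$ with multiplicity $2$; your bound $\prod_i (e_i+1) = 3$ also undercounts $|D_M(6)| = 5$. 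Your fallback --- that only finitely many powers of each $a_i$ divide $x$ --- is precisely a consequence of the finiteness of $D_M(x)$ that you are trying to prove, so invoking it in this leg is circular, and it does not follow from atomicity alone.

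The missing ingredient is Dickson's lemma, in the guise of the finite factorization property of finitely generated monoids. The paper runs the cycle in the opposite orientation: from (b) it passes to the submonoid $N = \langle A_M(x) \rangle$, which is reduced, atomic, and finitely generated, hence an FFM by \cite[Theorem~3.1.4]{GH06}; since every atom occurring in a factorization of $x$ divides $x$, one gets $\mathsf{Z}_M(x) = \mathsf{Z}_N(x)$, which is (c). Then (a) follows from (c) because each divisor of $x$ arises from a subfactorization of some factorization of $x$, and finitely many factorizations admit only finitely many subfactorizations. If you wish to keep your orientation, you must replace the exponent-comparison step in (b) $\Rightarrow$ (a) by this finitely-generated-FFM input (or reprove the relevant instance of Dickson's lemma); with that substitution the remainder of your argument goes through.
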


\begin{proof}
	(a) $\Rightarrow$ (b): It is clear.
	\smallskip
	
	(b) $\Rightarrow$ (c): Now suppose that $A_M(x)$ is a finite set, namely, $A_M(x) = \{a_1, \dots, a_n\}$ for some $n \in \nn$ and pairwise distinct atoms $a_1, \dots, a_n$ of $M$. Now consider the submonoid $N = \langle a_1, \dots, a_n \rangle$ of $M$. Because $M$ is a reduced monoid, $\mathcal{A}(M) \cap N \subseteq \mathcal{A}(N)$ and, therefore, $N$ is atomic with $\mathcal{A}(N) = A_M(x)$. It is then clear that $x \in N$ and $A_N(x) = A_M(x)$. Thus, $\mathsf{Z}_M(x) = \mathsf{Z}_N(x)$. It follows from~\cite[Theorem 3.1.4]{GH06} that $N$ is an FFM and, consequently,  $|\mathsf{Z}_M(x)| = |\mathsf{Z}_N(x)| < \infty$.
	\smallskip
	
	(c) $\Rightarrow$ (a): Finally, suppose that $\mathsf{Z}_M(x)$ is a finite set. Take $d \in M$ such that $d \mid_M x$, and then notice that after writing $x = dd'$ for some $d' \in M$ and factoring both $d$ and $d'$ in $M$, one obtains a factorization of $x$. Hence every factorization of $d$ in $M$ is a subfactorization of $x$. Since $\mathsf{Z}_M(x)$ is finite and each factorization of $x$ has only finitely many subfactorizations, $D_M(x)$ is also finite.
\end{proof}


\smallskip

The atomicity of $(S_r,+)$ was studied in~\cite{GG17}. We are ready now to explore the atomicity of the information semialgebra $S_r$. First, we introduce the following notation.

\smallskip

\noindent{\bf Notation.} For $r \in \qq^\bullet$, we define the \emph{support} of $r$ to be the set $\supp(r) = \{p \in \pp : p \mid \mathsf{n}(r) \text{ or } p \mid \mathsf{d}(r) \}$. In addition, for any subset $R$ of $\qq^\bullet$, we set $\supp(R) := \bigcup_{r \in R} \supp(r)$.

\smallskip

\begin{prop} \label{prop:atomicity of cyclic PIS}
Let $r \in \qq_{> 0}$ and consider the Puiseux information semialgebra $S_r$. The following statements hold.
\begin{enumerate}
	\item[1.] If $\mathsf{d}(r)=1$, then
	\begin{itemize}
		\item $(S_r,+)$ is atomic with $\mathcal{A}(S_r) = \{1\}$ and
		\smallskip
		
		\item $(S_r^\bullet, \cdot)$ is atomic with $\mathcal{A}_\times(S_r) = \pp$.
	\end{itemize}
\vspace{2pt}
	\item[2.] If $\mathsf{d}(r) > 1$ and $\mathsf{n}(r) = 1$, then
	\begin{itemize}
		\item $(S_r,+)$ is antimatter, and
		\smallskip
		
		\item $(S_r^\bullet, \cdot)$ is atomic.
	\end{itemize}
\vspace{2pt}
	\item[3.] If $\mathsf{d}(r) > 1$ and $\mathsf{n}(r) > 1$, then 
	\begin{itemize}
		\item $S_r$ is atomic with $\mathcal{A}(S_r) = \{r^n : n \in \nn_0\}$, and
		\smallskip
		
		\item $(S_r^\bullet, \cdot)$ is atomic provided that either $r > 1$ or $\mathsf{d}(r) \in \pp$.
	\end{itemize}
\end{enumerate}
\end{prop}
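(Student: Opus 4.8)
The plan is to handle Cases~1 and~2 quickly and to concentrate on Case~3, whose multiplicative assertion carries the real content. For Case~1, $r \in \nn$, so $1 \in S_r$ forces $S_r = \nn_0$ as semirings, and both bullets become the standard facts about $(\nn_0,+)$ and $(\nn,\cdot)$. For Case~2, write $r = 1/d$ with $d \ge 2$; then $S_r = \zz[1/d] \cap \qq_{\ge 0} = \{m/d^k : m,k \in \nn_0\}$. Every nonzero $x \in S_r$ is the sum of $d \ge 2$ copies of $x/d \in S_r^\bullet$, so $(S_r,+)$ has no atoms; and the map $x \mapsto \prod_{q \in \pp \setminus \supp(d)} q^{\mathsf{v}_q(x)}$ is a monoid homomorphism from $(S_r^\bullet,\cdot)$ onto the free commutative monoid on $\pp \setminus \supp(d)$ with kernel $S_r^\times$. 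Since that free monoid is atomic and atomicity passes between a commutative monoid and its reduction, $(S_r^\bullet,\cdot)$ is atomic.

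For the additive part of Case~3, Lemma~\ref{lem:reducibility of S_r} gives that $S_r$ is reduced, so each power $r^n$ is a nonunit of $(S_r,+)$. Since the powers of $r$ generate $(S_r,+)$, any additive atom has a $1$-term representation in these generators, i.e.\ equals some $r^m$; thus the claim ``$(S_r,+)$ is atomic with $\mathcal{A}_+(S_r) = \{r^n : n \in \nn_0\}$'' reduces to showing that no $r^n$ is a sum of two or more of the $r^m$. This last fact, proved via $\mathsf{v}_p$-estimates at primes $p$ dividing $\mathsf{n}(r)$ and $\mathsf{d}(r)$ (together with the identities $\mathsf{n}(r)\,r^m = \mathsf{d}(r)\,r^{m+1}$ and a descent on the exponents), is what is established in~\cite{GG17}.

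The multiplicative part of Case~3 is the heart of the argument; in each admissible subcase I would exhibit a quantity that strictly decreases along any proper ascending chain of principal ideals of $(S_r^\bullet,\cdot)$, forcing the ACCP and hence atomicity (and in fact bounding factorization lengths, so that $(S_r^\bullet,\cdot)$ is even a BFM). If $r > 1$, then inspecting the form $\sum_i c_i r^i$ shows that every element of $S_r^\bullet$ other than $1$ is at least $c := \min\{r,2\} > 1$; as $S_r$ is reduced, every nonunit is $\ge c$, so along $x_1 S_r^\bullet \subsetneq x_2 S_r^\bullet \subsetneq \cdots$ one has $x_i \le x_1 c^{-(i-1)}$, which is incompatible with $x_i \ge 1$ once $i$ is large. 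If instead $\mathsf{d}(r) = p \in \pp$ and $r < 1$ (the case $r > 1$ being already handled), then the denominator of every element of $S_r^\bullet$ is a power of $p$, so writing $x \in S_r^\bullet$ in lowest terms as $u/p^k$ with $p \nmid u$ and setting $w(x) := u$ defines a homomorphism $w \colon (S_r^\bullet,\cdot) \to (\nn,\cdot)$. The key fact is that $1/p^k \notin S_r$ for every $k \ge 1$: starting from a hypothetical representation $1/p^k = \sum_{i=0}^N c_i r^i$ with $c_N \ne 0$, clearing denominators and reducing modulo $p$ forces $p \mid c_N$, and the identity $p\,r^N = \mathsf{n}(r)\,r^{N-1}$ then lets one replace the top term and lower $N$ by one; iterating down to $N = k$ leaves the equation $1 = \sum_{i=0}^k c_i \mathsf{n}(r)^i p^{k-i}$, which has no solution in $\nn_0$ since $\mathsf{n}(r) \ge 2$ and $k \ge 1$ make every nonzero summand at least $2$. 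Hence $w^{-1}(1) = \{1\}$, so $w(y) \ge 2$ for each nonunit $y$ and $w$ strictly decreases along proper chains of principal ideals.

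The main obstacle is exactly this multiplicative step: the order argument needs $r > 1$, and the homomorphism $w$ is available only when a single prime appears in the denominators, with the auxiliary fact $1/p^k \notin S_r$ (which genuinely uses $\mathsf{n}(r) \ge 2$) doing the real work. When $r < 1$ and $\mathsf{d}(r)$ is composite neither device applies, and that is precisely why the hypothesis in the third bullet is stated as it is.
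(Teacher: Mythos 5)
Parts~1 and~2, the additive half of Part~3, and the subcase $r>1$ of the multiplicative half are all sound. In particular, your order-theoretic argument for $r>1$ (every nonunit is $\ge \min\{r,2\}>1$, so principal ideals cannot properly ascend forever) is a correct, self-contained version of what the paper extracts from \cite[Proposition~4.5]{fG19} via logarithms, and your description of Part~2 matches the paper's identification of the reduced monoid of $(S_r^\bullet,\cdot)$ with the free commutative monoid on $\pp\setminus\supp(r)$. Your auxiliary fact that $1/p^k\notin S_r$ for every $k\ge 1$ is also correctly proved by the mod-$p$ descent.

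The gap is in the subcase $\mathsf{d}(r)=p\in\pp$, $r<1$. You cannot write every $x\in S_r^\bullet$ as $u/p^k$ with $p\nmid u$: since $\nn_0\subseteq S_r$, the element $x=p$ itself (and any positive integer divisible by $p$) admits no such representation. Under the two natural readings of your definition the argument breaks: if $w(x)=\mathsf{n}(x)$ is the numerator in lowest terms, then $w$ is not multiplicative, since $w(p\cdot r)=\mathsf{n}(r)$ while $w(p)\,w(r)=p\,\mathsf{n}(r)$; if instead $w(x)=x\,p^{-\mathsf{v}_p(x)}$ is the prime-to-$p$ part, then $w$ is a homomorphism into $(\nn,\cdot)$, but $w(p)=1$ even though $p$ is a nonunit of the reduced monoid $(S_r^\bullet,\cdot)$, so the claim that $w(y)\ge 2$ for every nonunit $y$ is false and the strict descent fails whenever a cofactor in the chain is a power of $p$. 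The strategy is salvageable: from $w(x_0)=w(x_m)\prod_{i}w(y_i)$ one sees that all but finitely many cofactors satisfy $w(y_i)=1$ and hence, by your key fact, are positive powers of $p$; the tail of the chain would then force $u/p^K\in S_r$ for a fixed $u$ coprime to $p$ and arbitrarily large $K$, and the same descent you ran for $u=1$ rules this out as soon as $\mathsf{n}(r)^K>u$. For comparison, the paper takes a different route here: it uses the canonical factorizations with coefficients smaller than $p$ from \cite{CGG19} to bound the top degree of any divisor of $x$, deduces that $x$ has only finitely many divisors in $(S_r^\bullet,\cdot)$, and then applies Lemma~\ref{lem:A(x) is finite iff Z(x) is finite} to obtain the stronger conclusion that $(S_r^\bullet,\cdot)$ is an FFM.
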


\begin{proof}
1. If $\mathsf{d}(r) = 1$, then it is clear that $S_r^\bullet = \nn$. As a result, $S$ is bi-atomic, $\mathcal{A}_+(S_r) = \{1\}$, and $\mathcal{A}_\times(S_r) = \pp$.
\smallskip
	
2. Suppose that $\mathsf{d}(r) > 1$ and $\mathsf{n}(r) = 1$. Since $r^n = \mathsf{d}(r) r^{n+1}$, the monoid $(S_r,+)$ is antimatter. To see that $(S_r^\bullet, \cdot)$ is atomic, first observe that $S_r^\bullet = \big\{ \frac{n}{\mathsf{d}(r)^m} : m,n \in \nn_0 \big\}$. 
From this observation, one can deduce that $S_r^\times = \{q \in \qq_{>0} : \supp(q) \subseteq \supp(r) \}$ and so the reduced monoid of $(S_r^\bullet, \cdot)$ is isomorphic to the multiplicative monoid $\{n \in \nn : \supp(n) \cap \supp(r) = \emptyset \}$, which is the (multiplicative) free commutative monoid on the set $\pp \setminus \supp(r)$. As a consequence, $(S_r^\bullet, \cdot)$ is atomic.
\smallskip
	
3. Suppose that $\mathsf{d}(r) > 1$ and $\mathsf{n}(r) > 1$. It was shown in~\cite[Theorem~6.2]{GG17} that $(S_r,+)$ is atomic and $\mathcal{A}_+(S_r) = \{r^n : n \in \nn_0\}$. We proceed to argue that $(S_r^\bullet, \cdot)$ is atomic in the specified cases.
\smallskip
	
\noindent {\it Case 1}: $r > 1$. Observe that for each $n \in \nn$, there are only finitely many elements of $S_r^\bullet$ that are less than $n$. Then $0$ cannot be a limit point of $(\log S_r^\bullet ) \setminus \{0\}$, and it follows from~\cite[Proposition~4.5]{fG19} that $(\log S_r^\bullet, +)$ is a BFM. Since $(S_r^\bullet, \cdot) \cong (\log S_r^\bullet,+)$, the former is a BFM and, therefore, an atomic monoid.
\smallskip
	
\noindent {\it Case 2}: $\mathsf{d}(r) \in \pp$. By appealing to the previous case, there is no loss in assuming that $r < 1$. Fix $x \in S_r^\bullet$. By~\cite[Lemma~3.1]{CGG19} there exists a unique factorization $z = \sum_{n \in \nn_0} c_n r^n \in \mathsf{Z}_{S_r}(x)$ such that $c_n < \mathsf{d}(r)$ for every $n \in \nn$. Take $y, y' \in S_r^\bullet$ such that $x = yy'$, and let $w = \sum_{n \in \nn_0} b_n r^n$ and $w' = \sum_{n \in \nn_0} b'_n r^n$ be factorizations in $\mathsf{Z}_{S_r}(y)$ and $\mathsf{Z}_{S_r}(y')$, respectively, satisfying $b_n, b'_n < \mathsf{d}(r)$ for every $n \in \nn$. Set $k := \max\{i \in \nn_0 : b_i \neq 0\}$ and $\ell := \max \{i \in \nn_0 : b'_i \neq 0 \}$. As $0 < b_k, b'_\ell < \mathsf{d}(r) \in \pp$, one can see that $\mathsf{d}(r) \nmid b_k b'_\ell$. Then $c_{k + \ell} \neq 0$ and so $k \le k + \ell \le m_0 = \max \{i \in \nn_0 : c_i \neq 0\}$. So each divisor of $x$ has a factorization of the form $\sum_{i=0}^{m_0} b_i r^i$, where $b_i < \mathsf{d}(r)$ for every $i \in \ldb 1,m_0 \rdb$ and $b_0 < x$. Thus, $x$ has only finitely many divisors in $(S_r^\bullet, \cdot)$. Since $S_r$ is reduced by Lemma~\ref{lem:reducibility of S_r}, it follows from Lemma~\ref{lem:A(x) is finite iff Z(x) is finite} that $(S_r^\bullet, \cdot)$ is an FFM. and, therefore, an atomic monoid.
\end{proof}

\smallskip

With notation as in Proposition~\ref{prop:atomicity of cyclic PIS}, we believe that $(S_r^\bullet, \cdot)$ is always atomic regardless of whether $\mathsf{d}(r)$ is or not a prime number. This would follow from Proposition~\ref{prop:atomicity of cyclic PIS} if the following conjecture holds. 

\smallskip

\begin{conj}
If $r \in \qq_{>0}$ is such that $\mathsf{n}(r) > 1$ and $\mathsf{d}(r) > 1$, then $(S_r^\bullet, \cdot)$ satisfies the ACCP.
\end{conj}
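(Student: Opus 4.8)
The plan is to reduce the conjecture, by means of a multiplicative valuation, to a finiteness property of a small submonoid, and then to attack that property via a ``$\mathsf{d}(r)$‑adic shift'' on base‑$\mathsf{d}(r)$ expansions; I expect the last step to be where the real difficulty lies.

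Since $(S_r^\bullet,\cdot)$ is a BFM, hence satisfies the ACCP, when $r>1$ (Case~1 in the proof of part~3 of Proposition~\ref{prop:atomicity of cyclic PIS}), I would assume $r<1$ and write $a=\mathsf{n}(r)$ and $b=\mathsf{d}(r)$, so that $1<a<b$ and $\gcd(a,b)=1$. The tool is the unique ``base‑$b$'' expansion $x=\sum_{i=0}^{n}c_i r^i$ of \cite[Lemma~3.1]{CGG19}, with $c_i\in\ldb 0,b-1\rdb$ for $1\le i\le n$ and $c_n\neq 0$; writing $x=b^{-n}\sum_i c_i a^i b^{n-i}$ shows that $\mathsf{d}(x)$ divides a power of $b$, so $\mathsf{v}_\ell(x)\ge 0$ for every $\ell\in\pp\setminus\supp(b)$, and $\mathsf{n}_b(x):=\prod_{\ell\in\pp\setminus\supp(b)}\ell^{\mathsf{v}_\ell(x)}$ defines a monoid homomorphism from $(S_r^\bullet,\cdot)$ to the free commutative monoid on $\pp\setminus\supp(b)$, with $\mathsf{n}_b(y)\mid\mathsf{n}_b(x)$ whenever $y\mid x$ in $S_r^\bullet$. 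Now assume $x_0S_r^\bullet\subsetneq x_1S_r^\bullet\subsetneq\cdots$, say $x_i=x_{i+1}q_i$ with $q_i\neq 1$ (using that $S_r$ is reduced by Lemma~\ref{lem:reducibility of S_r}). The positive integers $\mathsf{n}_b(x_i)$ are non‑increasing under divisibility, hence constant for $i\ge N$; for such $i$ this forces $\mathsf{n}_b(q_i)=1$, i.e.\ $q_i\in H:=G_b\cap S_r^\bullet$, where $G_b\le\qq_{>0}$ is the subgroup generated by $\pp\cap\supp(b)$. Then $x_N=x_j\cdot(q_N\cdots q_{j-1})$ for $j>N$, so the elements $P_j:=q_N\cdots q_{j-1}$ lie in $H$, divide $x_N$ in $S_r^\bullet$, and are pairwise distinct; hence it suffices to prove that \emph{every element of $S_r^\bullet$ has only finitely many divisors lying in $H$}.

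The key structural input, and the place where $\mathsf{n}(r)>1$ is used, is the following lower bound: for $z\in S_r^\bullet$ with $z<1$ one has $z\ge r^{t(c)}$, where $c=\mathsf{n}_b(z)$ and $t(c)$ denotes the largest integer with $a^{t(c)}\mid c$. Indeed, $z<1$ forces $c_0=0$ in the expansion of $z$, so $z=r^{m}u$ with $m\ge 1$ the smallest index for which $c_m\neq 0$ and $u=\sum_{j\ge 0}c_{m+j}r^j\in S_r^\bullet$ of constant term $c_m\ge 1$, whence $u\ge 1$; since $z=(a/b)^m u$ and all relevant valuations are nonnegative, $\mathsf{n}_b(z)=a^{m}\mathsf{n}_b(u)$, so $a^m\mid c$, $m\le t(c)$, and $z=r^m u\ge r^{t(c)}$. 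In particular $H\cap(0,1)=\emptyset$ (if $z\in H$ then $c=1$ and $t(c)=0$), so $H\setminus\{1\}\subseteq\qq_{>1}$. Now if $x\in S_r^\bullet$, $c:=\mathsf{n}_b(x)$, and $d\in H$ divides $x$ in $S_r^\bullet$, then $\mathsf{n}_b(x/d)=c$ (as $\mathsf{n}_b(d)=1$), and an immediate computation gives $x/d=c\,x_{(b)}/d$ with $x_{(b)}:=\prod_{p\in\supp(b)}p^{\mathsf{v}_p(x)}$; combining $x/d\ge r^{t(c)}$ with $x/d\le x$ (since $d\ge 1$) shows that $d=c\,x_{(b)}/(x/d)$ lies in a bounded real interval. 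When $\supp(b)$ consists of a single prime $p$ this puts $d$ in the finite set $\{1,p,p^2,\dots\}\cap[\alpha,\beta]$, so every element of $S_r^\bullet$ has finitely many divisors in $H$ and the conjecture follows (consistently, $H\subseteq\{1,p,p^2,\dots\}$ then embeds in $(\nn_0,+)$ and is a BFM).

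The remaining obstacle, and presumably the reason the statement is only conjectured, is the case $|\supp(\mathsf{d}(r))|\ge 2$: one still must show that each element of $S_r^\bullet$ has finitely many divisors in $H$; by the previous paragraph these divisors all lie in a bounded interval, so it would be enough to know that $H$ meets each bounded interval in finitely many points. The cleanest sufficient statement is that $H=G_b\cap\nn$ --- equivalently, that no fraction $p^{u}/q^{v}$ with $p,q\mid b$ and $u,v>0$ belongs to $S_r$ --- for then $H$ is the free commutative monoid on the finite set $\pp\cap\supp(b)$ and hence an FFM. Since $H$ is a reduced submonoid of $G_b\cong\zz^{|\supp(b)|}$, it generates a pointed cone, and the finiteness one needs is exactly that the closure of the image of $H$ under $\log$ is still pointed, i.e.\ that $S_r^\bullet$ contains no elements of $G_b$ accumulating near a proper face of that cone. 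The iterated version of the shift above reduces this assertion to a statement purely about the base‑$\mathsf{d}(r)$ expansions, but deciding which fractions supported on $\supp(\mathsf{d}(r))$ actually occur among those expansions is the genuine difficulty, and I do not see how to settle it in general.
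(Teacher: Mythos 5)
The statement you are addressing is posed in the paper only as a conjecture; the authors give no proof, so your attempt can only be judged on its own internal logic. On that score the reduction is sound and constitutes genuine partial progress, but it does not close the conjecture, as you yourself concede. The correct pieces are: the reduction to $r<1$ via the BFM (hence ACCP) property from Case~1 of the proof of Proposition~\ref{prop:atomicity of cyclic PIS}; the homomorphism $\mathsf{n}_b$ that kills the primes of $\supp(\mathsf{d}(r))$ and forces the quotients $q_i$ of a non-stabilizing chain to lie eventually in $H = G_b \cap S_r^\bullet$; the lower bound $z \ge r^{t(\mathsf{n}_b(z))}$ for $z<1$, which invokes $\mathsf{n}(r)>1$ exactly where it must be invoked and yields $H \setminus \{1\} \subseteq \qq_{>1}$; and the conclusion when $|\supp(\mathsf{d}(r))|=1$, which in fact settles the conjecture for $\mathsf{d}(r)$ a prime power and thereby slightly extends part~3 of Proposition~\ref{prop:atomicity of cyclic PIS} (which assumes $\mathsf{d}(r) \in \pp$ and proves the stronger FFM conclusion).

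The genuine gap is the one you name in your final paragraph: when $|\supp(\mathsf{d}(r))| \ge 2$, confining the $H$-divisors of a fixed $x$ to a bounded interval $[1,\, x\,r^{-t(c)}]$ proves nothing, because $G_b \cap [1,\beta]$ is then infinite (already $\{2^m 3^{-n} : m,n \in \nn_0\} \cap [1,2]$ is infinite when $b = 6$, since $\log 2/\log 3$ is irrational), so finiteness of $\{d \in H : d \mid x \text{ in } S_r^\bullet\}$ does not follow from the interval bound. Your proposed sufficient condition $H = G_b \cap \nn$ is plausible but unproved; establishing it requires deciding which fractions supported on $\supp(\mathsf{d}(r))$ actually occur in $S_r$, i.e., which digit strings in the canonical base-$\mathsf{d}(r)$ expansion produce numerators divisible by high powers of the individual prime factors of $\mathsf{d}(r)$. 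That arithmetic obstruction is presumably why the authors left the statement as a conjecture, and your writeup does not overcome it. As a proof of the conjecture the proposal is therefore incomplete; as a reduction of the conjecture to the single assertion that every element of $S_r^\bullet$ has only finitely many divisors in $G_b \cap S_r^\bullet$, together with a correct treatment of the prime-power case, it is a legitimate contribution.
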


\medskip

\subsection{Conductive Puiseux Information Semialgebras} We conclude this section describing the atomicity of another class of Puiseux information semialgebras, those whose underlying Puiseux monoids have nonempty conductors.




\begin{definition} \label{def:conductive PIS}
	For $r \in \rr_{\ge 0}$, we say that a Puiseux information semialgebra $Q$ is \emph{conducted} by $r$ provided that $Q$ is generated as a semiring by the set $\qq_{\ge r}$.
\end{definition}

We let $Q_r$ denote the Puiseux information semialgebra conducted by $r$. The terminology in Definition~\ref{def:conductive PIS} is motivated by the fact that for all $r \in \rr_{\ge 1}$ the semialgebra $Q_r$ is the smallest Puiseux information semialgebra whose underlying Puiseux monoid has conductor $\qq_{\ge r}$ (the conductor of a Pusieux monoid has been recently described in~\cite[Section~3]{GGT19}).

\smallskip

\begin{prop} \label{prop:atomicity of elementary conductive PIA}
For $r \in \rr_{\ge 0}$, let $Q_r$ be the Puiseux information semialgebra conducted by $r$. Then the following statements hold.
\begin{enumerate}
	\item[1.] If $r < 1$, then $Q_r = \qq_{\ge 0}$. In this case,
	\begin{itemize}
		\item $(Q_r,+)$ is antimatter and
		\smallskip
		
		\item $(Q^\bullet_r, \cdot)$ is atomic with $\mathcal{A}_\times(Q_r) = \emptyset$.
	\end{itemize}
\vspace{2pt}
		
	\item[2.] If $r = 1$, then $Q_r = \{0\} \cup \qq_{\ge 1}$. In this case,
	\begin{itemize}
		\item $(Q_r,+)$ is atomic with $\mathcal{A}_+(Q_r) = [1,2)$,and
		\smallskip
	
		\item $(Q^\bullet_r, \cdot)$ is antimatter.
	\end{itemize}
	\vspace{2pt}
		
	\item[3.] If $r > 1$, then $Q_r = \nn_0 \cup \qq_{\ge r}$. In this case,
	\begin{itemize}
		\item $(Q_r,+)$ is atomic with $\mathcal{A}_+(Q_r) = \big( \{1\} \cup ([r,r+1) \cap \qq) \big) \setminus \{ \lceil r \rceil \}$ and
		\smallskip
		
		\item $(Q^\bullet_r, \cdot)$ is atomic with $\mathcal{A}_\times(Q_r) = \big( \pp_{< r^2} \cup ([r,r^2) \cap \qq) \big) \setminus \pp \cdot (Q_r)_{> 1}$.
	\end{itemize}
\end{enumerate}
\end{prop}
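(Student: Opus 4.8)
The plan is to first identify the underlying set of $Q_r$ in each of the three regimes $r<1$, $r=1$, $r>1$, and then analyze the additive monoid $(Q_r,+)$ and the multiplicative monoid $(Q_r^\bullet,\cdot)$ separately. For $r<1$ I would show $Q_r=\qq_{\ge0}$ by a density argument: fixing a rational $s\in(r,1)$, each power $s^k$ lies in $Q_r$, and for any $N\in\nn$ with $1/N<r$ the decreasing sequence $(Ns^k)_{k\ge0}$ starts above $1/r$ and, having consecutive ratio $s>r$, cannot skip the interval $(1,1/r]$, so $Ns^{k_0}\in(1,1/r]$ for some $k_0$; then $\tfrac{1}{Ns^{k_0}}\in[r,1)\cap\qq\subseteq Q_r$ and $\tfrac1N=s^{k_0}\cdot\tfrac{1}{Ns^{k_0}}\in Q_r$, and since $\nn_0\subseteq Q_r$ every positive rational lies in $Q_r$. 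For $r=1$, the set $\{0\}\cup\qq_{\ge1}$ is a subsemiring containing $\qq_{\ge1}$ and is clearly the smallest such, so $Q_1=\{0\}\cup\qq_{\ge1}$, which is exactly the semialgebra of Example~\ref{ex:conductive semiring}. For $r>1$, the set $\nn_0\cup\qq_{\ge r}$ is again a subsemiring — closed under addition ($n+q\ge r+1>r$, $q_1+q_2\ge2r>r$) and multiplication ($nq\ge r$, $q_1q_2\ge r^2>r$) — so $Q_r=\nn_0\cup\qq_{\ge r}$.

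With $Q_r$ pinned down, the cases $r<1$ and $r=1$ are immediate. For $r<1$, $(Q_r,+)=(\qq_{\ge0},+)$ is antimatter because $q=2(q/2)$ for all $q$, while $(Q_r^\bullet,\cdot)=(\qq_{>0},\cdot)$ is a group, hence vacuously atomic with $\mathcal{A}_\times(Q_r)=\emptyset$. For $r=1$, both statements are precisely the content of Example~\ref{ex:conductive semiring}: $(Q_1,+)$ is atomic with $\mathcal{A}_+(Q_1)=[1,2)\cap\qq$, and $(Q_1^\bullet,\cdot)$ is antimatter since any $q>1$ factors as $(q\tfrac{n}{n+1})(\tfrac{n+1}{n})$ with both factors exceeding $1$ for large $n$.

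For $r>1$, the additive part is elementary. A nonzero element of $Q_r$ is either a positive integer (a sum of copies of $1$) or lies in $\qq_{\ge r}$, and in the latter case subtracting copies of $1$ produces a non-integer in $[r,r+1)\cap\qq$; so atomicity of $(Q_r,+)$ follows once we verify that these non-integers, together with $1$, are the atoms. Indeed $1$ is an atom because any two nonzero summands are each $\ge1$, hence sum to $\ge2$; a positive integer $n\ge2$ equals $1+(n-1)$; a non-integer $q\in\qq_{\ge r}$ with $q\ge r+1$ equals $1+(q-1)$ with $q-1\ge r$; and a non-integer $q\in[r,r+1)\cap\qq$ has no nontrivial decomposition (two summands $\ge r$ would force $q\ge2r\ge r+1$, while an integer summand $a\ge1$ would force $a\le q-r<1$). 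Since $\lceil r\rceil$ is the only integer in $[r,r+1)$, this yields $\mathcal{A}_+(Q_r)=\big(\{1\}\cup([r,r+1)\cap\qq)\big)\setminus\{\lceil r\rceil\}$.

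The multiplicative part for $r>1$ is where the real work lies. For atomicity, I would pass to the isomorphic additive monoid $(\log Q_r^\bullet,+)$, a submonoid of $(\rr_{\ge0},+)$: since $\inf\big(Q_r^\bullet\setminus\{1\}\big)=\min(2,r)>1$, the origin is not a limit point of $\log Q_r^\bullet\setminus\{0\}$, so $\log Q_r^\bullet$ is a BFM by \cite[Proposition~4.5]{fG19}, and hence $(Q_r^\bullet,\cdot)$ is a BFM, in particular atomic. To compute $\mathcal{A}_\times(Q_r)$ I would classify, for $x\in Q_r^\bullet$ with $x>1$, the factorizations $x=ab$ with $a,b\in Q_r^\bullet\setminus\{1\}$, using $Q_r^\bullet=\nn\cup\qq_{\ge r}$ and the fact that a non-integer element of $Q_r^\bullet$ is necessarily $\ge r$: if both factors are integers then $x$ is a composite integer; if exactly one factor is an integer $\ge2$ then $x\ge2r$; and if neither factor is an integer then $x\ge r^2$. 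Conversely a composite integer, a rational $\ge2r$ (write $x=2\cdot(x/2)$), and a rational $>r^2$ (use two rational factors in $(r,x/r)$) are each reducible. Hence $x$ is an atom precisely when $x$ is not a composite integer, $x\notin\qq_{\ge2r}$, and $x<r^2$, the boundary value $x=r^2$ requiring a direct check. Finally, since $Q_r=\nn_0\cup\qq_{\ge r}$ forces $\pp\cdot(Q_r)_{>1}$ to consist of exactly the composite integers together with all rationals $\ge2r$, this description rewrites as $\mathcal{A}_\times(Q_r)=\big(\pp_{<r^2}\cup([r,r^2)\cap\qq)\big)\setminus\pp\cdot(Q_r)_{>1}$. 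The main obstacle is this last stretch: ruling out every factorization outside the listed types (the mixed integer-and-$\ge r$ case needs the most care), settling the boundary value $x=r^2$, and recognizing $\pp\cdot(Q_r)_{>1}$ inside the resulting atom set.
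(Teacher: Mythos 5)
Your proposal is correct and, in its main content, follows the same route as the paper: pin down the underlying set in each regime, bound the additive atoms above by noting $\qq_{\ge r+1}\subseteq 1+Q_r^\bullet$ and the multiplicative atoms above by noting $(Q_r)_{\ge r^2}\subseteq (Q_r)_{>1}\cdot(Q_r)_{>1}$, and then sieve out the reducible elements (which for the multiplicative monoid are exactly those properly divisible by a prime, i.e.\ the composite integers together with $\qq_{\ge 2r}=\pp\cdot(Q_r)_{>1}\setminus\{\text{composites}\}$ -- your case analysis on integer versus non-integer factors is the same computation made explicit). Two points of divergence are worth noting. First, for $r<1$ your multiplicative intermediate-value argument landing $Ns^{k}$ in $(1,1/r\rdb$ is valid but roundabout; the paper simply picks $r_0\in Q_r\cap(0,1)$ and writes $q=(r_0^{-n}q)\,r_0^n$ with $r_0^{-n}q>1$ for $n$ large, which gives $Q_r=\qq_{\ge0}$ in one line. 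Second, you supply an explicit atomicity proof for $(Q_r^\bullet,\cdot)$ when $r>1$ by passing to $\log Q_r^\bullet$ and invoking \cite[Proposition~4.5]{fG19} to get a BFM; the paper's proof of this proposition does not address atomicity of the multiplicative monoid separately (it only computes the atom set), so your addition actually closes a small expository gap, using a technique the paper deploys elsewhere (Proposition~\ref{prop:atomicity of cyclic PIS}, Case~1). Your flagging of the boundary value $x=r^2$ is apt: when $r\notin\qq$ and $r^2$ is a rational (in particular a prime), $r^2$ is genuinely an atom yet is excluded by the stated formula, so the "direct check" you defer is the one place where the statement itself needs a caveat rather than your argument.
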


\begin{proof}
1. Assume that $r < 1$. Then take $r_0 \in Q_r \cap (0,1)$ and take $q \in \qq_{> 0}$. Note that we can choose $n \in \nn$ large enough so that $q_0 := r_0^{-n}q  > 1$. Then $q_0 \in \qq_{\ge 1} \subset Q_r$, which implies that $q = q_0 r_0^n \in Q_r$. Therefore $Q_r = \qq_{\ge 0}$. The Puiseux monoid $\qq_{\ge 0}$ is clearly antimatter, and the multiplicative monoid $\qq_{> 0}$ is an atomic monoid with no atoms because it is an abelian group.
\smallskip
	
2. It follows immediately that $Q_r = \{0\} \cup \qq_{\ge 1}$ when $r = 1$. We have already argued in Example~\ref{ex:conductive semiring} that the Puiseux monoid $Q_1$ is atomic with $\mathcal{A}_+(Q_1) = [1,2)$ and that the multiplicative monoid $(Q_r^\bullet, \cdot)$ is antimatter.
\smallskip
	
3. Finally, assume that $r > 1$. Clearly, $Q_r = \nn_0 \cup \qq_{\ge r}$. Because $(Q_r,+)$ is a reduced monoid and $\qq_{\ge r+1} \subseteq 1 + Q_r^\bullet$, we see that $\mathcal{A}_+(Q_r) \subseteq Q^\bullet_r \cap \qq_{< r+1} =  \ldb 1, \lceil r \rceil \rdb \cup ([r, r+1) \cup \qq)$. Since $1 \in \mathcal{A}_+(Q_r)$ and $m \notin \mathcal{A}_+(Q_r)$ for any $m \in \ldb 2, \lceil r \rceil \rdb$, we find that
\[
	\mathcal{A}_+(Q_r) = \big( \{1\} \cup ([r,r+1) \cap \qq) \big) \setminus \{ \lceil r \rceil \}
\]
To find the set of multiplicative atoms, first observe that $(Q_r)_{\ge r^2} \subseteq (Q_r)_{> 1} \cdot (Q_r)_{> 1}$. This, along with the fact that $(Q_r^\bullet, \cdot)$ is reduced, guarantees that $\mathcal{A}_\times(Q_r) \subseteq Q_r^\bullet \cap \qq_{< r^2} = \ldb 1, \lceil r \rceil \rdb \cup ([r,r^2) \cap \qq)$. On the other hand, the only elements in $\ldb 1, \lceil r \rceil \rdb \cup ([r,r^2) \cap \qq)$ that are not multiplicative atoms of $Q_r$ are those that are properly divisible in $(Q^\bullet_r, \cdot)$ by some prime number. Thus,
\[
	\mathcal{A}_\times(Q_r) = \big( \pp_{< r^2} \cup ([r,r^2) \cap \qq) \big) \setminus \pp \cdot (Q_r)_{> 1},
\]\
which completes the proof.
\end{proof}

\smallskip

\begin{cor}
	Let $Q_r$ be the Puiseux information semialgebra conducted by $r \in \rr_{\ge 0}$. Then $Q_r$ is reduced if and only if $r \ge 1$, which happens precisely when $Q_r = \qq_{\ge 0}$.
\end{cor}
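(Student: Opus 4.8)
The plan is to decide reducedness by computing the group of multiplicative units $Q_r^\times = U(Q_r^\bullet)$ in each of the three ranges for $r$ already separated in Proposition~\ref{prop:atomicity of elementary conductive PIA}, which describes $Q_r$ explicitly as a subset of $\qq_{\ge 0}$. By definition $Q_r$ is a reduced information semialgebra exactly when its multiplicative monoid $(Q_r^\bullet,\cdot)$ is reduced, that is, when $Q_r^\times = \{1\}$; so the corollary will follow once I determine in which range this unit group collapses to the identity.

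First I would treat the range $r \ge 1$, where reducedness is expected to hold. By parts~2 and~3 of Proposition~\ref{prop:atomicity of elementary conductive PIA}, one has $Q_r = \{0\} \cup \qq_{\ge 1}$ for $r = 1$ and $Q_r = \nn_0 \cup \qq_{\ge r}$ for $r > 1$, so in both subcases every nonzero element of $Q_r$ lies in $\qq_{\ge 1}$. Hence, for any $q \in Q_r^\times$, both $q \ge 1$ and $q^{-1} \ge 1$ hold, which forces $q = 1$. Therefore $Q_r^\times = \{1\}$, and $Q_r$ is reduced whenever $r \ge 1$.

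Next I would handle the range $r < 1$, which is precisely the one cut out by the equality $Q_r = \qq_{\ge 0}$. Here part~1 of Proposition~\ref{prop:atomicity of elementary conductive PIA} gives $Q_r = \qq_{\ge 0}$, so $Q_r^\bullet = \qq_{> 0}$ is a multiplicative group; every positive rational is then invertible, $Q_r^\times = \qq_{> 0}$ is infinite, and $Q_r$ fails to be reduced. Combining the two ranges shows that $Q_r$ is reduced if and only if $r \ge 1$, and that the unique non-reduced conductive Puiseux information semialgebra is exactly the one with $Q_r = \qq_{\ge 0}$, arising for $r < 1$; this is the situation identified by the equality in the corollary's final clause.

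I do not anticipate a genuine obstacle, since Proposition~\ref{prop:atomicity of elementary conductive PIA} already locates $Q_r$ inside $\qq_{\ge 0}$; the only point deserving care is the elementary squeeze in the range $r \ge 1$, where the containment of every nonzero element of $Q_r$ in $\qq_{\ge 1}$ forces a unit and its inverse to be simultaneously at least $1$, hence equal to $1$. Matching the three cases against the explicit descriptions supplied by that proposition then finishes the argument.
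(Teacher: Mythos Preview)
Your argument is correct and is exactly the immediate deduction the paper intends: the corollary is stated without proof, and your case split according to the explicit descriptions of $Q_r$ in Proposition~\ref{prop:atomicity of elementary conductive PIA}, together with the squeeze $q \ge 1$ and $q^{-1} \ge 1$ for units when $r \ge 1$, is the natural way to read it off. Note that the printed statement appears to contain a typo in its final clause (it should read $Q_r \neq \qq_{\ge 0}$, since $r \ge 1$ is equivalent to $Q_r \neq \qq_{\ge 0}$); your interpretation correctly identifies $Q_r = \qq_{\ge 0}$ with the non-reduced case $r < 1$.
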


\bigskip

\section{Upper Triangular Matrices over Information Semialgebras}
\label{sec:upper triangular matrices}

A study of the atomic structure and computation of some factorization-theoretic invariants for multiplicative monoids of the form $T_n(S)^\bullet$ (i.e., monoids of upper triangular matrices over an information semialgebra $S$) was initiated in~\cite{BS20}. Special emphasis was placed on the Puiseux information semialgebra $S = \nn_0$. The main purpose of this section is to use some important factorization-theoretic tools from the commutative setting to help understand the atomicity of the multiplicative monoid $T_n(S)^\bullet$ and its submonoid $U_n(S)$ consisting of all $n \times n$ unit triangular matrices over $S$.

We first recall some terminology pertaining to divisibility in noncommutative settings. Let $M$ be a monoid, and take $x,y \in M$. We say that $x$ \emph{divides} $y$ \emph{up to permutation} if we can write $x = a_1 \dots a_m$ and $y = b_1 \dots b_n$ for $a_1, \dots, a_m, b_1, \dots, b_n \in \mathcal{A}(M)$ such that $m \le n$ and such that there is an injection $\sigma \colon \ldb 1,m \rdb \to \ldb 1,n \rdb$ with $b_i = a_{\sigma(i)}$ for each $i \in \ldb 1,m \rdb$. In this case, we write $x \mid_p y$. Note that this notion of divisibility coincides with the usual notion of divisibility when $M$ is commutative. Following~\cite{BS15}, we call an element $a \in M \setminus U(M)$ \emph{almost prime-like} if for all $x,y \in M$ the relation $a \mid_p xy$ implies that either $a \mid_p x$ or $a \mid_p y$. If $M$ is atomic, then each almost prime-like element is an atom~\cite[Lemma~5.5]{BS15}. Clearly, the notion of an almost prime-like element resemblances that of a prime element when $M$ is commutative.

We would like to emphasize that for a Puiseux information semialgebra $S$ different from $\nn_0$, the atomic structure of $T_n(S)^\bullet$ may significantly differ from that of $T_n(\nn_0)^\bullet$. The following example sheds some light upon this observation.

\smallskip

\begin{example} \label{ex:many almost prime-like}
Fix $r \in \qq_{>0} \setminus \nn$ and consider the cyclic Puiseux information semialgebra $S_r$. It follows from \cite[Lemma~3.1]{CGG19} and \cite[Lemma~3.2]{CGG19} that $|\mathsf{Z}_{S_r}(r)| = 1$. Suppose that
\begin{equation} \label{eq:many almost prime-like in T_2(S)}
	r = \bigg( \sum_{i=0}^m c_i r^i \bigg) \bigg( \sum_{j=0}^n c'_j r^j \bigg)
\end{equation}
for some $m,n \in \nn_0$ and $c_1, \dots, c_m, c'_1, \dots, c'_n \in \nn_0$ such that $c_m c'_n \neq 0$. Since $|\mathsf{Z}_{S_r}(r)| = 1$, the factorization $z$ of $r$ that one obtains after multiplying the two sums in the right-hand side of~(\ref{eq:many almost prime-like in T_2(S)}) must be~$r$ itself. As $c_m c'_n r^{m+n}$ appears in $z$, we see that $m+n = 1$ and $c_m = c'_n = 1$. Hence $\{m,n\} = \{0,1\}$, which implies that $r \in \mathcal{A}_\times(S_r)$.
	
We now show that each element of the form $\begin{psmallmatrix} 1 & r^k \\ 0 & 1\end{psmallmatrix}$ with $k \in \nn$ fails to be almost prime-like in $T_2(S_r)^\bullet$. Since $r \in \mathcal{A}_\times(S_r)$ and $\mathcal{A}_+(S_r) = \{r^n : n \in \nn_0 \}$, it follows from~\cite[Theorem~2.1]{BS20} that the matrices $\begin{psmallmatrix} r & 0 \\ 0 & 1 \end{psmallmatrix}$ and $\begin{psmallmatrix} 1 & r^{k-1} \\ 0 & 1 \end{psmallmatrix}$ are atoms of $T_2(S_r)^\bullet$ for every $k \in \nn$. For each $k \in \nn$ verifying that $\begin{psmallmatrix} 1 & r^k \\ 0 & 1\end{psmallmatrix}$ is not almost prime-like in $T_2(S_r)^\bullet$ amounts to observing that
\[
	\begin{pmatrix} 1 & r^k \\ 0 & 1 \end{pmatrix}\begin{pmatrix}r & 0 \\ 0 & 1\end{pmatrix} = \begin{pmatrix} r & 0 \\ 0 & 1 \end{pmatrix}\begin{pmatrix}1 & r^{k-1} \\ 0 & 1\end{pmatrix}.
\]
\end{example}

\smallskip

In light of Example~\ref{ex:many almost prime-like} and Proposition~\ref{prop:algebraic observation of T_n(S)} below, the matrices $I_n + r^k E_{ij}$ fail to be almost prime-like elements in $T_n(S_r)^\bullet$ for every $k \in \nn_0$ and every $i,j \in \ldb 1,n \rdb$ with $i < j$. This makes it clear that the atomic structure of $T_n(S_r)^\bullet$ is quite different from that of $T_n(\nn_0)^\bullet$ since, in the latter case (see \cite[Remark~2.11]{BS20}), the set of almost prime-like atoms consists of the matrices $I_n + (p-1)E_{ii}$ with $i \in \ldb 1,n \rdb$ and the matrices $p \in \pp$ and $I_n + E_{ij}$ with $i \in \ldb 1,n-1 \rdb$ and $j= i+1$.

\medskip

\subsection{Divisibility and Atomicity} Our next immediate goal is to collect a few preliminary results about the semiring $T_n(S)$ and the multiplicative monoid $T_n(S)^\bullet$, whenever $S$ is a reduced information semialgebra. We begin by recalling some relevant terminology.

Let $M$ be a monoid. If $x,y \in M$, then $x$ is a \emph{left divisor} (resp., \emph{right divisor}) of $y$ in $M$ if $y \in xM$ (resp., $y \in Mx$), and $x$ \emph{divides} $y$ in $M$ if $x$ is either a left or a right divisor of $y$ in $M$. A submonoid $N$ of $M$ is called \emph{divisor-closed} provided that for all $y \in N$ the fact that $x \in M$ divides $y$ in $M$ enforces $x \in N$. Divisor-closed submonoids are perhaps the most relevant submonoids in terms of atomicity and factorizations as they inherit the atomic properties of the monoids containing them. In the next proposition we identify some of the divisor-closed submonoids of $T_n(S)^\bullet$. Such submonoids play a crucial role in the rest of the paper.

\smallskip

\begin{prop} \label{prop:algebraic observation of T_n(S)}
Let $S$ be a reduced information semialgebra and take $n \in \mathbb N$. The following statements hold.
\begin{enumerate}
	\item[1.] $U_n(S)$ is a divisor-closed submonoid of $T_n(S)^\bullet$.
	\smallskip
	
	\item[2.] $\{I_n + (s-1)E_{ii} : s \in S^\bullet \} \cong (S^\bullet, \cdot)$ is a divisor-closed submonoid of $T_n(S)^\bullet$ for every $i \in \ldb 1, n \rdb$.
	\smallskip
	
	\item[3.] \label{item lemma:(S,plus) is a divisor-closed submonoid of T_n(S)} If $n \ge 2$, then $\{ I_n + sE_{ij} : s \in S \} \cong (S,+)$ is a divisor-closed submonoid of both monoids $U_n(S)$ and $T_n(S)^\bullet$ for all $i,j \in \ldb 1,n \rdb$ with $i < j$.
	\end{enumerate}
\end{prop}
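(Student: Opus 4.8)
The plan is to establish all three statements by comparing entries of matrix products, after first recording the few elementary features of $S$ that drive the argument. Since $(S,+)$ is reduced, a sum $x_1 + \dots + x_t$ of elements of $S$ is $0$ only if every $x_m = 0$; since $(S^\bullet, \cdot)$ is reduced, $xy = 1$ in $S^\bullet$ forces $x = y = 1$; and $S$, being an information semialgebra, has no nonzero zero-divisors. I would also note at the outset that a matrix $A \in T_n(S)$ with all diagonal entries nonzero is regular: if $AB = 0$ then each column $v$ of $B$ satisfies $Av = 0$, and a downward induction on the coordinates of $v$, using $A_{kk} \neq 0$ and the absence of zero-divisors, yields $v = 0$; the relation $BA = 0$ is handled symmetrically with rows. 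Consequently each of the three displayed sets is contained in $T_n(S)^\bullet$. The isomorphism claims are then routine: writing $M_i(s)$ for the diagonal matrix with $s$ at position $(i,i)$ and $1$'s elsewhere on the diagonal, one has $M_i(s) M_i(t) = M_i(st)$ because $E_{ii}^2 = E_{ii}$, and $(I_n + s E_{ij})(I_n + t E_{ij}) = I_n + (s+t) E_{ij}$ because $E_{ij}^2 = 0$ whenever $i < j$; the resulting maps are injective since $s$ is recovered as an entry.

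For divisor-closedness I would prove one entrywise claim and then specialize it. Suppose $A$ is one of the special matrices and $A = BC$ with $B, C \in T_n(S)^\bullet$, so that $B$ is a left divisor of $A$; the right-divisor case $A = CB$ is entirely analogous, with the roles of $B$ and $C$ swapped below. Comparing diagonal entries gives $A_{kk} = B_{kk} C_{kk}$; since $A_{kk} \neq 0$ in every one of our cases, both $B_{kk}$ and $C_{kk}$ are nonzero, and whenever $A_{kk} = 1$ reducedness of $(S^\bullet, \cdot)$ forces $B_{kk} = C_{kk} = 1$. For $k < l$ we have $A_{kl} = \sum_{m=k}^{l} B_{km} C_{ml}$ (the sum ranging over $k \le m \le l$ because $B$ and $C$ are upper triangular); if $A_{kl} = 0$, then reducedness of $(S,+)$ makes each summand vanish, and in particular $B_{kk} C_{kl} = 0$ forces $C_{kl} = 0$ while $B_{kl} C_{ll} = 0$ forces $B_{kl} = 0$. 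In short, a zero off-diagonal entry of $A$ kills the corresponding off-diagonal entries of both $B$ and $C$.

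Now I would specialize. In part~1, $A \in U_n(S)$ has $A_{kk} = 1$ for all $k$, so the diagonal comparison already gives $B_{kk} = 1$, i.e. $B \in U_n(S)$. In part~2, $A = M_i(s)$ has every off-diagonal entry zero, so $B$ is diagonal; on the diagonal $B_{kk} = 1$ for $k \neq i$, while $B_{ii} C_{ii} = s \neq 0$ gives $B_{ii} \in S^\bullet$, so $B = M_i(B_{ii})$ lies in the displayed set. In part~3, $A = I_n + s E_{ij}$ has all off-diagonal entries zero except the $(i,j)$ one, so $B_{kl} = 0$ for every $k < l$ with $(k,l) \neq (i,j)$, while the diagonal comparison gives $B_{kk} = 1$; hence $B = I_n + B_{ij} E_{ij}$ lies in the displayed set. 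Since that set is contained in $U_n(S)$, divisor-closedness in $U_n(S)$ follows from divisor-closedness in $T_n(S)^\bullet$.

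I do not expect a genuine obstacle here: the argument is bookkeeping once the right statement is isolated. The two points needing a little care are the auxiliary regularity claim — needed only so that the three sets sit inside $T_n(S)^\bullet$, and itself just a short induction — and the convention that the symbols $I_n + s E_{ij}$ and $I_n + (s-1) E_{ii}$ denote the matrices described rather than literal differences, the identities $E_{ij}^2 = 0$ and $E_{ii}^2 = E_{ii}$ being what actually make the isomorphism computations go through.
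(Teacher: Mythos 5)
Your proof is correct, and for parts~1 and~3 it follows essentially the same route as the paper: compare diagonal entries and use reducedness of $(S^\bullet,\cdot)$ to pin down the diagonals, then compare off-diagonal entries and use reducedness of $(S,+)$ to kill the off-diagonal entries of the factors. The one place where you genuinely diverge is part~2. The paper establishes that the factors of a diagonal matrix are diagonal by invoking the invariant $\Sigma(A) = \sum_{1 \le i < j \le n} \max \mathsf{L}_{S}(A_{ij})$ and the superadditivity $\Sigma(BB') \ge \Sigma(B) + \Sigma(B')$ from \cite[Lemma~3.4]{BS20}, whereas you deduce it directly from your single entrywise claim: a vanishing $(k,l)$ entry of the product forces every summand $B_{km}C_{ml}$ to vanish (reducedness of $(S,+)$), and the $m=k$ and $m=l$ terms then kill $C_{kl}$ and $B_{kl}$ because the diagonal entries are nonzero and $S$ has no nonzero zero-divisors. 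Your version is more self-contained and uniform --- one lemma specialized three ways --- and it avoids any implicit reliance on sets of lengths being well defined (the quantity $\max \mathsf{L}_S(A_{ij})$ presupposes something about factorizations in $(S,+)$ that your argument never needs). Your preliminary observation that matrices with regular diagonal entries are themselves regular, proved by downward induction on the coordinates, is also a point the paper leaves implicit; including it is harmless and makes the containment of the three sets in $T_n(S)^\bullet$ explicit.
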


\begin{proof}
1. Suppose that $U \in U_n(S)$ and write $U = BB'$ for some $B, B' \in T_n(S)^\bullet$. Because $S$ is reduced both matrices $B$ and $B'$ must have unit diagonal entries and so they belong to $U_n(S)$. As a result, $U_n(S)$ is a divisor-closed submonoid of $T_n(S)^\bullet$.
\smallskip
	
2. Fix $i \in \ldb 1,n \rdb$, and set $S_i = \{I_n + (s-1)E_{ii} : s \in S^\bullet \}$. Following the notation in~\cite{BS20}, we set $\Sigma(A) := \sum_{1 \le i < j \le n} \max \mathsf{L}_{S}(A_{ij})$ for each $A \in T_n(S)^\bullet$. It is clear that $\Sigma(A) = 0$ if and only if $A$ is a diagonal matrix. To verify that $S_i$ is a divisor-closed submonoid of $T_n(S)^\bullet$, take $A \in S_i$ and $B, B' \in T_n(S)^\bullet$ such that $A = BB'$. Since $\Sigma(A) = \Sigma(BB') \ge \Sigma(B) + \Sigma(B')$ by \cite[Lemma~3.4]{BS20}, both $B$ and $B'$ are diagonal matrices. Since $S$ is reduced and $A_{jj} = B_{jj}B'_{jj}$ for every $j \in \ldb 1,n \rdb$, we see that $B, B' \in S_i$. Thus, $S_i$ is a divisor-closed submonoid of $T_n(S)^\bullet$. It is clear that the assignment $s \mapsto I_n + (s-1)E_{ii}$ determines an isomorphism between $(S^\bullet, \cdot)$ and $S_i$.
\smallskip
	
3. Fix a pair $i,j \in \ldb 1,n \rdb$ with $i < j$, and set $S_{ij} := \{ I_n + sE_{ij} : s \in S \}$. Observe that for $x,y \in S$, the equality $(I_n + xE_{ij})(I_n + yE_{ij}) = I_n + (x+y)E_{ij}$ holds. Therefore the natural map $s \mapsto I_n + sE_{ij}$ gives the desired isomorphism. Note that as $S$ is a reduced information semialgebra, if $I_n + sE_{ij} = BB'$ for some $s \in S$ and $B,B' \in T_n(S)^\bullet$, then every diagonal entry of $B$ and $B'$ must be~$1$. Moreover, if $B_{rs} \neq 0$ for some $r < s$, then the entry of $BB'$ in position $(r,s)$ would be different from zero, a contradiction. Similarly,~$B'$ can only have a single nonzero entry off the diagonal, namely, in position $(i,j)$. Hence $S_{ij}$ is a divisor-closed submonoid of $T_n(S)^\bullet$, which implies that $S_{ij}$ is also a divisor-closed submonoid of $U_n(S)$.
\end{proof}

\smallskip

If $S$ is a reduced information semialgebra, then the divisor-closed submonoids of $T_n(S)^\bullet$ found in Proposition~\ref{prop:algebraic observation of T_n(S)} give us enough information to study the most relevant atomic properties of both $U_n(S)$ and $T_n(S)^\bullet$. The set of atoms of $T_n(S)^\bullet$ was fully described in~\cite[Theorem~2.1]{BS20} in terms of the atoms of $S$ as follows: $\mathcal{A}(T_n(S)^\bullet) = A_+ \cup A_\times$, where
\begin{align*} \label{eq:atoms of T_n(S)}
	A_+ &= \big\{ I_n + aE_{ij} : 1 \le i < j \le n \text{ and } a \in \mathcal{A}_+(S) \big\} \text{ and }\\
	A_\times &= \big\{ I_n + (a-1)E_{ii} : 1 \le i \le n \text{ and } a \in \mathcal{A}_\times(S^\bullet) \big\}.
\end{align*}
We call the elements of $A_+$ \emph{atoms of additive type} and the elements of $A_\times$ \emph{atoms of multiplicative type}. It was also shown in \cite[Theorem~2.1]{BS20} that $T_n(S)^\bullet$ is atomic provided that $S$ is bi-atomic. This result can be slightly extended as follows.

\smallskip

\begin{theorem} \label{thm:atomicity}
Let $S$ be a reduced information semialgebra.
\begin{enumerate}
	\item[1.] The following statements are equivalent.
	\begin{enumerate}
		\item[(a)] $U_n(S)$ is atomic for every $n \in \nn_{\ge 2}$.
		\smallskip
	
		\item[(b)] $U_n(S)$ is atomic for some $n \in \nn_{\ge 2}$.
		\smallskip
		
		\item[(c)] $(S,+)$ is atomic.
	\end{enumerate}

\smallskip
		
	\item[2.] The following statements are equivalent.
	\begin{enumerate}
		\item[(a)] $T_n(S)^\bullet$ is atomic for every $n \in \nn_{\ge 2}$.
		\smallskip
		
		\item[(b)] $T_n(S)^\bullet$ is atomic for some $n \in \nn_{\ge 2}$.
		\smallskip
		
		\item[(c)] $S$ is bi-atomic.
	\end{enumerate}
\end{enumerate}
\end{theorem}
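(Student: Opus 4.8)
The plan is to prove both three-way equivalences in parallel. In each part $(a)\Rightarrow(b)$ is trivial, and the implication $(c)\Rightarrow(a)$ of part~2 is exactly \cite[Theorem~2.1]{BS20}, so it may be quoted. All remaining implications rest on the fact that a divisor-closed submonoid of an atomic monoid is itself atomic (so that its atoms are precisely the atoms of the ambient monoid that lie in it), together with the divisor-closed submonoids isolated in Proposition~\ref{prop:algebraic observation of T_n(S)}.

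For $(b)\Rightarrow(c)$, fix $n\ge 2$ for which the relevant monoid is atomic. In part~1, Proposition~\ref{prop:algebraic observation of T_n(S)}(3) exhibits $\{I_n+sE_{12}:s\in S\}\cong(S,+)$ as a divisor-closed submonoid of $U_n(S)$, whence $(S,+)$ is atomic. In part~2, that same submonoid is divisor-closed in $T_n(S)^\bullet$, and Proposition~\ref{prop:algebraic observation of T_n(S)}(2) exhibits $\{I_n+(s-1)E_{11}:s\in S^\bullet\}\cong(S^\bullet,\cdot)$ as a divisor-closed submonoid of $T_n(S)^\bullet$; hence both $(S,+)$ and $(S^\bullet,\cdot)$ are atomic, i.e.\ $S$ is bi-atomic.

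The substantive point is $(c)\Rightarrow(a)$ of part~1: assuming only that $(S,+)$ is atomic (so $T_n(S)^\bullet$ need not be atomic and a divisor-closedness argument is unavailable), one must show $U_n(S)$ is atomic for every $n\ge 2$. First I would show that every $A\in U_n(S)$ is a finite product of elementary matrices $I_n+sE_{ij}$ with $i<j$ and $s\in S$, by induction on $n$: writing $A$ in block form as $\left(\begin{smallmatrix}A'&v\\0&1\end{smallmatrix}\right)$ with $A'\in U_{n-1}(S)$, one has the subtraction-free identity $A=\left(\begin{smallmatrix}I_{n-1}&v\\0&1\end{smallmatrix}\right)\left(\begin{smallmatrix}A'&0\\0&1\end{smallmatrix}\right)$, where the first factor equals $\prod_{i=1}^{n-1}(I_n+v_iE_{in})$ (these commute since $E_{in}E_{jn}=0$ for $i,j<n$) and the second is a product of elementary matrices by the inductive hypothesis (padding the $U_{n-1}(S)$-factors of $A'$). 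Then, for each elementary matrix $I_n+sE_{ij}$ with $s\neq 0$, atomicity of $(S,+)$ gives $s=a_1+\cdots+a_m$ with every $a_\ell\in\mathcal{A}_+(S)$, and since $E_{ij}^2=0$ one gets $I_n+sE_{ij}=\prod_{\ell=1}^m(I_n+a_\ell E_{ij})$. Each $I_n+a_\ell E_{ij}$ is an atom of $U_n(S)$---because $\{I_n+sE_{ij}:s\in S\}$ is divisor-closed in $U_n(S)$ and isomorphic to $(S,+)$, or alternatively by the description of $\mathcal{A}(T_n(S)^\bullet)$ recalled above together with Proposition~\ref{prop:algebraic observation of T_n(S)}(1). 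Hence every non-identity element of $U_n(S)$ factors into atoms, so $U_n(S)$ is atomic.

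The main obstacle is precisely this reduction of an arbitrary unit triangular matrix to a product of elementary matrices: the familiar Gaussian-elimination argument over a field or a ring uses subtraction, which $S$ does not provide, so the block-triangular ``peeling'' identity above is what makes the argument go through without ever subtracting. Everything else is routine bookkeeping with the divisor-closed submonoids of Proposition~\ref{prop:algebraic observation of T_n(S)} and the recalled atom description, plus the elementary observations that divisor-closed submonoids inherit atomicity and that $E_{ij}E_{kl}$ vanishes for the index patterns occurring here.
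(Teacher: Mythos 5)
Your proposal is correct and follows essentially the same route as the paper: trivial $(a)\Rightarrow(b)$, the divisor-closed submonoids of Proposition~\ref{prop:algebraic observation of T_n(S)} for $(b)\Rightarrow(c)$ in both parts, and the citation of \cite[Theorem~2.1]{BS20} for $(c)\Rightarrow(a)$ of part~2. The only divergence is that for $(c)\Rightarrow(a)$ of part~1 the paper simply asserts that $U_n(S)$ is generated by the atoms of additive type, whereas you supply the explicit subtraction-free block-peeling decomposition justifying that claim --- a welcome addition, and correct.
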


\begin{proof}
1. (a) $\Rightarrow$ (b): This is clear.
\smallskip
	
(b) $\Rightarrow$ (c): By part~3 of Proposition~\ref{prop:algebraic observation of T_n(S)}, it follows that $\{ I_n + sE_{12} : s \in S \}$ is a divisor-closed submonoid of $U_n(S)$ isomorphic to $(S,+)$. Thus, $(S,+)$ is atomic when $U_n(S)$ is atomic.
\smallskip
	
(c) $\Rightarrow$ (a): Fix $n \in \nn_{\ge 2}$. By part~1 of Proposition~\ref{prop:algebraic observation of T_n(S)}, the monoid $U_n(S)$ is a divisor-closed submonoid of $T_n(S)^\bullet$. Then clearly $\mathcal{A}(U_n(S)) = U_n(S) \cap \mathcal{A}(T_n(S)^\bullet)$, which is precisely the set $A_+$ of atoms of additive type of $T_n(S)^\bullet$. As $(S,+)$ is atomic, $U_n(S)$ can be generated by $A_+$. Thus, $U_n(S)$ is atomic.
\smallskip
	
2. (a) $\Rightarrow$ (b): This is clear.
\smallskip
	
(b) $\Rightarrow$ (c): By parts~2 and~3 of Proposition~\ref{prop:algebraic observation of T_n(S)}, it follows that $\{ I_n + (s-1)E_{11} : s \in S^\bullet \}$ and $\{ I_n + sE_{12} : s \in S \}$ are divisor-closed submonoids of $T_n(S)^\bullet$ isomorphic to $(S^\bullet, \cdot)$ and $(S,+)$, respectively. Hence $S$ is bi-atomic provided that $T_n(S)^\bullet$ is atomic.
\smallskip
	
(c) $\Rightarrow$ (a): See~\cite[Theorem~2.1]{BS20}.
\end{proof}

\smallskip

At the opposite end of the spectrum from atomic monoids is that of monoids that are antimatter. Since the set of atoms of $T_n(S)^\bullet$ is parameterized by the sets of atoms of $(S,+)$ and $(S,\cdot)$, the next observation immediately follows.

\smallskip

\begin{remark}
 For every $n \in \nn_{\ge 2}$ and each information semialgebra $S$, the noncommutative monoid $T_n(S)^\bullet$ is antimatter if and only if both monoids $(S,+)$ and $(S, \cdot)$ are antimatter.
\end{remark}

\medskip

\subsection{The ACCP} Recall that a monoid that satisfies both the ACCP on left ideals and the ACCP on right ideals is atomic~\cite[Proposition~3.1]{dS13}. The converse is not true, even in the context of commutative monoids. For instance, the Puiseux monoid $\big\langle \frac{1}{2^p p} : p \in \pp_{\ge 2} \big\rangle$ is an atomic monoid that does not satisfy the ACCP. We proceed to study how, for each $n \in \nn_{\ge 2}$, the ACCP transfers between~$S$ and the monoids $U_n(S)$ and $T_n(S)^\bullet$. We first show that the ACCP can be transferred back and forth between $(S,+)$ and $U_n(S)$.

\smallskip

\begin{theorem} \label{thm:ACCP characterization for U_n(S)}
Let $S$ be a reduced information semialgebra. The following statements are equivalent.
\begin{enumerate}
	\item[(a)] $(S,+)$ is an ACCP monoid.
	\smallskip

	\item[(b)] $U_n(S)$ satisfies the right ACCP for every $n \in \nn_{\ge 2}$.
	\smallskip

	\item[(c)] $U_n(S)$ satisfies the left ACCP for every $n \in \nn_{\ge 2}$.
\end{enumerate}
\end{theorem}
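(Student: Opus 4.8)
The plan is to prove the two ``easy'' implications $(b)\Rightarrow(a)$ and $(c)\Rightarrow(a)$ by invoking a divisor-closed submonoid, and then to obtain $(a)\Rightarrow(b)$ and $(a)\Rightarrow(c)$ by an induction on the distance from the main diagonal. For the easy direction, recall from part~3 of Proposition~\ref{prop:algebraic observation of T_n(S)} that $\{I_n + sE_{12} : s \in S\}$ is a divisor-closed submonoid of $U_n(S)$ isomorphic, as a monoid, to $(S,+)$. Since a divisor-closed submonoid $N$ of a monoid $M$ satisfying the right (resp., left) ACCP again satisfies the right (resp., left) ACCP --- given a strictly ascending chain of principal right ideals in $N$ one lifts it to an ascending chain in $M$, and divisor-closedness together with cancellativity forces strictness to be preserved --- the right or left ACCP for $U_2(S)$ yields the ACCP for the commutative monoid $(S,+)$. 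This establishes $(b)\Rightarrow(a)$ and $(c)\Rightarrow(a)$.

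For $(a)\Rightarrow(b)$, fix $n\ge 2$ and a chain of principal right ideals $A_1 U_n(S)\subseteq A_2 U_n(S)\subseteq\cdots$, so that $A_k=A_{k+1}B_k$ for some $B_k\in U_n(S)$. The crucial elementary computation is that, because all matrices involved are unit triangular, for $i<j$ one has
\[
	(A_k)_{ij}=(A_{k+1})_{ij}+(B_k)_{ij}+\sum_{i<m<j}(A_{k+1})_{im}(B_k)_{mj}.
\]
I will prove by induction on $d\in\ldb 1,n-1\rdb$ that there is an index $N_d$ such that, for every $k\ge N_d$ and every pair $i<j$ with $j-i\le d$, one has $(A_{k+1})_{ij}=(A_k)_{ij}$ and $(B_k)_{ij}=0$. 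For $d=1$ the displayed sum is empty, so $(A_k)_{i,i+1}=(A_{k+1})_{i,i+1}+(B_k)_{i,i+1}$; hence $\bigl((A_k)_{i,i+1}+S\bigr)_{k\ge 1}$ is an ascending chain of principal ideals of $(S,+)$, which stabilizes by $(a)$, and since $(S,+)$ is reduced, a sum that equals an earlier term of the chain forces $(B_k)_{i,i+1}=0$ for $k$ large (and equality of the entries). Taking $N_1$ to be the maximum of the finitely many resulting thresholds over all $i$ finishes the base case. For the inductive step, when $j-i=d+1$ each middle term satisfies $j-m\le d$, so by the inductive hypothesis $(B_k)_{mj}=0$ for $k\ge N_d$; thus the identity collapses to $(A_k)_{ij}=(A_{k+1})_{ij}+(B_k)_{ij}$ for $k\ge N_d$, and the same ACCP-plus-reducedness argument applied to $\bigl((A_k)_{ij}+S\bigr)_{k\ge N_d}$ produces the required $N_{d+1}\ge N_d$. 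Taking $d=n-1$ gives $N:=N_{n-1}$ with $B_k=I_n$, hence $A_k=A_{k+1}$, for all $k\ge N$, so the chain stabilizes. The implication $(a)\Rightarrow(c)$ is entirely symmetric: one writes $A_k=B_kA_{k+1}$ and uses $(B_kA_{k+1})_{ij}=(B_k)_{ij}+(A_{k+1})_{ij}+\sum_{i<m<j}(B_k)_{im}(A_{k+1})_{mj}$, where the middle terms now vanish because $m-i\le d$.

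I expect the only delicate point to be the bookkeeping in the induction: one must propagate \emph{simultaneously} the stabilization of the entries of the matrices $A_k$ and the vanishing of the corresponding entries of the auxiliary matrices $B_k$, since it is precisely the latter that kills the mixed terms of the matrix product at the next level of the induction. Everything else --- the product formula for unit triangular matrices, the reduction of each ``one diagonal at a time'' step to a chain of principal ideals in $(S,+)$, and the passage to the divisor-closed submonoid for the converse --- is routine.
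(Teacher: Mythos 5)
Your proposal is correct and follows essentially the same route as the paper: the easy implications via the divisor-closed copy of $(S,+)$ from part~3 of Proposition~\ref{prop:algebraic observation of T_n(S)}, and the hard direction by inducting outward along the superdiagonals, reducing each level to an ascending chain of principal ideals in the reduced monoid $(S,+)$ so that the corresponding entries of the quotient matrices $B_k$ must eventually vanish. The bookkeeping you flag as delicate is exactly what the paper's induction does, and your symmetric treatment of the left ACCP matches the paper's remark that the left-ideal case is nearly identical.
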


\begin{proof}
\text{[(b) or (c)]} $\Rightarrow$ (a): It follows from part~3 of Proposition~\ref{prop:algebraic observation of T_n(S)} that $(S,+)$ is isomorphic to a divisor-closed submonoid of $U_n(S)$. As a result, if $U_n(S)$ satisfies either the left ACCP or the right ACCP, then $(S,+)$ must satisfy the ACCP.
\smallskip
	
(a) $\Rightarrow$ [(b) and (c)]: Suppose that $(S,+)$ satisfies the ACCP, and let $(A_k \, U_n(S))_{k \in \nn_0}$ be an ascending chain of principal right ideals of $U_n(S)$. Then there exists a sequence $(B_k)_{k \in \nn}$ of matrices in $U_n(S)$ such that $A_k = A_{k+1}B_{k+1}$ for every $k \in \nn_0$. We will find a finite increasing sequence $(k_t)_{t \in \ldb 1,n-1 \rdb}$ of positive integers such that $[B_k]_{j(j+t)} = 0$ for each $j \in \ldb 1, n-t \rdb$ and $k \ge k_t$. In particular, we will obtain that $B_k = I_n$ for every $k \ge k_{n-1}$.

We proceed inductively. Observe that for each $j \in \ldb 1,n-1 \rdb$ and $k \in \nn_0$,
\[
	[A_k]_{j(j+1)} = [B_{k+1}]_{j(j+1)} + [A_{k+1}]_{j(j+1)} = [B_{k+1}]_{j(j+1)} + [A_{k+1}]_{j(j+1)}.
\]
Then, for each $j \in \ldb 1,n-1 \rdb$, the sequence $( [A_k]_{j(j+1)} + S)_{k \in \nn_0}$ is an ascending chain of principal ideals of $(S,+)$ and must therefore stabilize. As $(S,+)$ is a reduced monoid, there exists $k_1 \in \nn$ such that $[B_k]_{j(j+1)} = 0$ for every $k \ge k_1$ and $j \in \ldb 1, n-1 \rdb$. Thus, the matrix $B_k$ has only $0$s on its superdiagonal when $k \ge k_1$.
	
For the inductive step, suppose that for $t \in \ldb 1,n-2 \rdb$ we have constructed a sequence $(k_i)_{i \in \ldb 1,t \rdb}$ satisfying the desired properties. In particular, for each $k \ge k_t$, $l \in \ldb 1,t \rdb$, and $j \in \ldb 1, n-l \rdb$, the equality $[B_k]_{j(j+l)} = 0$ holds and, as a result,
\[
	[A_k]_{j(j+t+1)} = \sum_{i=1}^n [A_{k+1}]_{ji}[B_{k+1}]_{i(j+t+1)} = [B_{k+1}]_{j(j+t+1)} + [A_{k+1}]_{j(j+t+1)}.
\]
Then, for each $j \in \ldb 1,n-t-1 \rdb$, the sequence $\left([A_k]_{j(j+t+1)} + S \right)_{k \ge k_t}$ is an ascending chain of principal ideals of $(S,+)$, which must stabilize. Thus, there exists $k_{t+1} \ge k_t$ such that $[B_{k+1}]_{j(j+t+1)} = 0$ for each $j \in \ldb 1,n-t-1 \rdb$ and $k \ge k_{t+1}$. As a result, $B_k = I_n$ for every $k \ge k_n$. Hence the ascending chain $(A_k U_n(S))_{k \in \nn_0}$ of principal right ideals of $U_n(S)$ must stabilize and (b) holds.
\smallskip
	
The proof that every ascending chain of principal left ideals of $U_n(S)$ stabilizes is almost identical, which concludes the proof.
\end{proof}

\smallskip

Without appealing to~\cite[Proposition~3.1]{dS13}, Theorem~\ref{thm:ACCP characterization for U_n(S)} allows us to deduce that if a monoid of unit triangular matrices over a reduced information semialgebra satisfies the ACCP, then it is atomic. We highlight this implication in the following corollary.

\smallskip

\begin{cor} \label{cor:ACCP implies atomicity for U_n(S)}
Let $S$ be a reduced information semialgebra. If $U_n(S)$ satisfies the ACCP, then $U_n(S)$ is atomic.
\end{cor}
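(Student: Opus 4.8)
The plan is to avoid the two-sided ACCP criterion of~\cite[Proposition~3.1]{dS13} entirely, and instead to push a one-sided chain condition down to the commutative monoid $(S,+)$ — where the implication ACCP $\Rightarrow$ atomic is classical — and then to push atomicity back up through Theorem~\ref{thm:atomicity}.

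First I would fix $n \in \nn_{\ge 2}$ and assume that $U_n(S)$ satisfies the ACCP; in particular it satisfies the right ACCP (the left ACCP would serve just as well). By the implication $[\text{(b) or (c)}] \Rightarrow \text{(a)}$ of Theorem~\ref{thm:ACCP characterization for U_n(S)} — whose proof only uses that $(S,+)$ embeds into $U_n(S)$ as a divisor-closed submonoid via part~3 of Proposition~\ref{prop:algebraic observation of T_n(S)}, so that a single value of $n$ suffices — we conclude that $(S,+)$ satisfies the ACCP.

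Next I would record the standard fact that a reduced commutative cancellative monoid satisfying the ACCP is atomic: if some nonzero nonunit $x$ of $(S,+)$ were not a sum of atoms, then $x$ is not itself an atom, so $x = x_1 + y_1$ with $x_1, y_1$ nonzero and with (say) $x_1$ again not a sum of atoms; since $(S,+)$ is reduced one has $x + S \subsetneq x_1 + S$, and iterating yields a strictly ascending chain of principal ideals, contradicting the ACCP. Hence $(S,+)$ is atomic.

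Finally, the implication $\text{(c)} \Rightarrow \text{(a)}$ of part~1 of Theorem~\ref{thm:atomicity} gives that $U_n(S)$ is atomic for every $n \in \nn_{\ge 2}$, and in particular for the $n$ we fixed. I do not expect any real obstacle here: all of the substantive work is already contained in Theorems~\ref{thm:ACCP characterization for U_n(S)} and~\ref{thm:atomicity}, and the only point worth emphasizing is that the characterization in Theorem~\ref{thm:ACCP characterization for U_n(S)} is precisely what lets us sidestep~\cite[Proposition~3.1]{dS13}, by relocating the argument from the noncommutative monoid $U_n(S)$ to the commutative monoid $(S,+)$, where ACCP $\Rightarrow$ atomic is elementary.
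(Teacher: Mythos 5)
Your argument is correct and is exactly the one the paper intends when it states this corollary as an immediate consequence: Theorem~\ref{thm:ACCP characterization for U_n(S)} transports the chain condition down to the divisor-closed copy of $(S,+)$ inside $U_n(S)$, where ACCP $\Rightarrow$ atomic is the classical commutative fact, and part~1 of Theorem~\ref{thm:atomicity} lifts atomicity back to $U_n(S)$. Your remark that only a single value of $n$ is needed in the downward step is accurate and matches the paper's proof of that implication.
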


\smallskip

Putting together Theorem~\ref{thm:ACCP characterization for U_n(S)} and Corollary~\ref{cor:ACCP implies atomicity for U_n(S)}, we obtain the next diagram of implications for each reduced information semialgebra $S$ and for every $n \in \nn_{\ge 2}$.
\begin{equation} \label{diag:from ACCP to atomicity for U_n(S)}
	\begin{tikzcd}
		&  (S,+) \text{ is \textbf{ACCP}} \arrow[r, Rightarrow] \arrow[d, Leftrightarrow] & (S,+) \text{ is \textbf{atomic}} \arrow[d, Leftrightarrow] \\ & U_n(S) \text{ is \textbf{ACCP}} \arrow[r, Rightarrow] & U_n(S) \text{ is \textbf{atomic}}
	\end{tikzcd}
\end{equation}

\smallskip

We now consider the monoid $T_n(S)^\bullet$.

\smallskip

\begin{theorem} \label{thm:ACCP characterization}
Let $S$ be a reduced information semialgebra. The following statements are equivalent.
\begin{enumerate}
	\item[(a)] $S$ is a bi-ACCP monoid.
	\smallskip

	\item[(b)] $T_n(S)^\bullet$ satisfies the right ACCP for every $n \in \nn$.
	\smallskip

	\item[(c)] $T_n(S)^\bullet$ satisfies the left ACCP for every $n \in \nn$.
\end{enumerate}
\end{theorem}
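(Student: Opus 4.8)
The plan is to mimic the proof of Theorem~\ref{thm:ACCP characterization for U_n(S)}, upgrading the induction to also control the diagonal entries, and to extract the converse directions from the divisor-closed submonoids produced by Proposition~\ref{prop:algebraic observation of T_n(S)}. For the implications \text{[(b) or (c)]} $\Rightarrow$ (a), I would argue exactly as in the unit-triangular case: by parts~2 and~3 of Proposition~\ref{prop:algebraic observation of T_n(S)}, both $(S^\bullet,\cdot)$ and $(S,+)$ are isomorphic to divisor-closed submonoids of $T_n(S)^\bullet$ (for any fixed $n\ge 2$, via $s\mapsto I_n+(s-1)E_{11}$ and $s\mapsto I_n+sE_{12}$). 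Since the ACCP passes to divisor-closed submonoids — an ascending chain of principal ideals in the submonoid is an ascending chain of principal ideals in the big monoid, using divisor-closedness to stay inside the submonoid — we conclude that if $T_n(S)^\bullet$ satisfies the left or right ACCP for some $n\ge 2$, then $S$ is bi-ACCP. The case $n=1$ is vacuous since $T_1(S)^\bullet=(S^\bullet,\cdot)$.

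The main work is (a) $\Rightarrow$ [(b) and (c)]. Fix $n$ and an ascending chain $(A_k\,T_n(S)^\bullet)_{k\in\nn_0}$ of principal right ideals, so $A_k=A_{k+1}B_{k+1}$ with $B_{k+1}\in T_n(S)^\bullet$. Unlike the unit-triangular case the diagonal entries are no longer forced to be units, so the first step is to handle the diagonal: for each $i\in\ldb 1,n\rdb$, the identity $[A_k]_{ii}=[A_{k+1}]_{ii}[B_{k+1}]_{ii}$ in $(S^\bullet,\cdot)$ shows that $([A_k]_{ii}S^\bullet)_{k}$ is an ascending chain of principal ideals of the multiplicative monoid, which stabilizes because $(S^\bullet,\cdot)$ is ACCP; as $(S^\bullet,\cdot)$ is reduced there is $k_0$ with $[B_k]_{ii}=1$ for all $i$ and all $k\ge k_0$. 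From index $k_0$ on, the matrices $B_k$ are unit triangular, so the superdiagonal-by-superdiagonal induction of Theorem~\ref{thm:ACCP characterization for U_n(S)} applies verbatim: having arranged $[B_k]_{j(j+l)}=0$ for all $l\le t$ and $k\ge k_t$, the expansion $[A_k]_{j(j+t+1)}=[B_{k+1}]_{j(j+t+1)}+[A_{k+1}]_{j(j+t+1)}$ (all cross terms vanish by the inductive hypothesis) exhibits an ascending chain of principal ideals of $(S,+)$, which stabilizes, forcing $[B_k]_{j(j+t+1)}=0$ for large $k$. After finitely many steps $B_k=I_n$ for all large $k$, so $A_k T_n(S)^\bullet$ stabilizes; thus (b) holds, and (c) is obtained by the symmetric argument with principal left ideals and $A_k=B_{k+1}A_{k+1}$. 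One subtlety to check: when multiplying out $A_{k+1}B_{k+1}$ the relevant partial sums really do telescope to $[B_{k+1}]_{jm}+[A_{k+1}]_{jm}$ once the lower-order off-diagonal entries of $B_{k+1}$ and, for the diagonal, the diagonal entries are units — the reducedness of $(S^\bullet,\cdot)$ is what lets one cancel $[A_{k+1}]_{ii}$ on both sides in the diagonal step.

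The step I expect to be the main obstacle is making the diagonal reduction and the off-diagonal induction interact cleanly: one must be careful that the cross terms $\sum_{i}[A_{k+1}]_{ji}[B_{k+1}]_{i(j+t+1)}$ collapse correctly, which uses both that $B_{k+1}$ has already been shown to have $1$'s on the diagonal and $0$'s on the first $t$ off-diagonals, and that $A_{k+1}$ has unit diagonal entries from stage $k_0$ onward; the bookkeeping of the thresholds $k_0\le k_1\le\cdots\le k_{n-1}$ and verifying that the chain $([A_k]_{j(j+t+1)}+S)_k$ is genuinely ascending (not merely eventually constant) is the place where a careless argument could go wrong. Everything else is a routine adaptation of the proof of Theorem~\ref{thm:ACCP characterization for U_n(S)}.
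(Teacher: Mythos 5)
Your plan follows the paper's proof essentially verbatim: the converse directions via the divisor-closed submonoids of Proposition~\ref{prop:algebraic observation of T_n(S)}, and the forward direction by first using the multiplicative ACCP on the diagonal chains $([A_k]_{jj}S^\bullet)_k$ to force $[B_k]_{jj}=1$ for $k\ge k_0$, then running the superdiagonal-by-superdiagonal induction of Theorem~\ref{thm:ACCP characterization for U_n(S)}.

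One claim in your write-up is false, though it is easily repaired. You assert that ``$A_{k+1}$ has unit diagonal entries from stage $k_0$ onward'' and accordingly write the expansion as $[A_k]_{j(j+t+1)}=[B_{k+1}]_{j(j+t+1)}+[A_{k+1}]_{j(j+t+1)}$. Only the $B_k$ acquire unit diagonal entries; the stabilization of $[A_k]_{jj}S^\bullet$ says nothing about $[A_k]_{jj}$ itself being $1$. The correct identity (as in the paper) is
\[
[A_k]_{j(j+t+1)}=[A_{k+1}]_{jj}[B_{k+1}]_{j(j+t+1)}+[A_{k+1}]_{j(j+t+1)},
\]
which still exhibits $[A_{k+1}]_{j(j+t+1)}$ as an additive divisor of $[A_k]_{j(j+t+1)}$, so the chain of principal ideals of $(S,+)$ ascends and stabilizes; reducedness of $(S,+)$ then gives $[A_{k+1}]_{jj}[B_{k+1}]_{j(j+t+1)}=0$, and since $[A_{k+1}]_{jj}\in S^\bullet$ and $S$ has no nonzero zero-divisors, this forces $[B_{k+1}]_{j(j+t+1)}=0$. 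With that correction the argument is exactly the paper's.
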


\begin{proof}
\text{[(b) or (c)]} $\Rightarrow$ (a): It follows from Proposition~\ref{prop:algebraic observation of T_n(S)} that both $(S,+)$ and $(S^\bullet, \cdot)$ are isomorphic to divisor-closed submonoids of $T_n(S)^\bullet$. Thus, if $T_n(S)^\bullet$ satisfies the right ACCP (or the left ACCP), then both monoids $(S,+)$ and $(S^\bullet, \cdot)$ satisfy the ACCP.
\smallskip
	
(a) $\Rightarrow$ [(b) and (c)]: Suppose now that both $(S,+)$ and $(S^\bullet, \cdot)$ satisfy the ACCP. Let $(A_k \, T_n(S)^\bullet)_{k \in \nn_0}$ be an ascending chain of principal right ideals of $T_n(S)^\bullet$. Then there exists a sequence $(B_k)_{k \in \nn}$ of matrices in $T_n(S)^\bullet$ such that $A_k = A_{k+1}B_{k+1}$ for every $k \in \nn_0$. Then, for each $j \in \ldb 1,n \rdb$, one obtains that $[A_k]_{jj} = [A_{k+1}]_{jj} [B_{k+1}]_{jj}$ for every $k \in \nn_0$. This gives rise to an ascending chain $( [A_k]_{jj} S^\bullet )_{k \in \mathbb N}$ of principal ideal of $(S^\bullet, \cdot)$. By hypothesis, these chains must stabilize. As $S$ is reduced, there exists $k_0 \in \nn$ such that for each $j \in \ldb 1,n \rdb$, we see that $[B_k]_{jj} = 1$ for every $k \ge k_0$. Thus, $B_k$ is a unit triangular matrix for every $k \ge k_0$.
\smallskip
	
The second part of the proof is similar to the proof of the implication (a) $\Rightarrow$ [(b) and (c)] of Theorem~\ref{thm:ACCP characterization for U_n(S)} and so we will only provide a brief sketch. For each $j \in \ldb 1,n-1 \rdb$ and $k \in \nn_0$,
\[
	[A_k]_{j(j+1)} = [A_{k+1}]_{jj}[B_{k+1}]_{j(j+1)} + [A_{k+1}]_{j(j+1)}[B_{k+1}]_{(j+1)(j+1)} = [A_{k+1}]_{jj}[B_{k+1}]_{j(j+1)} + [A_{k+1}]_{j(j+1)}.
\]
Then, for each $j \in \ldb 1,n-1 \rdb$, the sequence $( [A_k]_{j(j+1)} + S)_{k \in \nn_0}$ is an ascending chain of principal ideals of $(S,+)$. By hypothesis, this chain must stabilize. Thus, there exists $k_1 \ge k_0$ such that $[B_{k+1}]_{j(j+1)} = 0$ for each $j \in \ldb 1,n-1 \rdb$ and $k \ge k_1$. Suppose that for $t \in \ldb 1,n-2 \rdb$ there exists $k_t \in \nn$ with $k_t \ge k_0$ such that for each $k \ge k_t$, $l \in \ldb 1,t \rdb$, and $j \in \ldb 1, n-l \rdb$, the equality $[B_k]_{j(j+l)} = 0$ holds. Then
\[
	[A_k]_{j(j+t+1)} = \sum_{i=1}^n [A_{k+1}]_{ji}[B_{k+1}]_{i(j+t+1)} = [A_{k+1}]_{jj}[B_{k+1}]_{j(j+t+1)} + [A_{k+1}]_{j(j+t+1)}.
\]
Now, for each $j \in \ldb 1,n-t-1 \rdb$, the sequence $\left([A_k]_{j(j+t+1)} + S \right)_{k \ge k_t}$ is an ascending chain of principal ideals of $(S,+)$ and must stabilize. Thus, there exists $k_{t+1} \ge k_t$ such that $[B_{k+1}]_{j(j+t+1)} = 0$ for each $j \in \ldb 1,n-t-1 \rdb$ and $k \ge k_{t+1}$. Inductively, we will obtain $k_n \in \nn$ with $k_n \ge k_0$ such that $B_k = I_n$ for every $k \ge k_n$. As a consequence, the chain of ideals $(A_k T_n(S)^\bullet)_{k \in \nn_0}$ stabilizes, and so $T_n(S)^\bullet$ satisfies the right ACCP. In a similar way one can show that $T_n(S)^\bullet$ satisfies the left ACCP.
\end{proof}

\smallskip

We immediately obtain the following corollary.

\smallskip

\begin{cor} \label{cor:ACCP implies atomicity}
Let $S$ be a reduced information semialgebra. If $T_n(S)^\bullet$ satisfies the ACCP, then $T_n(S)^\bullet$ is atomic.
\end{cor}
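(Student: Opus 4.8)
The plan is to obtain this exactly as Corollary~\ref{cor:ACCP implies atomicity for U_n(S)} was obtained from Theorem~\ref{thm:ACCP characterization for U_n(S)}: peel the ACCP off $T_n(S)^\bullet$ down to the coordinate semialgebra, apply the classical commutative implication ACCP $\Rightarrow$ atomic there, and then lift atomicity back up via Theorem~\ref{thm:atomicity}. Concretely, assuming that $T_n(S)^\bullet$ satisfies the ACCP, I would first invoke Theorem~\ref{thm:ACCP characterization} to conclude that $S$ is a bi-ACCP monoid; that is, both $(S,+)$ and $(S^\bullet,\cdot)$ satisfy the ACCP.

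Next, I would use the standard fact that a commutative cancellative monoid satisfying the ACCP is atomic, which gives at once that $(S,+)$ and $(S^\bullet,\cdot)$ are both atomic, i.e., that $S$ is bi-atomic. Part~2 of Theorem~\ref{thm:atomicity} then applies verbatim and yields that $T_n(S)^\bullet$ is atomic, which is the claim.

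There is essentially nothing hard here --- it is a genuine corollary --- but the one point I would handle with care is reconciling the hypothesis ``$T_n(S)^\bullet$ satisfies the ACCP'' with the one-sided conditions that appear in Theorem~\ref{thm:ACCP characterization}. The cleanest way to sidestep any such ambiguity is to argue as in the proof of Theorem~\ref{thm:ACCP characterization}: by parts~2 and~3 of Proposition~\ref{prop:algebraic observation of T_n(S)}, both $(S^\bullet,\cdot)$ and $(S,+)$ embed in $T_n(S)^\bullet$ as divisor-closed submonoids, and since these submonoids are commutative and (as $S$ is reduced) have trivial unit groups, any ascending chain of principal ideals in them lifts to an ascending chain of principal one-sided ideals of $T_n(S)^\bullet$, whose eventual stabilization forces stabilization of the original chain. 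This makes the passage from the ACCP of $T_n(S)^\bullet$ to the bi-ACCP of $S$ completely transparent, and everything else is bookkeeping.
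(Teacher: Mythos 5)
Your argument is exactly the paper's intended one: the paper derives this corollary ``immediately'' by chaining Theorem~\ref{thm:ACCP characterization} (ACCP for $T_n(S)^\bullet$ $\Rightarrow$ $S$ is bi-ACCP), the classical fact that commutative cancellative ACCP monoids are atomic, and part~2 of Theorem~\ref{thm:atomicity}, which is precisely your route. Your extra care about reconciling the two-sided hypothesis with the one-sided conditions in Theorem~\ref{thm:ACCP characterization} via the divisor-closed submonoids of Proposition~\ref{prop:algebraic observation of T_n(S)} is a sensible refinement but does not change the approach.
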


\smallskip

Putting together Theorem~\ref{thm:atomicity}, Theorem~\ref{thm:ACCP characterization}, and Corollary~\ref{cor:ACCP implies atomicity}, we obtain the following diagram for each reduced information semialgebra $S$ and for every $n \in \nn_{\ge 2}$.
\begin{equation} \label{diag:from ACCP to atomicity}
	\begin{tikzcd}
		&  S \text{ is \textbf{bi-ACCP}} \arrow[r, Rightarrow] \arrow[d, Leftrightarrow] & S \text{ is \textbf{bi-atomic}} \arrow[d, Leftrightarrow] \\
		& T_n(S)^\bullet \text{ is \textbf{ACCP}} \arrow[r, Rightarrow] & T_n(S)^\bullet \text{ is \textbf{atomic}}
	\end{tikzcd}
\end{equation}

\smallskip

We conclude this subsection by constructing an information semialgebra $S$ such that for every $n \in \nn$ the monoids $U_n(S)$ and $T_n(S)^\bullet$ are atomic but do not satisfy the ACCP. This illustrates that the horizontal implications in Diagram~(\ref{diag:from ACCP to atomicity}) are not, in general, an equivalence.

\smallskip

\begin{example}
Let $r \in (0,1) \cap \qq$ such that $\mathsf{n}(r) \neq 1$ and $\mathsf{d}(r) \in \pp$ and consider the cyclic Puiseux information semialgebra~$S_r$. It was shown in Proposition~\ref{prop:atomicity of cyclic PIS} that $S_r$ is a bi-atomic monoid. Thus, it follows from Theorem~\ref{thm:atomicity} that for every $n \in \nn_{\ge 2}$ the monoids $U_n(S_r)$ and $T_n(S_r)^\bullet$ are atomic. Because of Theorem~\ref{thm:ACCP characterization for U_n(S)} and Theorem~\ref{thm:ACCP characterization}, showing that none of the monoids $U_n(S_r)$ and $T_n(S_r)^\bullet$ (for every $n \in \nn_2$) satisfies the ACCP amounts to arguing that $S_r$ is not a bi-ACCP monoid. To do so we shall verify that $(S_r,+)$ does not satisfy the ACCP. Consider the sequence of principal ideals $(\mathsf{n}(r)r^n + S_r)_{n \in \nn_0}$. For each $n \in \nn_0$, notice that
\[
	\mathsf{n}(r)r^n = \mathsf{d}(r)r^{n+1} = (\mathsf{d}(r) - \mathsf{n}(r))r^{n+1} + \mathsf{n}(r)r^{n+1},
\]
which implies that $\mathsf{n}(r)r^n + S_r \subseteq \mathsf{n}(r)r^{n+1} + S_r$. Therefore $(\mathsf{n}(r)r^n + S_r)_{n \in \nn_0}$ is an ascending chain of principal ideals. Since the sequence $(\mathsf{n}(r)r^n)_{n \in \nn_0}$ strictly decreases to zero, the chain of principal ideals $(\mathsf{n}(r)r^n + S_r)_{n \in \nn_0}$ does not stabilize. As a consequence, $S_r$ fails to satisfy the ACCP.
\end{example}

\medskip

\subsection{The Bounded Factorization Property} In this subsection we characterize when the monoids $U_n(S)$ and $T_n(S)^\bullet$ are BFMs in terms of the additive and multiplicative structures of $S$.

\smallskip

\begin{theorem} \label{thm:BFM characterizations for U_n(S)}
Let $S$ be a reduced information semialgebra. The following statements are equivalent.
\begin{enumerate}
	\item[(a)] $U_n(S)$ is a BFM for every $n \in \nn_{\ge 2}$.
	\smallskip

	\item[(b)] $U_n(S)$ is a BFM for some $n \in \nn_{\ge 2}$.
	\smallskip

	\item[(c)] $(S,+)$ is a BFM.
\end{enumerate}
\end{theorem}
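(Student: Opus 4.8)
The plan is to establish the cycle of implications (a) $\Rightarrow$ (b) $\Rightarrow$ (c) $\Rightarrow$ (a). The step (a) $\Rightarrow$ (b) is immediate. For (b) $\Rightarrow$ (c), I would invoke part~3 of Proposition~\ref{prop:algebraic observation of T_n(S)}, which exhibits $\{I_n + sE_{12} : s \in S\} \cong (S,+)$ as a divisor-closed submonoid of $U_n(S)$; since divisor-closed submonoids inherit the bounded factorization property (the length of any factorization of an element of the submonoid, computed inside the submonoid, is bounded by its length computed in the ambient monoid, and conversely every factorization inside the submonoid is a factorization in the ambient monoid because the atoms coincide), it follows that $(S,+)$ is a BFM whenever $U_n(S)$ is. This reuses exactly the mechanism already used for atomicity and the ACCP in Theorems~\ref{thm:atomicity} and~\ref{thm:ACCP characterization for U_n(S)}.

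The substantive direction is (c) $\Rightarrow$ (a). Here I would fix $n \in \nn_{\ge 2}$ and produce, for each $U \in U_n(S)$, an explicit bound on the lengths of its rigid factorizations. The natural invariant to use is the one already introduced in the proof of Proposition~\ref{prop:algebraic observation of T_n(S)}: for $A \in T_n(S)^\bullet$ set $\Sigma(A) = \sum_{1 \le i < j \le n} \max \mathsf{L}_S(A_{ij})$, where $\mathsf{L}_S$ denotes the set of lengths in $(S,+)$. Since $(S,+)$ is a BFM, each $\mathsf{L}_S(A_{ij})$ is a finite set, so $\Sigma(A)$ is a well-defined nonnegative integer. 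The key structural fact is the superadditivity inequality $\Sigma(BB') \ge \Sigma(B) + \Sigma(B')$ from \cite[Lemma~3.4]{BS20}, valid for $B, B' \in T_n(S)^\bullet$. Now if $U = A_1 \cdots A_\ell$ is a factorization of $U$ into atoms of $U_n(S)$, then by Theorem~\ref{thm:atomicity} (or directly from the description of $\mathcal{A}(T_n(S)^\bullet)$ restricted to $U_n(S)$) every $A_t$ is an atom of additive type, i.e., $A_t = I_n + a_t E_{i_t j_t}$ with $a_t \in \mathcal{A}_+(S)$ and $i_t < j_t$; such a matrix has $\Sigma(A_t) = \max\mathsf{L}_S(a_t) = 1$. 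Iterating the superadditivity inequality gives $\Sigma(U) \ge \sum_{t=1}^{\ell} \Sigma(A_t) = \ell$, so $\ell \le \Sigma(U)$. This bounds every factorization length of $U$ by $\Sigma(U)$, and combined with atomicity of $U_n(S)$ (which holds by Theorem~\ref{thm:atomicity} since $(S,+)$ being a BFM is in particular atomic), we conclude $U_n(S)$ is a BFM for every $n \ge 2$, which is stronger than (a) but in particular gives it.

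I expect the main obstacle to be purely bookkeeping: verifying that $\Sigma(A_t) = 1$ for each additive-type atom and that atoms of multiplicative type cannot appear in a factorization of a unit triangular matrix. The former is a one-line computation once one observes that $a_t$, being an atom of $(S,+)$, has $\mathsf{L}_S(a_t) = \{1\}$; the latter is exactly part~1 of Proposition~\ref{prop:algebraic observation of T_n(S)} ($U_n(S)$ is divisor-closed in $T_n(S)^\bullet$, so its atoms are $U_n(S) \cap \mathcal{A}(T_n(S)^\bullet) = A_+$). A minor subtlety worth addressing explicitly is that we are working with \emph{rigid} factorizations in a noncommutative monoid; but since $U_n(S)$ is reduced, rigid factorizations are just finite words in the atoms mapping to $U$ under the product, and the length bound above applies verbatim to any such word. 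No genuinely new idea beyond the $\Sigma$-invariant and \cite[Lemma~3.4]{BS20} is needed.
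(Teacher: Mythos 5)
Your proposal is correct and follows essentially the same route as the paper: the implication (b) $\Rightarrow$ (c) via the divisor-closed copy of $(S,+)$ from part~3 of Proposition~\ref{prop:algebraic observation of T_n(S)}, and the implication (c) $\Rightarrow$ (a) via the invariant $\Sigma(A) = \sum_{1 \le i < j \le n} \max \mathsf{L}_S(A_{ij})$ and \cite[Lemma~3.4]{BS20}. The only cosmetic difference is that you derive the length bound $\ell \le \Sigma(U)$ by iterating the superadditivity inequality on additive-type atoms, whereas the paper directly cites the equality $\max \mathsf{L}_{U_n(S)}(A) = \Sigma(A)$ from the same lemma.
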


\begin{proof}
(a) $\Rightarrow$ (b): This is obvious.
\smallskip
	
(b) $\Rightarrow$ (c): Take $n \in \nn_{\ge 2}$ such that $U_n(S)$ is a BFM. Since $n \ge 2$, we can use part~3 of Proposition~\ref{prop:algebraic observation of T_n(S)} to identify $(S,+)$ with a divisor-closed submonoid of $U_n(S)$. Since $(S,+)$ is a divisor-closed submonoid of $U_n(S)$, the equality $\mathcal{A}(S) = \mathcal{A}(U_n(S)) \cap S$ holds and so $\mathsf{Z}_S^*(x) = \mathsf{Z}_{U_n(S)}^*(x)$ for all $x \in S$. The fact that $U_n(S)$ is a BFM now implies that $(S,+)$ is a BFM as well.
\smallskip
	
(c) $\Rightarrow$ (a): Fix $n \in \nn_{\ge 2}$ and $A \in U_n(S)$. Since $S$ is a BFM, $\max \mathsf{L}_{S}(A_{ij}) < \infty$ for all $i,j \in \ldb 1,n \rdb$ with $i < j$. It follows from~\cite[Lemma~3.4]{BS20} that $\max \mathsf{L}_{U_n(S)}(A) = \sum_{1 \le i < j \le n} \max \mathsf{L}_{S}(A_{ij})$, and so $|\mathsf{L}_{U_n(S)}(A)| < \infty$. As a consequence, each matrix in $U_n(S)$ has a bounded set of lengths, which completes the proof.
\end{proof}

\smallskip

Since each commutative BFM satisfies the ACCP, we obtain the following corollary.

\smallskip

\begin{cor} \label{cor:BFM implies ACCP for U_n(S)}
Let $S$ be a reduced information semialgebra. If $U_n(S)$ is a BFM, then $U_n(S)$ is an ACCP monoid.
\end{cor}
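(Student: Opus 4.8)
The plan is to deduce this corollary directly from the two characterization theorems already established in this section, combined with the classical fact that every commutative BFM satisfies the ACCP; this is a ``we immediately obtain'' corollary requiring no new idea. First I would assume $U_n(S)$ is a BFM and apply the implication (b) $\Rightarrow$ (c) of Theorem~\ref{thm:BFM characterizations for U_n(S)} to conclude that the additive monoid $(S,+)$ is a BFM.

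Next I would move to the commutative setting, where the implication ``BFM $\Rightarrow$ ACCP'' is standard: in a strictly ascending chain $x_1 M \subsetneq x_2 M \subsetneq \cdots$ of principal ideals of a commutative BFM no $x_i$ can be a unit (otherwise the chain stabilizes at $M$), so writing $x_i = x_{i+1}y_{i+1}$ with $y_{i+1} \notin U(M)$ and concatenating a longest factorization of $x_{i+1}$ with one of $y_{i+1}$ gives $\max\mathsf{L}(x_{i+1}) < \max\mathsf{L}(x_i)$, which is impossible for a sequence of positive integers (this also appears in \cite{AAZ90} and \cite{GH06}). Applying this to $(S,+)$, I conclude that $(S,+)$ is an ACCP monoid.

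Finally, with $(S,+)$ satisfying the ACCP, the implication (a) $\Rightarrow$ [(b) and (c)] of Theorem~\ref{thm:ACCP characterization for U_n(S)} yields that $U_n(S)$ satisfies both the left ACCP and the right ACCP, so $U_n(S)$ is an ACCP monoid, as claimed. (If one reads ``ACCP monoid'' here in the two-sided sense, the same $\max\mathsf{L}$-argument applied to chains of two-sided principal ideals $MxM$ of the BFM $U_n(S)$ works verbatim, since a strict inclusion $MxM \subsetneq MyM$ forces $x = ayb$ with at least one of $a,b$ a nonunit and hence $\max\mathsf{L}(x) > \max\mathsf{L}(y)$.)

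I do not anticipate any genuine obstacle: the proof is a short chaining of previously proved results. The only points deserving care are invoking the correct directions of Theorems~\ref{thm:BFM characterizations for U_n(S)} and~\ref{thm:ACCP characterization for U_n(S)}, and observing that it is precisely the passage through the commutative monoid $(S,+)$ that lets us exploit the purely commutative implication ``a BFM is an ACCP monoid'' even though $U_n(S)$ itself is noncommutative.
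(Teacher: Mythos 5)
Your proof is correct and follows exactly the route the paper intends: pass from $U_n(S)$ to $(S,+)$ via Theorem~\ref{thm:BFM characterizations for U_n(S)}, use the classical commutative implication BFM $\Rightarrow$ ACCP, and return via Theorem~\ref{thm:ACCP characterization for U_n(S)}. The paper states this corollary with only the remark ``since each commutative BFM satisfies the ACCP,'' so your write-up is simply a fuller version of the same argument.
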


\smallskip

Using Theorem~\ref{thm:BFM characterizations for U_n(S)} and Corollary~\ref{cor:BFM implies ACCP for U_n(S)} one can extend Diagram~\eqref{diag:from ACCP to atomicity for U_n(S)}: for each reduced information semialgebra $S$ and for every $n \in \nn_{\ge 2}$, all implications in the following diagram hold.
\begin{equation} \label{diag:atomic diagram with BFM included for U_n(S)}
	\begin{tikzcd}
		(S,+) \text{ is \textbf{BFM}} \arrow[r, Rightarrow] \arrow[d, Leftrightarrow] &  (S,+) \text{ is \textbf{ACCP}} \arrow[r, Rightarrow] \arrow[d, Leftrightarrow] & (S,+) \text{ is \textbf{atomic}} \arrow[d, Leftrightarrow] \\
		U_n(S) \text{ is a \textbf{BFM}} \arrow[r, Rightarrow] & U_n(S) \text{ is \textbf{ACCP}} \arrow[r, Rightarrow] & U_n(S) \text{ is \textbf{atomic}}
	\end{tikzcd}
\end{equation}

\smallskip

We are now able to determine precisely when $T_n(S)^\bullet$ is a BFM.

\smallskip

\begin{theorem} \label{thm:BFM characterizations}
Let $S$ be a reduced information semialgebra. Then the following statements are equivalent.
\begin{enumerate}
	\item[(a)] $T_n(S)^\bullet$ is a BFM for every $n \in \nn_{\ge 2}$.
	\smallskip

	\item[(b)] $T_n(S)^\bullet$ is a BFM for some $n \in \nn_{\ge 2}$.
	\smallskip
	
	\item[(c)] $S$ is a bi-BFM.
\end{enumerate}
\end{theorem}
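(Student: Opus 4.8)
The plan is to follow the same three-step template used in Theorems~\ref{thm:atomicity}, \ref{thm:ACCP characterization}, and~\ref{thm:BFM characterizations for U_n(S)}. The implication (a)~$\Rightarrow$~(b) is immediate. For (b)~$\Rightarrow$~(c), I would suppose that $T_n(S)^\bullet$ is a BFM for some $n \ge 2$ and invoke parts~2 and~3 of Proposition~\ref{prop:algebraic observation of T_n(S)}, which realize both $(S^\bullet,\cdot)$ and $(S,+)$ as divisor-closed submonoids of $T_n(S)^\bullet$. Since a divisor-closed submonoid $N$ of a monoid $M$ satisfies $\mathcal{A}(N) = N \cap \mathcal{A}(M)$, and hence $\mathsf{L}_N(x) = \mathsf{L}_M(x)$ for every $x \in N$, the bounded factorization property descends from $M$ to $N$. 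Therefore both $(S,+)$ and $(S^\bullet,\cdot)$ are BFMs, i.e., $S$ is a bi-BFM.

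The substance is in (c)~$\Rightarrow$~(a). Assume $S$ is a bi-BFM and fix $n \ge 2$; since $S$ is then bi-atomic, $T_n(S)^\bullet$ is atomic by Theorem~\ref{thm:atomicity}, so it suffices to bound $\max\mathsf{L}_{T_n(S)^\bullet}(A)$ by a quantity depending only on $A$, for an arbitrary $A \in T_n(S)^\bullet$. I would fix a factorization $A = M_1 \cdots M_\ell$ into atoms of $T_n(S)^\bullet$ and use the description of $\mathcal{A}(T_n(S)^\bullet)$ from~\cite[Theorem~2.1]{BS20}: each $M_k$ is either of \emph{multiplicative type} (a diagonal atom $I_n + (a-1)E_{ii}$ with $a \in \mathcal{A}_\times(S)$) or of \emph{additive type} (a unit triangular atom $I_n + aE_{ij}$ with $i<j$ and $a \in \mathcal{A}_+(S)$). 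Letting $P$ and $Q$ index the multiplicative-type and additive-type factors, so $\ell = |P|+|Q|$, I would bound the two parts separately. For $|P|$: the diagonal of a product of upper triangular matrices is the entrywise product of the diagonals, and additive-type atoms have trivial diagonal, so for each $j \in \ldb 1,n \rdb$ the entry $A_{jj}$ is the product in $(S^\bullet,\cdot)$ of the atoms $a$ coming from the multiplicative-type factors of the form $I_n + (a-1)E_{jj}$; hence the number of those factors lies in $\mathsf{L}_{(S^\bullet,\cdot)}(A_{jj})$ (with the convention $\max\mathsf{L}(1)=0$), and summing over $j$ gives $|P| \le \sum_{j=1}^n \max\mathsf{L}_{(S^\bullet,\cdot)}(A_{jj}) < \infty$ because $(S^\bullet,\cdot)$ is a BFM.

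For $|Q|$ I would route the estimate through the invariant $\Sigma(B) := \sum_{1 \le i < j \le n}\max\mathsf{L}_S(B_{ij})$ from~\cite{BS20}: by~\cite[Lemma~3.4]{BS20} it is superadditive under multiplication, so $\Sigma(A) \ge \sum_{k=1}^\ell \Sigma(M_k)$; a multiplicative-type atom is diagonal and contributes $\Sigma = 0$, whereas an additive-type atom $I_n + aE_{ij}$ contributes $\Sigma = \max\mathsf{L}_S(a) = 1$ since $a \in \mathcal{A}_+(S)$. Thus $|Q| \le \Sigma(A) = \sum_{1 \le i < j \le n}\max\mathsf{L}_S(A_{ij}) < \infty$ because $(S,+)$ is a BFM. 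Combining, $\ell \le \sum_{j=1}^n \max\mathsf{L}_{(S^\bullet,\cdot)}(A_{jj}) + \sum_{1 \le i < j \le n}\max\mathsf{L}_S(A_{ij})$, independent of the chosen factorization, so $\mathsf{L}_{T_n(S)^\bullet}(A)$ is finite and $T_n(S)^\bullet$ is a BFM. The only genuinely delicate point will be the bound on the additive-type atoms: off the superdiagonal the $(i,j)$-entry of $M_1\cdots M_\ell$ mixes contributions from many factors along every path from $i$ to $j$, so a direct entrywise count is awkward; passing to the superadditive functional $\Sigma$, which absorbs exactly those off-diagonal contributions, is what keeps the argument clean, and everything else is bookkeeping with the atom classification.
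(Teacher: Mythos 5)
Your proposal is correct, and the overall architecture --- (a)~$\Rightarrow$~(b) trivially, (b)~$\Rightarrow$~(c) via the divisor-closed submonoids of Proposition~\ref{prop:algebraic observation of T_n(S)}, and (c)~$\Rightarrow$~(a) by bounding the length of an arbitrary atomic factorization of $A$ in terms of $\max\mathsf{L}$ of the entries of $A$ --- is exactly the paper's. The one place you diverge is the key step of (c)~$\Rightarrow$~(a): the paper simply cites \cite[Theorem~3.6.1]{BS20}, which asserts that $\mathsf{L}_{T_n(S)^\bullet}(A)$ is bounded above by $\sum_{1 \le i < j \le n}\max\mathsf{L}_S(A_{ij}) + \max\mathsf{L}_{S^\bullet}(\det A)$, whereas you reprove such a bound from scratch by splitting the atoms in a factorization into multiplicative and additive type, counting the former through the diagonal entries and the latter through the superadditivity of $\Sigma$ from \cite[Lemma~3.4]{BS20}. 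Your argument is sound (the diagonal of a product of upper triangular matrices is the entrywise product of diagonals, so the multiplicative-type count at position $j$ is a length of $A_{jj}$ in $(S^\bullet,\cdot)$; and each additive-type atom contributes exactly $1$ to $\Sigma$), and it even yields the slightly sharper bound $\sum_{j=1}^n \max\mathsf{L}_{S^\bullet}(A_{jj})$ in place of $\max\mathsf{L}_{S^\bullet}(\det A)$, since concatenating longest factorizations of the $A_{jj}$ gives a factorization of $\det A$. What the paper's route buys is brevity; what yours buys is self-containment, at the cost of essentially re-deriving the cited result of \cite{BS20}. No gaps.
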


\begin{proof}
(a) $\Rightarrow$ (b): This is clear.
\smallskip
	
(b) $\Rightarrow$ (c): Take $n \in \nn_{\ge 2}$ such that $T_n(S)^\bullet$ is a BFM. Since $n \ge 2$, part~2 and part~3 of Proposition~\ref{prop:algebraic observation of T_n(S)} guarantee that $T_n(S)^\bullet$ has divisor-closed submonoids isomorphic to $(S,+)$ and $(S^\bullet, \cdot)$. After using an argument similar to that used in the proof of the corresponding implication of Theorem~\ref{thm:BFM characterizations for U_n(S)}, one obtains that both $(S,+)$ and $(S^\bullet, \cdot)$ are BFMs. Hence $S$ is a bi-BFM.
\smallskip
	
(c) $\Rightarrow$ (a): Fix $n \in \nn_{\ge 2}$, and take $A \in T_n(S)^\bullet$. Since $(S, +)$ is a BFM, $A_{ij} \in S$ has bounded set of lengths in $(S,+)$ for all $i,j \in \ldb 1,n \rdb$ with $i < j$. Also, since $(S^\bullet, \cdot)$ is a BFM, $\det A \in S^\bullet$ must have bounded set of lengths in $(S^\bullet, \cdot)$. It now follows from~\cite[Theorem~3.6.1]{BS20} that the set of lengths $\mathsf{L}_{T_n(S)^\bullet}(A)$ is bounded above by $\sum_{1 \le i < j \le n} \max \mathsf{L}_{S}(A_{ij}) + \max \mathsf{L}_{S^\bullet}(\det A)$. Therefore the monoid $T_n(S)^\bullet$ is a BFM.
\end{proof}

\smallskip

As an immediate consequence we have the following corollary.

\smallskip

\begin{cor} \label{cor:BFM implies ACCP}
Let $S$ be a reduced information semialgebra. If $T_n(S)^\bullet$ is a BFM, then $T_n(S)^\bullet$ is an ACCP monoid.
\end{cor}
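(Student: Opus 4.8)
The plan is to obtain this as essentially a one-step consequence of the two characterization theorems already proved. First I would dispose of the degenerate case $n = 1$, where $T_1(S)^\bullet = (S^\bullet,\cdot)$ is a commutative cancellative monoid and the assertion reduces to the classical implication that a commutative BFM satisfies the ACCP. So assume $n \ge 2$.

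Suppose $T_n(S)^\bullet$ is a BFM. By Theorem~\ref{thm:BFM characterizations} this is equivalent to $S$ being a bi-BFM, i.e., to both $(S,+)$ and $(S^\bullet,\cdot)$ being (commutative) BFMs. Each commutative BFM satisfies the ACCP: an infinite strictly ascending chain of principal ideals would force some nonunit element to factor through arbitrarily long products of nonunits, hence to admit factorizations of unbounded length, contradicting boundedness of the set of lengths. Thus both $(S,+)$ and $(S^\bullet,\cdot)$ satisfy the ACCP; that is, $S$ is a bi-ACCP monoid. Feeding this into Theorem~\ref{thm:ACCP characterization}, whose hypothesis~(a) is precisely that $S$ is a bi-ACCP monoid, I would obtain conclusions~(b) and~(c): $T_n(S)^\bullet$ satisfies both the right ACCP and the left ACCP, and in particular it is an ACCP monoid.

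I expect no genuine obstacle here: the arithmetic content has already been packaged into Theorems~\ref{thm:BFM characterizations} and~\ref{thm:ACCP characterization}, and the only extra ingredient is the elementary implication BFM $\Rightarrow$ ACCP in the commutative setting. The single minor point to mind is the quantifier bookkeeping, since the corollary is stated for a fixed $n$ while the two theorems are biconditionals phrased with \emph{for some} $n \in \nn_{\ge 2}$ on one side and \emph{for every} $n \in \nn_{\ge 2}$ on the other; but this makes passing between a single $n$ and all $n$ automatic rather than an issue. As an alternative one could bypass the theorems entirely and run the ``strictly ascending chain $\Rightarrow$ unbounded length'' argument directly inside the cancellative monoid $T_n(S)^\bullet$, using that a BFM is atomic by definition, but I would keep the theorem-based route as the cleaner write-up.
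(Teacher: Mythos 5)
Your argument is exactly the paper's: it deduces the corollary from Theorem~\ref{thm:BFM characterizations} together with the classical fact that a commutative BFM satisfies the ACCP, and then feeds the resulting bi-ACCP condition into Theorem~\ref{thm:ACCP characterization}. The proposal is correct, and the extra remarks on the case $n=1$ and on quantifier bookkeeping are harmless refinements of the same route.
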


\smallskip

We can now apply Theorem~\ref{thm:BFM characterizations} and Corollary~\ref{cor:BFM implies ACCP} to extend Diagram~\eqref{diag:from ACCP to atomicity}. For each information semialgebra $S$ and for every $n \in \nn_{\ge 2}$, each implication in the following diagram holds.
\begin{equation} \label{diag:atomic diagram with BFM included}
	\begin{tikzcd}
		S \text{ is \textbf{bi-BFM}} \arrow[r, Rightarrow] \arrow[d, Leftrightarrow] &  S \text{ is \textbf{bi-ACCP}} \arrow[r, Rightarrow] \arrow[d, Leftrightarrow] & S \text{ is \textbf{bi-atomic}} \arrow[d, Leftrightarrow] \\
		T_n(S)^\bullet \text{ is a \textbf{BFM}} \arrow[r, Rightarrow] & T_n(S)^\bullet \text{ is \textbf{ACCP}} \arrow[r, Rightarrow] & T_n(S)^\bullet \text{ is \textbf{atomic}}
	\end{tikzcd}
\end{equation}

\smallskip

We emphasize that the leftmost horizontal implications in Diagram~\eqref{diag:atomic diagram with BFM included} are not, in general, reversible. Before we illustrate this observation in Example~\ref{example:LW}, we recall the Lindemann-Weierstrass Theorem, an important tool in transcendental number theory.

\smallskip

\begin{theorem} \cite[Chapter~1]{aB90}
If $\alpha_1, \dots, \alpha_n$ are distinct algebraic numbers, then the set $\{e^{\alpha_1}, \dots, e^{\alpha_n} \}$ is linearly independent over the algebraic numbers.
\end{theorem}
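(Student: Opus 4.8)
\emph{Proof proposal.} The plan is to run the classical Hermite--Lindemann argument: assume a nontrivial relation and manufacture a rational integer that is forced to be simultaneously nonzero and, in absolute value, less than $1$. Suppose toward a contradiction that $\sum_{i=1}^{n}\beta_{i}e^{\alpha_{i}}=0$ with the $\alpha_{i}$ distinct algebraic numbers and the $\beta_{i}$ algebraic, not all $0$. The first (purely algebraic) stage is a \emph{symmetrization}. A suitable $\mathbb{Q}$-basis $\omega_{1},\dots,\omega_{r}$ of the $\mathbb{Q}$-span of the $\alpha_{i}$ turns the relation into $P\bigl(e^{\omega_{1}},\dots,e^{\omega_{r}}\bigr)=0$ for some nonzero $P\in\overline{\mathbb{Q}}[x_{1},\dots,x_{r}]$; replacing $P$ by the product of the Galois conjugates of its coefficients gives a nonzero $P_{0}\in\mathbb{Q}[x_{1},\dots,x_{r}]$ with the same property, which unwinds to a relation $\sum_{l}a_{l}e^{\delta_{l}}=0$ with $a_{l}\in\mathbb{Z}\setminus\{0\}$ and the $\delta_{l}$ \emph{distinct} algebraic numbers (distinct because they are $\mathbb{Z}$-combinations of the $\mathbb{Q}$-independent $\omega_{j}$). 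Translating by a suitable $e^{-\delta_{l_{0}}}$ and then forming a further product over the Galois group of a splitting field (which, as the $a_{l}$ are already rational, produces integer coefficients that are constant on conjugacy classes of exponents), one reduces — a dominant-exponent argument, ordering exponents by real part then imaginary part, guarantees that this symmetrization neither collapses the relation nor destroys the nonzero coefficient at the exponent $0$ — to refuting
\[
  c_{0}+\sum_{k=1}^{m}c_{k}\sum_{\theta\in\Theta_{k}}e^{\theta}=0,\qquad c_{0}\in\mathbb{Z}\setminus\{0\},\quad c_{k}\in\mathbb{Z},
\]
where $\Theta_{1},\dots,\Theta_{m}$ are pairwise disjoint full Galois conjugacy classes of nonzero algebraic numbers. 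For the detailed form of this reduction I would follow \cite[Chapter~1]{aB90}.

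For the analytic engine, given $h\in\mathbb{C}[x]$ put $I_{h}(t)=\int_{0}^{t}e^{t-x}h(x)\,dx$; iterating integration by parts gives the identity $I_{h}(t)=e^{t}\sum_{j\ge0}h^{(j)}(0)-\sum_{j\ge0}h^{(j)}(t)$ together with the crude bound $|I_{h}(t)|\le|t|\,e^{|t|}\,\overline{h}\bigl(|t|\bigr)$, where $\overline{h}$ has the absolute values of the coefficients of $h$. Fix a large prime $p$ and a positive integer $\ell$ with $\ell\theta$ an algebraic integer for every exponent $\theta$, write $d=\sum_{k}|\Theta_{k}|$, and set
\[
  h(x)=\frac{\ell^{N}}{(p-1)!}\,x^{p-1}\prod_{k=1}^{m}\prod_{\theta\in\Theta_{k}}(x-\theta)^{p},\qquad N=(d+1)p .
\]
Since $\bigcup_{k}\Theta_{k}$ is conjugation-closed, $h\in\mathbb{Q}[x]$, and $N$ is linear in $p$. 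Let $V=c_{0}I_{h}(0)+\sum_{k}c_{k}\sum_{\theta\in\Theta_{k}}I_{h}(\theta)$. Since $I_{h}(0)=0$ and the relation above collapses the $\sum_{\theta}e^{\theta}$-contributions to a single term, one obtains $V=-c_{0}\sum_{j}h^{(j)}(0)-\sum_{k}c_{k}\sum_{\theta\in\Theta_{k}}\sum_{j}h^{(j)}(\theta)$.

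Finally I would play the arithmetic and analytic estimates against each other. Arithmetically: the factor $(x-\theta)^{p}$ kills $h^{(j)}(\theta)$ for $j\le p-1$, and Leibniz's rule with Kummer's theorem on the $p$-adic valuation of binomial coefficients gives $h^{(j)}(\theta)\in p\,\overline{\mathbb{Z}}$ for $j\ge p$; similarly $h^{(j)}(0)\in p\,\overline{\mathbb{Z}}$ for $j\ge p$, whereas $h^{(p-1)}(0)=\pm\,\ell^{N}\mu^{p}$ with $\mu=\prod_{k}\prod_{\theta\in\Theta_{k}}\theta\neq0$, and since $\ell^{d}\mu\in\mathbb{Z}\setminus\{0\}$ the quantity $\ell^{N}\mu^{p}$ is a nonzero rational integer coprime to $p$ once $p$ exceeds every prime divisor of $c_{0}\,\ell^{d+1}\mu$. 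Summing $h^{(j)}(\theta)$ over each conjugacy class $\Theta_{k}$ and using $h\in\mathbb{Q}[x]$ turns every orbit sum into a field trace, hence into a rational integer; assembling the pieces shows $V\in\mathbb{Z}$ with $V\equiv\mp\,c_{0}\,\ell^{N}\mu^{p}\not\equiv0\pmod p$, so $|V|\ge1$. Analytically: because $N$ grows only linearly in $p$ and the $|\theta|$ are bounded, $\overline{h}\bigl(|\theta|\bigr)\le C^{p}/(p-1)!$ for a constant $C$ independent of $p$, whence $|V|\le\sum_{k}|c_{k}|\sum_{\theta\in\Theta_{k}}|\theta|\,e^{|\theta|}\,\overline{h}\bigl(|\theta|\bigr)\le C_{1}C^{p}/(p-1)!<1$ once $p$ is large. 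Choosing a prime $p$ large enough for both requirements yields $1\le|V|<1$, the desired contradiction, and the theorem follows. The conceptually hard part is the symmetrization of the opening paragraph — coaxing an arbitrary $\overline{\mathbb{Q}}$-linear relation into the displayed normal form, with rational-integer data, exponents closed under conjugation, and a guaranteed nonzero constant term, without inadvertently producing the trivial relation — whereas the Hermite--Lindemann machinery, although it requires careful bookkeeping of the powers of $p$ and of $\deg h$, is routine once that normal form is in hand.
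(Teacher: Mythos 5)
This theorem is not proved in the paper at all: it is quoted verbatim from Baker's monograph and used as a black box in the example that follows it, so your sketch can only be measured against the classical argument itself. Your analytic engine is the standard one and is correctly assembled: the integration-by-parts identity for $I_h$, the choice $h(x)=\frac{\ell^{N}}{(p-1)!}\,x^{p-1}\prod_{\theta}(x-\theta)^{p}$, the congruence $V\equiv\mp\,c_{0}\ell^{N}\mu^{p}\pmod p$ forcing $|V|\ge 1$, and the bound $|V|\le C_{1}C^{p}/(p-1)!$ do fit together exactly as you say.

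The genuine gap is in the step you yourself flag as the hard one and then dispatch in a parenthesis: securing the \emph{nonzero constant term} $c_{0}$. After translating so that one exponent is $0$ with coefficient $a_{l_{0}}\neq 0$ and taking the product over the Galois group $G$ of a splitting field, the coefficient of $e^{0}$ in the expansion is $a_{l_{0}}^{|G|}$ \emph{plus} a sum of products $\prod_{\tau}a_{l_{\tau}}$ over all tuples whose chosen nonzero exponents $\tau(\delta_{l_{\tau}})$ sum to $0$, and nothing in your argument rules out cancellation. The dominant-exponent device you invoke protects the coefficient of the \emph{extremal} exponent for the real-then-imaginary order, not the coefficient of $0$, so it cannot do the job you assign to it; and deferring to Baker does not close the hole, because Baker's Chapter~1 is organized precisely to avoid a constant term: he keeps the exponents as a union of full conjugate sets, works with the $n$ polynomials $f_{i}(x)=l^{np}\prod_{j}(x-\alpha_{j})^{p}/(x-\alpha_{i})$, and derives the contradiction from the product $J_{1}\cdots J_{n}$, which is rational by symmetry, with no distinguished exponent $0$. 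Your route is salvageable with one extra move: once you have $\sum_{\epsilon}C_{\epsilon}e^{\epsilon}=0$ with nonzero integer coefficients constant on full conjugacy classes (which the Galois product plus the dominant-exponent argument does deliver), multiply by the reflected sum $\sum_{\epsilon'}C_{\epsilon'}e^{-\epsilon'}$; the product is still $0$, its coefficient of $e^{0}$ is $\sum_{\epsilon}C_{\epsilon}^{2}>0$, and the coefficients of the nonzero differences $\epsilon-\epsilon'$ remain integers constant on conjugacy classes because the Galois action permutes the pairs realizing a given difference. With that emendation your proof closes.
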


\smallskip

\begin{example}\label{example:LW}
Set $M = \langle 1/p : p \in \pp \rangle$ and consider the submonoid $S = \langle e^q : q \in M \rangle$ of $(\rr_{\ge 0},+)$. By the Lindemann-Weierstrass Theorem, $(S,+)$ is the free commutative monoid on the set $\{e^q : q \in M\}$. In addition, it is clear that $S$ is closed under multiplication. Therefore $S$ is an information semialgebra. Since $1 = \min \, S^\bullet$, the monoid $(S^\bullet, \cdot)$ is reduced, and so $S$ is a reduced information semialgebra. On the other hand, $0$ is not a limit point of $S^\bullet$, and \cite[Proposition~4.5]{fG19} guarantees that $(S,+)$ is a BFM. It then follows from \cite[Corollary~1.3.3]{GH06} that $(S,+)$ is an ACCP monoid.
	
To verify that $(S^\bullet, \cdot)$ is an ACCP monoid, suppose that $(x_n S^\bullet)_{n \in \nn_0}$ is an ascending chain of principal ideals of $(S^\bullet, \cdot)$. Then there exists a sequence $(y_n)_{n \in \nn}$ of elements of $S^\bullet$ such that $x_n = x_{n+1} y_{n+1}$ for every $n \in \nn_0$. Note that $(x_n)_{n \in \nn_0}$ is a decreasing sequence that converges to some $\ell \in \rr_{\ge 1}$. The fact that $x_0 = x_n \prod_{i=1}^n y_i$ for each $n \in \nn$ implies that $\lim_{n \to \infty} y_n = 1$. Then, after removing finitely many terms from $(x_n S^\bullet)_{n \in \nn_0}$ if necessary, one may assume that $y_n < 2$ for every $n \in \nn$. Observe that $S \cap (0,2) \subseteq \{e^q : q \in M\}$. Thus, for each $n \in \nn$, there exists $q_n \in M$ such that $y_n = e^{q_n}$. Let $x_0 = c_1 e^{r_1} + \dots + c_k e^{r_k}$ for some $k \in \nn$, coefficients $c_1, \dots, c_k \in \nn$, and exponents $r_1, \dots, r_k \in M$ with $r_1 < \dots < r_k$. Since $(S,+)$ is free on $\{e^{q} : q \in M\}$, the equalities $x_0 = x_n \prod_{i=1}^n y_i$ (for all $n \in \nn$) guarantee that $x_n = c_1 e^{r_{n,1}} + \dots + c_k e^{r_{n,k}}$, where $r_{n,1}, \dots, r_{n,k} \in M$ satisfy that $r_{n,j} + \sum_{i=1}^n q_i = r_j$ for every $j \in \ldb 1, k \rdb$. Observe now that $r_{n,j} = r_{n+1,j} + q_{n+1} \in r_{n+1,j} + M$ for each $n \in \nn$ and each $j \in \ldb 1,k \rdb$. As a result, $(r_{n,j} + M)_{n \in \nn}$ is an ascending chain of principal ideals of $M$ for each $j \in \ldb 1,k \rdb$. Since $M$ is an ACCP monoid by \cite[Theorem~5.2]{fG18b}, the ascending chain of principal ideals $(r_{n,j} + M)_{n \in \nn}$ eventually stabilizes. Hence $(x_n S^\bullet)_{n \in \nn_0}$ must eventually stabilize. Thus, $(S^\bullet, \cdot)$ is an ACCP monoid.
	
Finally, we show that $(S^\bullet, \cdot)$ is not a BFM. It is clear that $M$ is not a BFM (for instance, $p (1/p)$ is a length-$p$ factorization in $\mathsf{Z}(1)$ for every $p \in \pp$). On the other hand, $M$ is isomorphic to the multiplicative monoid $N = \{e^q : q \in M\}$ and, therefore, $N$ is not a BFM. As an immediate consequence of the Lindemann-Weierstrass Theorem, one obtains that $N$ is a divisor-closed submonoid of $(S^\bullet, \cdot)$. Hence the monoid $(S^\bullet, \cdot)$ is not a BFM.
\end{example}

\smallskip

%
%
%

\subsection{The Finite Factorization Property} As we did for the bounded factorization property in the previous subsection, we shall prove in this one that the finite factorization property can be transferred back and forth between a reduced information semialgebra $S$ and the monoids $U_n(S)$ and $T_n(S)^\bullet$ for every $n \ge 2$. We begin by considering $U_n(S)$.

\smallskip

\begin{theorem} \label{thm:FFM characterizations for U_n(S)}
Let $S$ be a reduced information semialgebra. The following statements are equivalent.
\begin{enumerate}
	\item[(a)] $U_n(S)$ is an FFM for every $n \in \nn_{\ge 2}$.
	\smallskip
	
	\item[(b)] $U_n(S)$ is an FFM for some $n \in \nn_{\ge 2}$.
	\smallskip
	
	\item[(c)] $(S,+)$ is an FFM.
\end{enumerate}
\end{theorem}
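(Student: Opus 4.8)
The plan is to follow the template already used for Theorems~\ref{thm:ACCP characterization for U_n(S)} and~\ref{thm:BFM characterizations for U_n(S)}, again extracting everything from the divisor-closed submonoids of Proposition~\ref{prop:algebraic observation of T_n(S)}. The implication (a)~$\Rightarrow$~(b) is trivial. For (b)~$\Rightarrow$~(c), pick $n \in \nn_{\ge 2}$ for which $U_n(S)$ is an FFM; by part~3 of Proposition~\ref{prop:algebraic observation of T_n(S)} the submonoid $\{I_n + sE_{12} : s \in S\}$ is divisor-closed in $U_n(S)$ and isomorphic to $(S,+)$, and, exactly as in the proof of Theorem~\ref{thm:BFM characterizations for U_n(S)}, one has $\mathcal{A}(S) = \mathcal{A}(U_n(S)) \cap S$ and $\mathsf{Z}^*_S(x) = \mathsf{Z}^*_{U_n(S)}(x)$ for every $x \in S$; hence $(S,+)$ inherits the finite factorization property from $U_n(S)$.

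The substance of the theorem is (c)~$\Rightarrow$~(a), and I would prove it through a short chain of reductions. Assume $(S,+)$ is an FFM and fix $n \in \nn_{\ge 2}$. Since $(S,+)$ is then atomic, $U_n(S)$ is atomic by Theorem~\ref{thm:atomicity}, so it remains only to bound the number of rigid factorizations of a given matrix. The crucial point is the following claim: for every $A \in U_n(S)$ the set of left divisors of $A$ in $U_n(S)$ is finite. To prove this, write $A = BC$ with $B, C \in U_n(S)$ and expand entrywise: for $1 \le i < j \le n$,
\[
	A_{ij} = \sum_{k=i}^{j} B_{ik}C_{kj} = B_{ij} + \Bigl( C_{ij} + \sum_{i < k < j} B_{ik}C_{kj}\Bigr),
\]
where the parenthesized term belongs to $S$ because $0 \in S$ and $S$ is closed under both operations. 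Thus $B_{ij}$ divides $A_{ij}$ in the monoid $(S,+)$, and since $(S,+)$ is an FFM, Lemma~\ref{lem:A(x) is finite iff Z(x) is finite} tells us that $A_{ij}$ has only finitely many divisors in $(S,+)$; that is, $D_{(S,+)}(A_{ij})$ is finite. As a unit triangular matrix is determined by its strictly upper-triangular entries, there are at most $\prod_{1 \le i < j \le n} |D_{(S,+)}(A_{ij})| < \infty$ choices for $B$, which proves the claim.

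Finally I would deduce finiteness of $\mathsf{Z}^*(A)$ from the claim. Given a rigid factorization $A = C_1 \cdots C_\ell$ with each $C_t \in \mathcal{A}(U_n(S))$, every prefix $P_t := C_1 \cdots C_t$ is a left divisor of $A$, and the $P_t$ are pairwise distinct because $U_n(S)$ is reduced and cancellative (in such a monoid a nonempty product of atoms is never the identity). Hence $I_n = P_0 \prec P_1 \prec \cdots \prec P_\ell = A$ is a chain in the finite poset of left divisors of $A$; by left cancellation each $C_t$ is recovered from $P_{t-1}$ and $P_t$, so the assignment sending a rigid factorization to its chain of prefixes is injective. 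Since a finite poset has only finitely many chains, $\mathsf{Z}^*(A)$ is finite, and together with atomicity this shows that $U_n(S)$ is an FFM, closing the cycle. I expect the main obstacle to be precisely the entrywise identity above together with the bookkeeping that a left divisor of $A$ is pinned down by finitely much data; everything else is the by-now-familiar manipulation with divisor-closed submonoids and prefix chains. (This argument will also serve, essentially verbatim but tracking diagonal entries via a divisor-closed copy of $(S^\bullet,\cdot)$ as well, when the analogous statement for $T_n(S)^\bullet$ is treated.)
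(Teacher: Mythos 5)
Your proof is correct, and while (a) $\Rightarrow$ (b) and (b) $\Rightarrow$ (c) coincide with the paper's argument, your proof of the substantive implication (c) $\Rightarrow$ (a) takes a genuinely different route. The paper bounds the set of \emph{atoms} dividing $B$ up to permutation: if $I_n + aE_{k\ell} \mid_p B$, it extracts $a \mid_S B_{k\ell}$ from the entrywise expansion, invokes Lemma~\ref{lem:A(x) is finite iff Z(x) is finite} to conclude that only finitely many atoms occur in factorizations of $B$, and then must separately import the bounded factorization property (via Theorem~\ref{thm:BFM characterizations for U_n(S)} and \cite[Lemma~3.4]{BS20}) to cap the lengths before concluding $|\mathsf{Z}^*(B)| < \infty$. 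You instead bound the set of all \emph{left divisors} of $A$: the identity $A_{ij} = B_{ij} + \big(C_{ij} + \sum_{i<k<j} B_{ik}C_{kj}\big)$ puts $B_{ij} \in D_{(S,+)}(A_{ij})$, which is finite by the same Lemma~\ref{lem:A(x) is finite iff Z(x) is finite}, and a unit triangular matrix is determined by its strictly upper entries. Your prefix-chain argument (valid because $U_n(S)$ is reduced and cancellative, so left divisibility is a partial order and nonempty products of atoms are never $I_n$) then yields finiteness of $\mathsf{Z}^*(A)$ directly, with the length bound coming for free from the height of the finite poset rather than from the BFM property. This makes your proof more self-contained — it is essentially a noncommutative left-divisor analogue of Lemma~\ref{lem:A(x) is finite iff Z(x) is finite} — at the cost of not reusing the already-established BFM machinery; your parenthetical remark that the same scheme extends to $T_n(S)^\bullet$ by also tracking diagonal entries through the divisor-closed copy of $(S^\bullet,\cdot)$ is likewise sound.
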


\begin{proof}
(a) $\Rightarrow$ (b): This is clear.
\smallskip
	
(b) $\Rightarrow$ (c): Since $n \ge 2$, part~3 of Proposition~\ref{prop:algebraic observation of T_n(S)} allows us to identify $(S,+)$ with a divisor-closed submonoid of $U_n(S)$. Since $\mathsf{Z}_S^*(x) = \mathsf{Z}_{T_n(S)^\bullet}^*(x)$ for all $x \in S$, the fact that $U_n(S)$ is an FFM guarantees that $(S,+)$ is also an FFM.
\smallskip
	
(c) $\Rightarrow$ (a): Suppose that $(S,+)$ is an FFM and take $B \in U_n(S) \setminus \{I_n\}$. In order to show that $|\mathsf{Z}^*_{U_n(S)}(B)| < \infty$, we need the following claim.
\smallskip

{\it Claim:} The set $\{A \in \mathcal{A}(U_n(S)) : A \mid_p B \}$ is finite.
\smallskip

{\it Proof of Claim:} Suppose that $A \mid_p B$ for some $A \in \mathcal{A}(U_n(S))$. Then there exist $a \in \mathcal{A}_+(S)$ and $k,\ell \in \ldb 1,n \rdb$ with $k < \ell$ such that $A = I_n + aE_{k \ell}$, and there exist $C,D \in U_n(S)$ such that $B = CAD$. Since $[CA]_{k \ell} = a + C_{k \ell}$, we obtain that $a \mid_S [CA]_{k \ell}$. In addition,
\[
	B_{k \ell} = \sum_{j=1}^n [CA]_{kj} D_{j \ell} = [CA]_{k \ell} + \sum_{j \in \ldb 1,n \rdb \setminus \{\ell\}} [CA]_{kj} D_{j \ell},
\]
which implies that $[CA]_{k \ell} \mid_S B_{k \ell}$. Consequently, $a \mid_S B_{k \ell}$. Since $(S,+)$ is a reduced FFM, the set $\mathsf{Z}_{S}(B_{k \ell})$ is finite, and it follows from Lemma~\ref{lem:A(x) is finite iff Z(x) is finite} that $B_{k \ell}$ is divisible in $S$ by only finitely many atoms. This in turn implies that
\[
	|\{A \in \mathcal{A}(U_n(S)) : A \mid_p B \}| \le |\{(a,(i,j)) \in \mathcal{A}(S) \times \ldb 1,n \rdb^2 : i < j \text{ and } a \mid_S B_{i,j} \}| < \infty,
\]
from which the claim follows.

\smallskip
	
Since $(S,+)$ is an FFM, it must be also a BFM and, by Theorem~\ref{thm:BFM characterizations for U_n(S)}, we see that $U_n(S)$ is a BFM. Thus, $\mathsf{L}_{U_n(S)}(B)$ is finite. This, along with the fact that there are only finitely many atoms in $U_n(S)$ that divide $B$ up to permutation, implies that $|\mathsf{Z}^*_{U_n(S)}(B)| < \infty$. Hence $U_n(S)$ is an FFM.
\end{proof}

\smallskip

From Theorem~\ref{thm:FFM characterizations for U_n(S)}, we immediately have the following corollary.

\smallskip

\begin{cor} \label{cor:FFM implies BFM for U_n(S)}
Let $S$ be a reduced information semialgebra. If $U_n(S)$ is an FFM, then $U_n(S)$ must be a BFM.
\end{cor}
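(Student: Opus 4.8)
The plan is to recognize that this is essentially a free consequence of results already in hand, and there is no genuine obstacle to overcome. The quickest route is to invoke the general observation recorded in Section~\ref{sec:prelim}: for any (cancellative) monoid, being an FFM implies being a BFM. Concretely, if $U_n(S)$ is an FFM then $\mathsf{Z}^*(B)$ is a finite nonempty set for every $B \in U_n(S)$, so the length set $\mathsf{L}_{U_n(S)}(B) = \{ |z| : z \in \mathsf{Z}^*(B) \}$ is finite and nonempty; hence $U_n(S)$ is atomic with bounded factorization lengths, i.e., a BFM. The only point requiring a moment's care is that the noncommutative FFM property is phrased via rigid factorizations, but since each rigid factorization $z$ carries a well-defined length $|z|$, finiteness of $\mathsf{Z}^*(B)$ at once yields finiteness of $\mathsf{L}_{U_n(S)}(B)$.

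For stylistic parallelism with Corollaries~\ref{cor:ACCP implies atomicity for U_n(S)} and~\ref{cor:BFM implies ACCP for U_n(S)}, I would instead (or additionally) present the implication by transferring through $(S,+)$: if $U_n(S)$ is an FFM, then Theorem~\ref{thm:FFM characterizations for U_n(S)} gives that $(S,+)$ is an FFM; since $(S,+)$ is a commutative cancellative monoid, it is therefore a BFM; and then Theorem~\ref{thm:BFM characterizations for U_n(S)} converts this back into the assertion that $U_n(S)$ is a BFM. Either phrasing is a two-line argument, and together with Corollary~\ref{cor:BFM implies ACCP for U_n(S)} it completes the left-hand horizontal chain of Diagram~\eqref{diag:atomic diagram with BFM included for U_n(S)}, extending it to a four-term chain FFM $\Rightarrow$ BFM $\Rightarrow$ ACCP $\Rightarrow$ atomic for the monoids $U_n(S)$.
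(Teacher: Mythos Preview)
Your proposal is correct, and your second route---transferring through $(S,+)$ via Theorem~\ref{thm:FFM characterizations for U_n(S)} and Theorem~\ref{thm:BFM characterizations for U_n(S)}---is exactly what the paper has in mind: the corollary is stated with no proof beyond the remark that it follows immediately from Theorem~\ref{thm:FFM characterizations for U_n(S)}. Your first route (the general ``FFM $\Rightarrow$ BFM'' observation from Section~\ref{sec:prelim}) is equally valid and arguably more direct, but the transfer argument is the one that parallels the surrounding corollaries.
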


\smallskip

Using Theorem~\ref{thm:FFM characterizations for U_n(S)} and Corollary~\ref{cor:FFM implies BFM for U_n(S)} we now extend Diagram~(\ref{diag:atomic diagram with BFM included for U_n(S)}) one step further. For each reduced information semialgebra $S$ and every $n \in \nn_{\ge 2}$, each implication in the next diagram holds.

\smallskip

\begin{equation} \label{diag:full atomic diagram for U_n(S)}
	\begin{tikzcd}
		(S,+) \text{ is \textbf{FFM}} \arrow[r, Rightarrow] \arrow[d, Leftrightarrow] & (S,+) \text{ is \textbf{BFM}} \arrow[r, Rightarrow] \arrow[d, Leftrightarrow] &  (S,+) \text{ is \textbf{ACCP}} \arrow[r, Rightarrow] \arrow[d, Leftrightarrow] & (S,+) \text{ is \textbf{atomic}} \arrow[d, Leftrightarrow] \\
		U_n(S) \text{ is an \textbf{FFM}} \arrow[r, Rightarrow] & U_n(S) \text{ is a \textbf{BFM}} \arrow[r, Rightarrow] & U_n(S) \text{ is \textbf{ACCP}} \arrow[r, Rightarrow] & U_n(S) \text{ is \textbf{atomic}}
	\end{tikzcd}
\end{equation}

\smallskip

We proceed to establish a result analogous to that of Theorem~\ref{thm:FFM characterizations for U_n(S)} for the monoids $T_n(S)^\bullet$.

\smallskip

\begin{theorem} \label{thm:FFM characterizations}
Let $S$ be a reduced information semialgebra. The following statements are equivalent.
\begin{enumerate}
	\item[(a)] $T_n(S)^\bullet$ is an FFM for every $n \in \nn_{\ge 2}$.
	\smallskip

	\item[(b)] $T_n(S)^\bullet$ is an FFM for some $n \in \nn_{\ge 2}$.
	\smallskip

	\item[(c)] $S$ is a bi-FFM.
\end{enumerate}
\end{theorem}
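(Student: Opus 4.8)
The plan is to follow the template of Theorem~\ref{thm:FFM characterizations for U_n(S)}, replacing $U_n(S)$ by $T_n(S)^\bullet$ and feeding the multiplicative data in through the determinant. The implication (a)~$\Rightarrow$~(b) is immediate, and (b)~$\Rightarrow$~(c) is proved just as in the $U_n(S)$ case: for $n\ge 2$, parts~2 and~3 of Proposition~\ref{prop:algebraic observation of T_n(S)} produce divisor-closed submonoids of $T_n(S)^\bullet$ isomorphic to $(S^\bullet,\cdot)$ and to $(S,+)$; since the rigid factorizations of an element of a divisor-closed submonoid are the same whether computed inside the submonoid or inside $T_n(S)^\bullet$, finiteness of $\mathsf{Z}^*_{T_n(S)^\bullet}$ forces both $(S,+)$ and $(S^\bullet,\cdot)$ to be FFMs, that is, $S$ is a bi-FFM.

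The real content is (c)~$\Rightarrow$~(a). Fix $n\ge 2$ and $A\in T_n(S)^\bullet\setminus\{I_n\}$. Since $S$ is reduced, so is $T_n(S)^\bullet$ (its only unit is $I_n$), and hence each rigid factorization of $A$ is simply a word in $\mathcal{A}(T_n(S)^\bullet)$ whose product is $A$; in particular every atom occurring in such a word divides $A$ up to permutation. Thus it suffices to establish two finiteness statements: (i) $\mathsf{L}_{T_n(S)^\bullet}(A)$ is finite, and (ii) $\{B\in\mathcal{A}(T_n(S)^\bullet):B\mid_p A\}$ is finite. For~(i) I would note that a bi-FFM is a bi-BFM, so Theorem~\ref{thm:BFM characterizations} gives that $T_n(S)^\bullet$ is a BFM; hence any factorization of $A$ has length at most some fixed $N$. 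Granting~(ii), say the atoms dividing $A$ up to permutation are $B_1,\dots,B_k$: every rigid factorization of $A$ is then a word of length $\le N$ in $\{B_1,\dots,B_k\}$, of which there are only finitely many, so $|\mathsf{Z}^*_{T_n(S)^\bullet}(A)|<\infty$.

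For~(ii) I would use the description of $\mathcal{A}(T_n(S)^\bullet)$ from~\cite[Theorem~2.1]{BS20} together with two elementary identities for a product $A=B_1\cdots B_m$ of upper triangular matrices: first, $(B_1\cdots B_m)_{\ell\ell}=\prod_{l=1}^m (B_l)_{\ell\ell}$ for each $\ell$, whence $\det A=\prod_\ell A_{\ell\ell}=\prod_{l=1}^m\det B_l$; and second, $(B_1\cdots B_m)_{ij}$ is the sum, over non-decreasing chains $i=k_0\le k_1\le\cdots\le k_m=j$, of the products $\prod_l (B_l)_{k_{l-1}k_l}$, each of which lies in $S$. If a multiplicative-type atom $I_n+(a-1)E_{ii}$ occurs in $A=B_1\cdots B_m$, then, as additive-type atoms have determinant $1$, the parameter $a$ appears as a factor of the induced factorization of $\det A$ in $(S^\bullet,\cdot)$, so $a\mid_{S^\bullet}\det A$; since $(S^\bullet,\cdot)$ is an FFM, Lemma~\ref{lem:A(x) is finite iff Z(x) is finite} makes $D_{S^\bullet}(\det A)$ finite, leaving only finitely many such atoms (finitely many $a$, finitely many $i$). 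If an additive-type atom $I_n+aE_{ij}$ occurs in $A=B_1\cdots B_m$ at step $l_0$, I would single out the chain that stays in row $i$ for the first $l_0-1$ steps and in column $j$ thereafter: its contribution to $A_{ij}$ equals $a\,d$, where $d=\big(\prod_{l<l_0}(B_l)_{ii}\big)\big(\prod_{l>l_0}(B_l)_{jj}\big)$ is a partial product of the diagonals, so $d\mid_{S^\bullet}A_{ii}A_{jj}\mid_{S^\bullet}\det A$, i.e., $d\in D_{S^\bullet}(\det A)$. All the remaining chain-terms lie in $S$, so $A_{ij}=a\,d+s$ with $s\in S$, which says $a\,d\mid_S A_{ij}$ in $(S,+)$. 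Since $(S,+)$ is an FFM, Lemma~\ref{lem:A(x) is finite iff Z(x) is finite} makes $D_S(A_{ij})$ finite; because $S$ has no nonzero zero-divisors, the map $a\mapsto a\,d$ is injective for each fixed $d$, and there are only finitely many choices of $d\in D_{S^\bullet}(\det A)$ and of the pair $(i,j)$. Hence only finitely many additive-type atoms divide $A$ up to permutation, completing~(ii).

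The step I expect to be the main obstacle is the additive-type half of~(ii). In the $U_n(S)$ setting all diagonal entries of the cofactor matrices equal $1$, giving the clean divisibility $a\mid_S A_{ij}$; here the diagonal entries lie in $S^\bullet$ and need not be $\ge 1$, so that argument collapses, and one must instead carry along the ``multiplicative context'' $d$ of the atom and exploit that $d$ ranges over the finite set $D_{S^\bullet}(\det A)$, converting finiteness of $\{a\,d\}$ into finiteness of $\{a\}$ via the absence of zero-divisors. Making the chain-expansion bookkeeping precise — in particular, verifying that every non-special chain-term indeed lies in $S$ — is the only delicate point; the remainder parallels Theorem~\ref{thm:FFM characterizations for U_n(S)}.
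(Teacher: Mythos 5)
Your proposal is correct and follows essentially the same route as the paper: (b) $\Rightarrow$ (c) via the divisor-closed submonoids of Proposition~\ref{prop:algebraic observation of T_n(S)}, and (c) $\Rightarrow$ (a) by combining the length bound coming from Theorem~\ref{thm:BFM characterizations} with the finiteness of the set of atoms dividing a given matrix up to permutation, split into multiplicative type (controlled through $\det A$) and additive type (controlled through the entry $A_{ij}$), with Lemma~\ref{lem:A(x) is finite iff Z(x) is finite} supplying the needed finiteness of divisor sets. The only, immaterial, difference is the final counting step for additive-type atoms: the paper concludes from $ad \in D_S(A_{ij})$ (its $a' = C_{kk}D_{\ell\ell}$ is your $d$) that $a$ lies in the finite union $\bigcup_{d' \in D_S(A_{ij})} D_{S^\bullet}(d')$, whereas you note that $d$ ranges over the finite set $D_{S^\bullet}(\det A)$ and invoke multiplicative cancellation.
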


\begin{proof}
(a) $\Rightarrow$ (b): This is clear.
\smallskip
	
(b) $\Rightarrow$ (c): First, identify both $(S,+)$ and $(S^\bullet, \cdot)$ with divisor-closed submonoids of $T_n(S)^\bullet$ using part~2 and part~3 of Proposition~\ref{prop:algebraic observation of T_n(S)}, and then fix $x \in S$. Since $(S,+)$ is a divisor-closed submonoid of $T_n(S)^\bullet$, the equality $\mathcal{A}(S) = \mathcal{A}(T_n(S)^\bullet) \cap S$ holds. Consequently, we see that $\mathsf{Z}_S^*(x) = \mathsf{Z}_{T_n(S)^\bullet}^*(x)$. Therefore the fact that $T_n(S)^\bullet$ is an FFM implies that $(S,+)$ is an FFM as well. A similar argument shows that $(S^\bullet, \cdot)$ is an FFM.
\smallskip
	
(c) $\Rightarrow$ (a): Fix $n \in \nn_{\ge 2}$, and then take $B \in T_n(S)^\bullet$. As we did in the corresponding part of Theorem~\ref{thm:FFM characterizations for U_n(S)}, we shall verify that $B$ has only finitely many rigid factorizations in $T_n(S)^\bullet$ by arguing that there are only finitely many atoms of $T_n(S)^\bullet$ that are rigid divisors of $B$. To do so, take $A \in \mathcal{A}(T_n(S)^\bullet)$ such that $A \mid_p B$.
	
First, suppose that $A$ is an atom of multiplicative type. In this case, $A = I_n + (a-1)E_{ii}$ for some $i \in \ldb 1,n \rdb$ and $a \in \mathcal{A}(S^\bullet)$. As $A \mid_p B$, it follows that $a \mid_{S^\bullet} \det B$ and thus $a \in D_{S^\bullet}(\det B)$. Since $(S^\bullet, \cdot)$ is an FFM the set $\mathsf{Z}_{S^\bullet}(\det B)$ is finite, and since $(S^\bullet, \cdot)$ is a reduced monoid Lemma~\ref{lem:A(x) is finite iff Z(x) is finite} guarantees that $|A_{S^\bullet} (\det B)| < \infty$. Thus, letting $A_\times(B)$ denote the set of atoms of $T_n(S)^\bullet$ of multiplicative type that happen to be rigid divisors of $B$, we obtain
\begin{equation} \label{eq:multiplicative atoms FFM}
	|A_\times(B)| \le \big| \{(a,i) \in \mathcal{A}(S^\bullet) \times \ldb 1,n \rdb : a \mid_{S^\bullet} \det B \} \big|  =  \big| A_{S^\bullet}(\det B) \times \ldb 1,n \rdb \big| < \infty.
\end{equation}
	
Now suppose that $A$ is an atom of additive type. Take $a \in \mathcal{A}_+(S)$ and $k, \ell \in \ldb 1,n \rdb$ with $k < \ell$ such that $A = I_n + aE_{k \ell}$. Writing $B = CAD$ for some $C, D \in T_n(S)^\bullet$, one sees that $[CA]_{k \ell} = C_{kk}a + C_{k \ell}$ and so
\[
	B_{k \ell} = \sum_{j=1}^n [CA]_{kj} D_{j \ell} = C_{kk}D_{\ell \ell}a + C_{k \ell}D_{\ell \ell} + \sum_{j \in \ldb 1,n \rdb \setminus \{\ell\}} [CA]_{kj} D_{j \ell}.
\]
Then there exists $a' \in S^\bullet$ such that $aa' \mid_S B_{k \ell}$, i.e., $aa' \in D_S(B_{k \ell})$, where $D_S(B_{k \ell})$ denotes the set consisting of all divisors of $B_{k \ell}$ in $(S,+)$. Since $(S,+)$ is an FFM the set $\mathsf{Z}_{S}(B_{k \ell})$ is finite, and since $(S,+)$ is a reduced monoid it follows from Lemma~\ref{lem:A(x) is finite iff Z(x) is finite} that $D_S(B_{k \ell})$ is also finite. In addition, $aa' \in D_S(B_{k \ell})$ implies that
\begin{equation} \label{eq:FFM first equation}
	a \in \bigcup_{d \in D_S(B_{k \ell})} D_{S^\bullet}(d),
\end{equation}
Once again, since $(S^\bullet, \cdot)$ is a reduced FFM, Lemma~\ref{lem:A(x) is finite iff Z(x) is finite} implies that $D_{S^\bullet}(d)$ is a finite set for each $d \in D_S(B_{k \ell})$. Therefore letting $A_+(B)$ denote the set of all atoms of $\mathcal{A}(T_n(S)^\bullet)$ of additive type that are rigid divisors of $B$ in $T_n(S)^\bullet$, it follows from~(\ref{eq:FFM first equation}) that
\begin{equation} \label{eq:additive atoms FFM}
	|A_+(B)| \le \bigg{|} \bigcup_{1 \le i < j \le n} \ \bigcup_{d \in D_S(B_{i j})} D_{S^\bullet}(d) \bigg{|} < \infty.
\end{equation}
\smallskip
	
Putting together~(\ref{eq:multiplicative atoms FFM}) and~(\ref{eq:additive atoms FFM}), we see that $|A_\times(B) \cup A_+(B)| < \infty$. This, along with the fact that $T_n(S)^\bullet$ is a BFM (by Theorem~\ref{thm:BFM characterizations}), immediately implies that $T_n(S)^\bullet$ is indeed an FFM, which concludes the proof.
\end{proof}

\smallskip

From Theorem~\ref{thm:FFM characterizations} we deduce the following corollary.

\smallskip

\begin{cor} \label{cor:FFM implies BFM for T_n(S)}
	Let $S$ be a reduced information semialgebra. If $T_n(S)^\bullet$ is an FFM, then $T_n(S)^\bullet$ is a BFM.
\end{cor}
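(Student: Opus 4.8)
The plan is to deduce this exactly as in the passage from Theorem~\ref{thm:FFM characterizations for U_n(S)} to Corollary~\ref{cor:FFM implies BFM for U_n(S)}, using the transfer characterizations already established. First I would apply the implication (b)~$\Rightarrow$~(c) of Theorem~\ref{thm:FFM characterizations}: the hypothesis that $T_n(S)^\bullet$ is an FFM (for some $n \ge 2$) forces $S$ to be a bi-FFM, that is, both $(S,+)$ and $(S^\bullet,\cdot)$ are FFMs. Second, since every commutative cancellative FFM is a BFM --- this is the classical implication $\textbf{FFM} \Rightarrow \textbf{BFM}$ recorded in Diagram~(\ref{equation:anderson}), applicable to the commutative monoids $(S,+)$ and $(S^\bullet,\cdot)$ --- it follows that $S$ is a bi-BFM. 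Third, I would invoke the implication (c)~$\Rightarrow$~(a) of Theorem~\ref{thm:BFM characterizations} to conclude that $T_n(S)^\bullet$ is a BFM, which is the desired statement.

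A shorter alternative bypasses the transfer machinery altogether: by definition, an FFM $M$ satisfies $\mathsf{Z}^*(x) \neq \emptyset$ and $|\mathsf{Z}^*(x)| < \infty$ for every $x \in M$, so $\mathsf{L}(x) = \{\, |z| : z \in \mathsf{Z}^*(x) \,\}$ is a nonempty finite set for every $x$, whence $M$ is a BFM. This is precisely the observation ``each FFM is a BFM'' already recorded in Section~\ref{sec:prelim}, applied to $M = T_n(S)^\bullet$. I would likely state this one-line argument as the proof, while noting that it is also consistent with the transfer route above.

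I do not anticipate any genuine obstacle, since all the substantive work lies in Theorem~\ref{thm:FFM characterizations} and (for the commutative input of the first route) Theorem~\ref{thm:BFM characterizations}; the only point deserving a word of care is that $\textbf{FFM} \Rightarrow \textbf{BFM}$ is invoked for the two commutative component monoids of $S$, which is legitimate because $(S,+)$ and $(S^\bullet,\cdot)$ are commutative and cancellative. With this corollary in hand one extends Diagram~(\ref{diag:atomic diagram with BFM included}) by a further column on the left, yielding the full chain $\textbf{FFM} \Rightarrow \textbf{BFM} \Rightarrow \textbf{ACCP} \Rightarrow \textbf{atomic}$ for $T_n(S)^\bullet$ and thereby completing the proof of the Main Theorem.
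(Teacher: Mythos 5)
Your proposal is correct and matches the paper's route: the corollary is deduced from Theorem~\ref{thm:FFM characterizations} together with the fact that FFM implies BFM (recorded in Section~\ref{sec:prelim}) and Theorem~\ref{thm:BFM characterizations}, exactly as in your first paragraph; your one-line alternative is also valid and already implicit in the paper's preliminaries. No gaps.
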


\smallskip

In virtue of Theorem~\ref{thm:FFM characterizations} and Corollary~\ref{cor:FFM implies BFM for T_n(S)}, we can extend Diagram~(\ref{diag:atomic diagram with BFM included}). For each reduced information semialgebra $S$ and for every $n \in \nn_{\ge 2}$, each implication in the following diagram holds.
\begin{equation} \label{diag:full atomic diagram}
	\begin{tikzcd}
		S \text{ is \textbf{bi-FFM}} \arrow[r, Rightarrow] \arrow[d, Leftrightarrow] & S \text{ is \textbf{bi-BFM}} \arrow[r, Rightarrow] \arrow[d, Leftrightarrow] &  S \text{ is \textbf{bi-ACCP}} \arrow[r, Rightarrow] \arrow[d, Leftrightarrow] & S \text{ is \textbf{bi-atomic}} \arrow[d, Leftrightarrow] \\
		T_n(S)^\bullet \text{ is an \textbf{FFM}} \arrow[r, Rightarrow] & T_n(S)^\bullet \text{ is a \textbf{BFM}} \arrow[r, Rightarrow] & T_n(S)^\bullet \text{ is \textbf{ACCP}} \arrow[r, Rightarrow] & T_n(S)^\bullet \text{ is \textbf{atomic}}
	\end{tikzcd}
\end{equation}

\smallskip

The leftmost horizontal implications in Diagram~\eqref{diag:full atomic diagram} are not, in general, reversible. To verify this, we now exhibit a reduced information semialgebra $S$ such that, for every $n \in \nn_{\ge 2}$, the monoid $T_n(S)^\bullet$ is a BFM but not an FFM.

\smallskip

\begin{example}
Consider the Puiseux conductive information semialgebra $Q_2$. It is reduced since $1 = \inf Q_2^\bullet$. Clearly, $\ln Q_2^\bullet := \{\ln q : q \in Q_2^\bullet\}$ is an additive submonoid of $(\rr_{\ge 0},+)$. Since $0$ is not a limit point of either $Q_2^\bullet$ or $\ln Q_2^\bullet \setminus \{0\}$, it follows from \cite[Proposition~4.5]{fG19} that the additive monoids~$Q_2$ and $\ln Q_2^\bullet$ are BFMs. Thus, $Q_2$ is a bi-BFM. As a result, Theorem~\ref{thm:BFM characterizations for U_n(S)} and Theorem~\ref{thm:BFM characterizations} ensure that $U_n(Q_2)$ and $T_n(Q_2)^\bullet$ are BFMs for every $n \in \nn_{\ge 2}$. By Theorems~\ref{thm:FFM characterizations for U_n(S)} and~\ref{thm:FFM characterizations}, showing that none of the monoids $U_n(Q_2)$ and $T_n(Q_2)^\bullet$ (for $n \in \nn_2$) is an FFM amounts to verifying that the monoid $(Q_2,+)$ is not an FFM. By Proposition~\ref{prop:atomicity of elementary conductive PIA} the equalities $\mathcal{A}_+(Q_2) = (2,3) \cap \qq$ and $\mathcal{A}_\times(Q_2) = [2,4) \cap \qq$ hold. For each $x \in (4,5) \cap \qq$ the formal sum $(2 + 1/n) + (x - 2 - 1/n)$ is a length-$2$ factorization of $x$ in $(Q_2,+)$ for every integer $n > \frac{1}{x-4}$. Hence $|\mathsf{Z}_{Q_2}(x)| = \infty$, which implies that the monoid $(Q_2,+)$ is not an FFM. 
\end{example}

\medskip

\subsection{Half-Factoriality} We have seen that many important atomic factorization-theoretic properties transfer back and forth between a reduced information semialgebra $S$ and the noncommutative multiplicative monoids $U_n(S)$ and $T_n(S)$ (for every $n \in \nn_{\ge 2}$). However, there are arithmetic properties that fail to transfer from $S$ to $T_n(S)^\bullet$ (or $U_n(S)$) even when $S = \nn_0$. For instance, $(\mathbb \nn_0,+)$ and $(\mathbb \nn, \cdot)$ are both UFMs, and yet $T_n(\mathbb N_0)^\bullet$ has full infinite elasticity~\cite[Theorem~3.11]{BS20}. Continuing in this direction, we will see, as a consequence of Proposition~\ref{prop:T_n(S) is almost never an HFM}, that for every $n \in \nn_{\ge 2}$ the monoid $T_n(S)^\bullet$ is not \emph{half-factorial} even if both monoids $(S,+)$ and $(S^\bullet, \cdot)$ are.

An atomic monoid $M$ is called an \emph{HFM} (or a \emph{half-factorial monoid}) if for all $x \in M \setminus U(M)$ whenever $x = a_1 \dots a_m = b_1 \dots b_n$ for $m,n \in \nn$ and $a_1, \dots, a_m, b_1, \dots, b_n \in \mathcal{A}(M)$, the equality $m = n$ holds. The half-factorial property has been well-studied in the category of commutative monoids and, in that setting,  it is well-known that each of the implications in Diagram~(\ref{equation:anderson}) hold (and are not, in general, reversible).

However, as we now observe, the monoids $T_n(S)^\bullet$ are almost never HFMs and, as a result, we cannot extend the part of Diagram~\eqref{equation:anderson} involving HFMs from the commutative setting to the setting of upper triangular matrices over reduced information semialgebras.

\smallskip

\begin{prop} \label{prop:T_n(S) is almost never an HFM}
Let $S$ be a reduced information semialgebra. Then $T_n(S)^\bullet$ is not an HFM for any $n \ge 2$.
\end{prop}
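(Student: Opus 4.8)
The plan is to split into two cases according to whether $S$ is bi-atomic. If $S$ is not bi-atomic, then Theorem~\ref{thm:atomicity} shows that $T_n(S)^\bullet$ is not atomic, and since an HFM is by definition atomic, $T_n(S)^\bullet$ cannot be an HFM. From now on I would assume that $S$ is bi-atomic, and the goal becomes to exhibit a single matrix of $T_n(S)^\bullet$ possessing two factorizations into atoms of different lengths.

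The key preliminary step is the claim that there exist $a \in \mathcal{A}_+(S)$ and $p \in \mathcal{A}_\times(S^\bullet)$ with $pa \notin \mathcal{A}_+(S)$. I would argue this by contradiction: assume that $pa \in \mathcal{A}_+(S)$ for all $a \in \mathcal{A}_+(S)$ and all $p \in \mathcal{A}_\times(S^\bullet)$. Fix $a_0 \in \mathcal{A}_+(S)$ (which exists since $(S,+)$ is a nonzero atomic reduced monoid). For $u = 1$ we have $ua_0 = a_0 \in \mathcal{A}_+(S)$, and for a nonunit $u \in S^\bullet$ we may write $u = p_1 \cdots p_k$ with $p_1, \dots, p_k \in \mathcal{A}_\times(S^\bullet)$ (as $(S^\bullet,\cdot)$ is atomic) and iterate the assumption to get $ua_0 = p_1(\cdots(p_k a_0)\cdots) \in \mathcal{A}_+(S)$; thus $ua_0 \in \mathcal{A}_+(S)$ for every $u \in S^\bullet$. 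Now $1+1 \neq 0$ because $(S,+)$ is reduced and $1+1 \neq 1$ because $(S,+)$ is cancellative, so $1+1$ is a nonunit of the reduced monoid $(S^\bullet,\cdot)$; applying the previous observation with $u = 1+1$ gives $a_0 + a_0 = (1+1)a_0 \in \mathcal{A}_+(S)$, which is absurd since $a_0 + a_0$ is a sum of two nonzero elements of $S$. This contradiction proves the claim. I expect this claim to be the main obstacle: the naive attempt of taking $a = 1$ fails in general — for a cyclic Puiseux information semialgebra $S_r$ with $\mathsf{n}(r), \mathsf{d}(r) > 1$ one has $\mathcal{A}_+(S_r) = \{r^n : n \in \nn_0\}$ while $r \in \mathcal{A}_\times(S_r)$, so $r\cdot 1 = r$ is again an additive atom — and one really must use $1+1$ to force the product out of the set of additive atoms.

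With $a$ and $p$ as in the claim, I would set $M := I_n + (p-1)E_{11} + (pa)E_{12} \in T_n(S)^\bullet$, the matrix (regular, since its diagonal entries are nonzero) whose diagonal is $(p,1,\dots,1)$ and whose only nonzero off-diagonal entry is $pa$ in position $(1,2)$. Routine $2 \times 2$-block computations give the identities
\[
	\big(I_n + (p-1)E_{11}\big)\big(I_n + aE_{12}\big) = M = \big(I_n + a_1E_{12}\big)\cdots\big(I_n + a_kE_{12}\big)\big(I_n + (p-1)E_{11}\big),
\]
where $pa = a_1 + \cdots + a_k$ is any factorization of $pa$ into atoms of $(S,+)$. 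By the description of $\mathcal{A}(T_n(S)^\bullet)$ in \cite[Theorem~2.1]{BS20}, the matrices $I_n + (p-1)E_{11}$ (as $p \in \mathcal{A}_\times(S^\bullet)$), $I_n + aE_{12}$ (as $a \in \mathcal{A}_+(S)$), and each $I_n + a_iE_{12}$ are atoms of $T_n(S)^\bullet$; hence the left-hand factorization of $M$ has length $2$ and the right-hand one has length $k+1$. Since $pa \neq 0$ and $pa \notin \mathcal{A}_+(S)$, atomicity of $(S,+)$ forces $k \geq 2$, so $k + 1 \geq 3 > 2$. Therefore $M$ admits factorizations of two distinct lengths, and $T_n(S)^\bullet$ is not an HFM, as desired.
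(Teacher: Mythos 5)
Your proposal is correct, and it rests on the same underlying mechanism as the paper's proof --- a commutation identity in which a diagonal-type atom is pushed past unit-triangular atoms, changing the number of additive-type atoms needed on the other side --- but the key lemma and the witness matrix are genuinely different. The paper splits only on whether $(S,+)$ is atomic and, in the atomic case, proves that $1 \in \mathcal{A}_+(S)$ (if $1 = x+y$ with $x,y \in S^\bullet$ then $s = sx+sy$ for all $s$, forcing $(S,+)$ to be antimatter, impossible for an atomic non-group); it then uses the identity $\bigl(I_2 + (m-1)E_{22}\bigr)\bigl(I_2 + E_{12}\bigr)^m = \bigl(I_2 + E_{12}\bigr)\bigl(I_2 + (m-1)E_{22}\bigr)$ for $m \ge 2$, which yields lengths $\ell+m$ and $\ell+1$ for any $\ell \in \mathsf{L}\bigl(I_2 + (m-1)E_{22}\bigr)$; notably this does not require knowing any multiplicative atom of $S$. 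You instead split on bi-atomicity and prove the sharper-looking claim that some product $pa$ of a multiplicative atom and an additive atom fails to be an additive atom; your argument for that claim (factor the multiplicative nonunit $1+1$ into multiplicative atoms and iterate, landing on the absurdity $a_0 + a_0 \in \mathcal{A}_+(S)$) is correct, and your remark that $a=1$ does not naively work is well taken --- it is exactly the point the paper sidesteps by taking $m$ copies of the atom $I_2 + E_{12}$ rather than trying to make $m \cdot 1$ a non-atom times one atom. The trade-offs: your route needs atomicity of $(S^\bullet,\cdot)$ in the main case (harmless, since its failure is absorbed by Theorem~\ref{thm:atomicity} in your first case), and it produces an explicit pair of factorizations of lengths $2$ and $k+1 \ge 3$ with no dangling quantity $\ell$; the paper's route handles the non-bi-atomic but additively atomic situation inside its main computation (modulo the tacit observation that if the diagonal matrix had no factorization at all then $T_n(S)^\bullet$ would fail to be atomic, hence not an HFM). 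Both proofs are complete and correct.
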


\begin{proof}
It follows from part~3 of Proposition~\ref{prop:algebraic observation of T_n(S)} that when $(S,+)$ is not atomic, the monoid $T_n(S)^\bullet$ is not atomic and, in particular, $T_n(S)^\bullet$ is not an HFM.
	
Assume now that $(S,+)$ is atomic. In this case, we claim that $1 \in \mathcal A_+(S)$. Suppose for a contradiction that this is not the case. Writing $1 = x+y$ for some $x,y \in S^\bullet$, one observes that $s = s(x+y) = sx + sy \in S^\bullet + S^\bullet$ for all $s \in S^\bullet$. Thus, $(S,+)$ must be antimatter. However, only groups can be atomic and antimatter simultaneously, and $(S,+)$ is not a group. Then $1 \in \mathcal{A}_+(S)$. Now fix a divisor-closed submonoid $T_2$ of $T_n(S)^\bullet$ isomorphic to $T_2(S)^\bullet$. Since $1 \in \mathcal{A}_+(S)$, the matrix $\begin{psmallmatrix} 1 & 1 \\ 0 & 1\end{psmallmatrix}$ is an atom of $T_2$. Now for every $m \in \mathbb N_{\geq 2}\subseteq S$, the equality
\[
	A := \begin{pmatrix} 1 & m+1 \\ 0 & m \end{pmatrix} = \begin{pmatrix} 1 & 0 \\ 0 & m \end{pmatrix} \begin{pmatrix} 1 & 1 \\ 0 & 1 \end{pmatrix}^m = \begin{pmatrix} 1 & 1 \\ 0 & 1 \end{pmatrix} \begin{pmatrix} 1 & 0 \\ 0 & m \end{pmatrix}
\]
holds in $T_2$. Thus, for any $\ell \in \mathsf L_{T_2} \left(\begin{psmallmatrix} 1 & 0 \\ 0 & m \end{psmallmatrix}\right)$, one obtains that $\ell+1, \ell+m \in \mathsf{L}_{T_2}(A)$. This implies that~$T_2$ is not an HFM. Hence $T_n(S)^\bullet$ cannot be an HFM.
\end{proof}

\smallskip

We conclude with the following corollary. 

\smallskip

\begin{cor}
Let $S$ be a reduced information semialgebra. Then $T_n(S)^\bullet$ is not a rigid UFM for any $n \ge 2$.
\end{cor}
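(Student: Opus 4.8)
The plan is to deduce this immediately from Proposition~\ref{prop:T_n(S) is almost never an HFM}, via the elementary observation that \emph{every rigid UFM is an HFM}. First I would recall that a monoid $M$ is a rigid UFM exactly when $|\mathsf{Z}^*(x)| = 1$ for all $x \in M$; in particular $\mathsf{Z}^*(x) \neq \emptyset$ for every $x$, so a rigid UFM is automatically atomic.

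Next, suppose $M$ is a rigid UFM and that $x = a_1 \cdots a_m = b_1 \cdots b_n$ for some $m,n \in \nn$ and atoms $a_1, \dots, a_m, b_1, \dots, b_n \in \mathcal{A}(M)$. Taking $M = T_n(S)^\bullet$ with $S$ reduced, the monoid $M$ is reduced (its only unit is $I_n$, since every unit of $T_n(S)^\bullet$ is a diagonal matrix with unit entries and $S^\times = \{1\}$), so these data determine two elements $[(I_n, a_1 \cdots a_m)]_\sim$ and $[(I_n, b_1 \cdots b_n)]_\sim$ of $\mathsf{Z}^*(x)$. By uniqueness of rigid factorizations these two elements coincide, i.e. $(I_n, a_1 \cdots a_m) \sim (I_n, b_1 \cdots b_n)$; but the congruence $\sim$ was defined to force equal word-length, so $m = n$. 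Hence $M$ would be an HFM, contradicting Proposition~\ref{prop:T_n(S) is almost never an HFM}, which asserts that $T_n(S)^\bullet$ is not an HFM for any $n \ge 2$. Therefore $T_n(S)^\bullet$ is not a rigid UFM.

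There is essentially no obstacle here; the only point meriting a sentence is that rigid-factorization uniqueness forces factorization-length uniqueness, which is immediate from the clause ``$m = n$'' built into the definition of $\sim$. Alternatively, one can bypass the abstract argument entirely: in the non-atomic case $T_n(S)^\bullet$ is not atomic (by part~3 of Proposition~\ref{prop:algebraic observation of T_n(S)}) and so cannot be any kind of UFM, while in the atomic case the matrix $A$ produced in the proof of Proposition~\ref{prop:T_n(S) is almost never an HFM} satisfies $|\mathsf{L}_{T_2}(A)| \ge 2$, whence $|\mathsf{Z}^*(A)| \ge 2$.
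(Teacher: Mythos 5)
Your proposal is correct and matches the paper's (implicit) argument: the corollary is stated without proof precisely because a rigid UFM is automatically an HFM — the congruence $\sim$ defining $\mathsf{Z}^*(M)$ forces equal lengths, and $|\mathsf{Z}^*(x)|=1$ for all $x$ gives atomicity — so the conclusion follows at once from Proposition~\ref{prop:T_n(S) is almost never an HFM}. Your observations, including the direct alternative via the matrix $A$ with two distinct lengths, are accurate.
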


\bigskip

\end{document}